\documentclass[pdflatex]{sn-jnl}

\usepackage{float}
\usepackage{graphicx}%
\usepackage{multirow}%
\usepackage{amsmath,amssymb,amsfonts}%
\usepackage{amsthm}%
\usepackage[scr=rsfs]{mathalpha}%
\usepackage[title]{appendix}%
\usepackage{xcolor}%
\usepackage{textcomp}%

\usepackage{manyfoot}%
\usepackage{booktabs}%
\usepackage{algorithm}%
\usepackage{algorithmicx}%
\usepackage{algpseudocode}%
\usepackage{listings}%
\usepackage{tikz}
\usetikzlibrary{arrows.meta, positioning, calc}

\newtheorem{theorem}{Theorem}[section]
\newtheorem{lemma}[theorem]{Lemma}
\newtheorem{proposition}[theorem]{Proposition}
\newtheorem{corollary}[theorem]{Corollary}

\theoremstyle{definition}
\newtheorem{definition}[theorem]{Definition}

\newtheorem{remark}[theorem]{Remark}

\begin{document}

\title[A Tractable Pseudo-Metric on Non-Parametric Exponential Statistical Manifolds via SPD Geometry]{A Tractable Pseudo-Metric on Non-Parametric Exponential Statistical Manifolds via SPD Geometry}


\author*[1]{\fnm{Amit} \sur{Vishwakarma}}
\email{amitvishwakarma.22@res.iist.ac.in}

\author[1]{\fnm{K.S. Subrahamanian} \sur{Moosath}}
\email{smoosath@iist.ac.in}

\equalcont{These authors contributed equally to this work.}

\affil[1]{\orgdiv{} 
\orgname{Indian Institute of Space Science and Technology (IIST)}, 
\orgaddress{\street{Valiamala}, \city{Thiruvananthapuram}, \postcode{695547}, \state{Kerala}, \country{India}}}


\abstract{Computing distances between probability distributions on non-parametric statistical manifolds is fundamentally intractable. The geodesic
equations live in infinite-dimensional function spaces and admit no general closed-form solution. We develop a two-stage framework that produces a computable pseudo-metric on the Pistone--Sempi exponential manifold and apply it to two-sample hypothesis testing. In the first stage, an arbitrary distribution is projected onto a chosen finite-dimensional parametric exponential family via moment-matching. This projection is many-to-one, so the resulting object is a pseudo-metric rather than a true metric. In the second stage, the parametric family is embedded into the manifold of symmetric positive definite matrices via the expected outer product of the augmented sufficient statistics vector. The embedding is a smooth diffeomorphism. The induced pullback metric differs from the Fisher--Rao metric by an explicit correction involving third-order joint cumulants of the sufficient statistics; the correction vanishes for the Gaussian family, recovering the Calvo--Oller embedding as a special case. The affine-invariant Riemannian metric on the ambient matrix manifold then provides a closed-form lower bound for the pseudo-metric, computable directly from sample moments. Applied to two-sample testing, the framework produces a test statistic
that is affine-invariant and requires no continuous tuning parameters such as a bandwidth. The choice of target exponential family determines which moments are compared. Critical values are obtained by permutation.}

\keywords{Information geometry, Symmetric positive definite matrices, Affine-invariant Riemannian metric, Non-parametric exponential manifold, Two-sample hypothesis testing}



\maketitle

\section{Introduction}\label{sec1}
The measurement of distance between probability distributions on non-parametric statistical models is a fundamental problem in statistics, information theory, and machine learning. Classical divergence measures such as the Kullback-Leibler divergence \cite{r2}, total variation distance, and Hellinger distance serve as standard theoretical tools for quantifying discrepancies between probability distributions in non-parametric settings \cite{r59}. However, as scalar measures of discrepancy, they do not provide a computable distance on the infinite-dimensional space of distributions. This leaves the construction of a tractable distance on the non-parametric manifold as a computational challenge, which this paper addresses.

The field of information geometry, pioneered by Rao \cite{r1} and systematically developed by Chentsov \cite{r4}, Amari \cite{r5}, and others, provides 
a differential geometric framework for studying families of probability 
distributions. The Fisher--Rao metric endows the space of probability densities 
with a Riemannian structure, enabling the definition of geodesic distances that 
respect the statistical properties of the underlying distributions 
\cite{r7}. For parametric exponential families, the Fisher--Rao 
metric admits an explicit form as the Hessian of the log-partition function 
and the resulting geometry exhibits a dually flat structure with exponential 
and mixture connections \cite{r5}. Despite this rigorous foundation, computing Fisher--Rao distances is hard in general, as it requires solving a boundary value problem for geodesic equations on the statistical manifold. For non-parametric manifolds, this difficulty becomes fundamentally intractable, since the geodesic equations must be solved in infinite-dimensional function spaces.

The extension of information geometry to the infinite-dimensional setting was initiated by Pistone and Sempi~\cite{r11}. They constructed a manifold structure on the space of all probability densities equivalent to a reference measure, modeling the tangent space on an Orlicz space of exponentially integrable random variables. Pistone and Rogantin 
\cite{r12} subsequently developed the theory of mean parameters and orthogonality within this framework. Subsequent contributions 
by Cena \cite{r13}, Grasselli \cite{r16}, and Santacroce 
et al.\ \cite{r17} explored dual connections, geometric 
properties, and mixture models within these infinite-dimensional manifolds. 
While this body of work establishes a rigorous framework on non-parametric space, it does not address the construction of a computable distance 
structure that inherits the manifold's Riemannian geometry within the 
Pistone--Sempi framework.

Independently, the differential geometry of symmetric positive definite (SPD) 
matrices has emerged as a rich area with applications in diffusion tensor imaging, computer vision and machine learning \cite{r18, r19, r20,r24}. The space $\mathrm{SPD}(d)$ of $d \times d$ SPD matrices admits a natural Riemannian metric known as the affine-invariant Riemannian metric (AIRM), studied extensively by Bhatia \cite{r21} and Moakher \cite{r22}. The AIRM has a closed form expression and is invariant under congruence transformations. A seminal connection between information geometry and SPD manifolds was established by Calvo and Oller~\cite{r25}, who showed that the statistical manifold of $d$-dimensional multivariate Gaussian distributions embeds isometrically into $\mathrm{SPD}(d+1)$ via a map involving the mean vector and covariance matrix and they derived a lower bound for Fisher--Rao distance between Gaussians via the closed-form AIRM formula. Nielsen \cite{r48, r47} subsequently extended this by developing pullback cone distance on the Calvo--Oller embedded submanifold and providing an approximation method for Fisher--Rao distances between Gaussians. Recently, this embedding framework was extended to the statistical manifold of Gaussian Mixture Models (GMMs) and obtained a holistic similarity measure \cite{r60}. However, all of these constructions remain structurally restricted to the Gaussian family or its finite mixtures. To the best of our knowledge, such embedding frameworks for arbitrary exponential families or non-parametric distributions remain largely unexplored.

The problem of approximating infinite-dimensional statistical models by 
finite-dimensional families has been studied via information projections. 
Csisz\'{a}r~\cite{r30} established the foundational role 
of I-divergence in projection problems, proving that the I-projection onto 
convex families of distributions is characterized by moment-matching 
constraints. In the exponential family context, this leads to the $m$-projection, characterized by the preservation of expectations of 
sufficient statistics, whose geometric interpretation via dual connections 
was developed by Amari~\cite{r5, r6}. Brigo and Pistone~\cite{r56} applied this projection philosophy to approximate solutions of infinite-dimensional evolution equations, grounding their method in the non-parametric exponential statistical manifold structure of Pistone and Sempi. More recently, Cheng and 
Tong~\cite{r55} introduced an orthogonal tangent space decomposition to extract a finite-dimensional representative of the 
Fisher--Rao metric from the infinite-dimensional manifold, establishing 
local metric information and a covariate Cram\'er--Rao bound. Their 
framework operates locally at a fixed density and produces a local finite-dimensional 
metric tensor. Building towards a global perspective, the present paper leverages both projection theory and geometric embeddings to establish a computable, global pseudo-metric on the Pistone--Sempi non-parametric exponential manifold \cite{r11}.

\subsection{The framework and its application}
\label{sec:framework}
This paper develops a framework addressing this gap through a 
two-stage construction that induces a computable distance structure 
on the non-parametric statistical manifold. In the first stage, 
arbitrary non-parametric distributions, denoted by $q \in 
\mathcal{E}_p$ (the maximal exponential model centred at a 
reference measure $p$), are projected to a $d$-dimensional 
parametric exponential family $\mathcal{N} = \{p_\theta \mid \theta 
\in \Theta\}$ via the moment-matching $m$-projection $\pi^{(m)}$.  
This projection is many-to-one; that is, multiple distributions in 
$\mathcal{E}_p$ with identical sufficient statistic expectations map 
to the same parametric distribution $p_\theta \in \mathcal{N}$.

In the second stage, the parametric family $\mathcal{N}$ is embedded into $\mathrm{SPD}(d+1)$ 
via the map $\mathcal{F}\colon \mathcal{N} \to \mathrm{SPD}(d+1)$ defined by $\mathcal{F}(p_\theta) 
= \mathbb{E}_{p_\theta}[VV^\top]$, where $V$ is the $(d+1)$-dimensional augmented 
sufficient statistics vector. This embedding $\mathcal{F}$ is a smooth 
diffeomorphism onto a submanifold $\mathcal{M} = \mathcal{F}(\mathcal{N}) 
\subset \mathrm{SPD}(d+1)$ and pulling back the AIRM via $\mathcal{F}$ 
yields the Riemannian distance $d_\mathcal{F}$ on the space $\mathcal{N}$. This induced metric differs from the 
Fisher--Rao metric through an explicit correction involving third joint cumulants of the sufficient statistics. The AIRM on the 
ambient space provides a closed-form lower bound for the intrinsic 
distance on $\mathcal{M}$, with equality on fixed-mean parameter 
slices, which are shown to be totally geodesic submanifolds of 
$\mathrm{SPD}(d+1)$.

The metric structure is then transferred back to $\mathcal{E}_p$ through 
the projection $\pi^{(m)}$. The full construction is visualized in 
Figure~\ref{fig:framework1}. The resulting pseudo-metric on 
$\mathcal{E}_p$ is
\begin{equation}
\tilde{d}(q_1, q_2) := d_{\mathcal{F}}\!\bigl(\pi^{(m)}(q_1),\, 
\pi^{(m)}(q_2)\bigr),
\end{equation}
where $d_{\mathcal{F}}$ is the Riemannian distance on the space
$\mathcal{N}$ induced by pulling back the AIRM via $\mathcal{F}$. The composition of the two stages yields a closed-form lower bound for 
$\tilde{d}$ in terms of the AIRM distance between the corresponding SPD matrix embeddings, computable directly from sample moments; the 
precise inequality and its geometric characterization are established in Section~\ref{sec:complete_framework}.

\begin{figure}[!htbp]
\centering
\begin{tikzpicture}[
    node distance=2.4cm,
    every node/.style={font=\normalsize},
    space/.style={
        draw, rounded corners=4pt, minimum width=3.4cm,
        minimum height=0.95cm, font=\normalsize, thick
    },
    map/.style={->, thick, >=stealth},
    dmap/.style={->, dashed, thick, >=stealth, gray!60}
]

\node[space, fill=blue!6]  (Ep)  at (0, 6.8) {$\mathcal{E}_p$};
\node[space, fill=red!6]   (N)   at (0, 3.8) {$(\Theta,\, d_\mathcal{F}) \cong \mathcal{N}$};
\node[space, fill=green!6] (SPD) at (0, 0.8) {$\mathrm{SPD}(d+1)$};

\node[font=\small, text=gray, right=0.5cm of Ep]
    {$\infty$-dimensional};
\node[font=\small, text=gray, right=0.5cm of N]
    {$d$-dimensional};
\node[font=\small, text=gray, right=0.5cm of SPD]
    {$\tfrac{(d+1)(d+2)}{2}$-dimensional};

\node[font=\small, left=1.6cm of Ep,  align=right, text=blue!70!black]
    {$d_{\mathrm{Fisher}}$ \scriptsize(intractable)\\[3pt]
     $\tilde{d}$ \scriptsize(pseudo-metric)};
\node[font=\small, left=1.6cm of N,   text=red!70!black]   {$d_{\mathcal{F}}$};
\node[font=\small, left=1.6cm of SPD, text=green!50!black] {$d_{\mathrm{AIRM}}$};

\draw[map, blue!70!black, line width=1.2pt]
    ([xshift=-0.5cm]Ep.south) -- ([xshift=-0.5cm]N.north)
    node[midway, left=4pt] {$\pi^{(m)}$};

\draw[map, red!70!black, line width=1.2pt]
    ([xshift=-0.5cm]N.south) -- ([xshift=-0.5cm]SPD.north)
    node[midway, left=4pt] {$\mathcal{F}$};

\draw[dmap]
    ([xshift=0.5cm]SPD.north) -- ([xshift=0.5cm]N.south)
    node[midway, right=4pt, font=\small, text=black]
        {induces $d_\mathcal{F}$ on $\mathcal{N}$};

\draw[dmap]
    ([xshift=0.5cm]N.north) -- ([xshift=0.5cm]Ep.south)
    node[midway, right=4pt, font=\small, text=black]
        {induces $\tilde{d}$ on $\mathcal{E}_p$};

\node[font=\scriptsize, text=gray, below=0.18cm of SPD]
    {$\mathcal{M} = \mathcal{F}(\mathcal{N}) \subset \mathrm{SPD}(d+1)$};

\node[font=\scriptsize, text=gray, below=0.8cm of SPD, align=center]
    (legend)
    {\tikz[baseline=-0.6ex]
         \draw[->, thick, blue!70!black, >=stealth](0,0)--(0.55,0);
     \ forward map
     \qquad
     \tikz[baseline=-0.6ex]
         \draw[->, dashed, thick, gray!60, >=stealth](0,0)--(0.55,0);
     \ pullback};

\end{tikzpicture}
\caption{The two-stage framework. Solid arrows denote forward maps:
$\pi^{(m)}$ reduces the infinite-dimensional model to the parametric
family via moment matching, and $\mathcal{F}$ embeds the parametric family
$\mathcal{N}$ into $\mathrm{SPD}(d+1)$ via the expected outer product.
Dashed arrows denote pullbacks: the AIRM on $\mathrm{SPD}(d+1)$
induces the distance $d_\mathcal{F}$ on $\mathcal{N}$, which in turn induces the
pseudo-metric $\tilde{d}$ on $\mathcal{E}_p$.}
\label{fig:framework1}
\end{figure}

Within the exponential family $\mathcal{E}_p$, when the sufficient
statistics admit a polynomial dependence structure, the full embedding
$\mathcal{F}$ can be replaced by a partial embedding $\mathcal{F}_J$ that uses only
a subset $J$ of the sufficient statistics. Specifically, if each
omitted statistic can be expressed as a polynomial of degree at most
two in the retained ones, then $\mathcal{F}_J\colon\mathcal{N}\to\mathrm{SPD}(|J|+1)$
remains injective and the same metric structure is induced on a
lower-dimensional SPD manifold. In the case of the multivariate
normal distribution, the linear sufficient statistics alone satisfy
this condition, and the partial embedding recovers the classical
Calvo--Oller embedding~\cite{r25} as a special case. For the Gamma
distribution, no such polynomial relation exists between the sufficient
statistics $\log x$ and $x$, so the full $3\times 3$ embedding is
required; in this case the pullback metric differs from the Fisher--Rao
metric by an explicit third-order cumulant correction, quantified
numerically in Section~\ref{subsec:example_gamma}.

As an application, we use the framework to construct a
geometrically principled two-sample test. Given independent samples
from unknown distributions $q_1, q_2 \in \mathcal{E}_p$, one
estimates the $m$-projection of each distribution via sample moments,
forms the corresponding SPD embedding matrices, and uses the AIRM
distance as the test statistic. The test is affine-invariant by
construction, since the AIRM is invariant under congruence
transformations.

We compare the test against two standard methods. Hotelling's $T^2$ is affine-invariant and requires no tuning, but
detects only mean differences and has no power against alternatives
that differ solely in covariance structure. Maximum mean discrepancy (MMD) with a characteristic kernel can detect arbitrary distributional differences, but it requires selecting a kernel and a bandwidth $\sigma$. The AIRM test also requires a choice, the family $\mathcal{N}$, which determines which moments are compared; this is a direct, interpretable 
decision about which features of the distributions are relevant, rather 
than a continuous numerical parameter. Once
$\mathcal{N}$ is fixed, no bandwidth or kernel enters the statistic and affine invariance holds exactly. A detailed power comparison appears in Section~\ref{subsec:hypothesis_testing}.
\subsection{Contributions}
\begin{enumerate}
\item The map $\mathcal{F}\colon \mathcal{N} \to \mathrm{SPD}(d+1)$ is a
smooth diffeomorphism onto its image, and the induced pullback metric differs from the Fisher--Rao metric by a correction term involving the third joint cumulants of the sufficient statistics.

\item The inequality $d_{\mathrm{AIRM}} \leq d_{\mathcal{F}}$ is
established, with equality when the AIRM geodesic lies entirely
within $\mathcal{M}.$

\item A polynomial criterion identifies when a partial embedding
$\mathcal{F}_J$ remains injective; the fixed-mean slices of
$\mathcal{F}_J$ are totally geodesic submanifolds of $\mathrm{SPD}(|J|+1)$, giving equality in the distance bound on these slices. For the multivariate Gaussian this recovers the
Calvo--Oller embedding~\cite{r25} as the canonical special case, while for families such as the Gamma distribution the full
embedding is required.

\item Composing $\mathcal{F}$ with the $m$-projection $\pi^{(m)}$ induces
a pseudo-metric $\tilde{d}$ on $\mathcal{E}_p$, vanishing
precisely when two distributions share the same sufficient
statistic expectations.

\item The framework gives rise to an affine-invariant two-sample test,
using the AIRM distance between empirical SPD embedding matrices as
the test statistic, with exact level $\alpha \in (0,1)$ via
permutation under exchangeability, where $\alpha$ is the prescribed
significance level.

\end{enumerate}

\subsection{Organization}
\label{subsec:organization}
 
Section~\ref{sec:related} reviews related work.
Section~\ref{sec:preliminaries} establishes mathematical
preliminaries: the Pistone--Sempi framework, Orlicz spaces, the
maximal exponential model, and SPD matrix geometry.
Section~\ref{sec:projection} develops the $m$-projection from
infinite-dimensional to finite-dimensional models and establishes
the induced pseudo-metric on $\mathcal{E}_p$.
Section~\ref{sec:spd_embedding} defines the SPD embedding,
establishes the diffeomorphism property, and derives the pullback
metric and its relation to the Fisher--Rao metric.
Section~\ref{sec:partial_embedding} develops partial embeddings
and the polynomial degree criterion for dimension reduction.
Section~\ref{sec:complete_framework} synthesizes the complete distance framework and establishes the main distance inequality. Section~\ref{sec:examples} provides worked examples for the Gaussian and Gamma families and applies the framework to geometric two-sample hypothesis testing.

\section{Related Work}\label{sec:related} The problem of defining computable distances between probability distributions 
has been approached from several distinct mathematical frameworks. We organize 
the existing literature into geometric and functional-analytic approaches on the space of distributions and identify where the 
present work departs from each line of research.

\subsection{Distances via Hilbert space embeddings}

The kernel mean embedding framework~\cite{r34}
maps each distribution $P$ to an element
$\mu_P = \mathbb{E}_{X \sim P}[k(X, \cdot)]$ of a reproducing kernel
Hilbert space associated with a positive definite
kernel $k$, and defines the MMD as
$\mathrm{MMD}(P,Q) = \|\mu_P - \mu_Q\|_{\mathcal{H}}$. With a
characteristic kernel, the MMD is a true metric on the space of
probability measures~\cite{r34}, and admits
consistent two-sample tests whose critical values are obtained via
permutation~\cite{r35}. The AIRM two-sample test
developed in Section~\ref{subsec:hypothesis_testing} adopts this same permutation mechanism to obtain critical values.

Both MMD and the present work embed distributions into a geometric
space and measure the distance between their images. However, with a
characteristic kernel, the MMD embedding targets an
infinite-dimensional Hilbert space with no Riemannian structure,
and 
requires the selection of a kernel and bandwidth parameter, both of which affect the resulting distance in ways that lack a direct geometric interpretation in terms of distribution moments. In contrast, the SPD embedding developed here targets a finite-dimensional Riemannian
manifold where the induced metric differs from the Fisher--Rao metric through an explicit third-order moment correction (Theorem~\ref{thm:induced_metric}). Ultimately, both approaches require a structural choice. MMD requires 
specifying a kernel and bandwidth, while the present framework requires 
choosing a target exponential family $\mathcal{N}$. This choice of 
$\mathcal{N}$ directly determines which moments of the distributions are 
being compared. For example, choosing $\mathcal{N}$ to be the Gaussian 
family encodes mean and variance, while adding higher-order sufficient statistics captures skewness or kurtosis.

\subsection{Non-parametric Fisher geometry via the square-root embedding}
 
The map $f \mapsto \sqrt{f}$ embeds the space of probability densities
into the positive part of the unit sphere in $L^2(\mu)$, and the
Fisher--Rao geometry on the space of densities is isometric to the
standard spherical geometry on this sphere~\cite{r51, r50}. This
equivalence provides closed-form expressions for geodesics and exponential
maps, which Srivastava et al.~\cite{r51} exploited for shape analysis and
PDF comparison in computer vision, and Holbrook et al.~\cite{r50} applied
to Bayesian nonparametric density estimation via a $\chi^2$-process prior.
 
The framework gives exact Fisher--Rao geodesics, but computing the
geodesic distance between two arbitrary densities requires evaluating the
integral $\int\!\sqrt{q_1 q_2}\,d\mu$, which has no closed form for
general non-parametric densities and presents significant numerical
challenges. The framework of Srivastava et al.~\cite{r51} operates on
densities given in explicit functional form and does not directly address
distance computation from finite samples. The present construction takes a
different approach; by projecting to a finite-dimensional exponential
family via moment matching and embedding into $\mathrm{SPD}(d+1)$, a
closed-form lower bound on the pseudo-metric is obtained directly from
sample moments, at the cost of identifying distributions with identical
low-order moments.

\subsection{Tangent space decomposition for non-parametric information
geometry}
 
Cheng and Tong~\cite{r55} recently introduced an orthogonal decomposition
of the tangent space $T_f\mathcal{M}_\mu = \mathcal{S} \oplus
\mathcal{S}^\perp$, where $\mathcal{M}_\mu$ denotes the general space of probability densities (formally defined in Section~\ref{sec:preliminaries}) and $\mathcal{S}$ is a finite-dimensional covariate
subspace, and defined the covariate Fisher information matrix (cFIM) $G_f$
as a finite-dimensional representative of the restricted metric. They
established a trace theorem $H_G(f) = \mathrm{tr}(G_f)$ providing an
information-geometric foundation for the $G$-entropy, and derived a
covariate Cram\'er--Rao lower bound connecting $G_f$ to the curvature of
the KL-divergence. While both approaches extract tractable
finite-dimensional geometry from infinite-dimensional manifolds, the cFIM
provides a local metric tensor at a fixed density suited for
semi-parametric efficiency analysis. In contrast, the present paper
constructs a global distance function on $\mathcal{E}_p$ via composed
projections, yielding a tractable global pseudo-metric rather than a local
metric tensor.

\subsection{Projection-based dimensionality reduction on statistical
manifolds}
 
Brigo and Pistone~\cite{r56} studied the projection of infinite-dimensional measure-valued evolution equations, specifically the Fokker--Planck and Kushner--Stratonovich equations, onto finite-dimensional exponential and mixture families within the Pistone--Sempi framework. They
established that the exponential manifold projection is consistent with the Fisher--Rao metric and, when the sufficient statistics are chosen as eigenfunctions of the backward diffusion operator, the projection recovers the maximum likelihood estimator for the Fokker--Planck equation. Their work applies $m$-projection to approximate solutions of partial differential equations rather than to construct a distance structure between distributions.

The approaches discussed above address distinct aspects of the problem of
measuring distances between probability distributions, but none constructs
a global pseudo-metric on the infinite-dimensional Pistone--Sempi
exponential manifold that is computable from finite sample moments and
within the geometry of $\mathcal{E}_p$ itself. Hilbert space embeddings produce computable metrics but operate outside the
exponential family geometry and require either kernel selection or
high-dimensional optimization. Closed-form Fisher--Rao constructions are
restricted to the Gaussian family. The square-root embedding gives exact
Fisher--Rao geodesics on the space of densities, but evaluating the
geodesic distance requires computing $\int\!\sqrt{q_1 q_2}\,d\mu$, which
has no closed form for general non-parametric distributions. The tangent
space decomposition provides local metric information at a fixed density
but not a global distance function. Projection-based methods operate on
the same manifold but target dynamical approximation rather than distance
computation.
 
The framework developed in this paper addresses this gap by working within
the Pistone--Sempi manifold structure~\cite{r11}. The $m$-projection
$\pi^{(m)}$ maps $\mathcal{E}_p$ to a finite-dimensional parametric family
$\mathcal{N}$, and composing with the SPD embedding $\mathcal{F}$ induces a
pseudo-metric $\tilde{d}$ on $\mathcal{E}_p$. The pullback metric $g^\mathcal{F}$
(Definition~\ref{def:pullback_metric}) is a Riemannian 
metric on $\mathcal{N}$ that differs from the Fisher--Rao
metric by an explicit third-order moment correction
(Theorem~\ref{thm:induced_metric}). It coincides with Fisher--Rao when the third joint cumulants of the sufficient statistics vanish, a condition satisfied by the Gaussian family. The AIRM distance provides a closed-form lower bound for
$\tilde{d}$ computable directly from sample moments
(Theorem~\ref{thm:distance_bound}), without requiring densities in closed
form or solving an optimization problem. The choice of $\mathcal{N}$
determines which moment features are compared; beyond this, no continuous
tuning parameter is needed.

\section{Preliminaries}
\label{sec:preliminaries}

This section establishes the mathematical framework necessary for constructing the SPD embedding of parametric exponential families 
and the induced pseudo-metric on $\mathcal{E}_p$. The exposition follows Pistone and 
Sempi~\cite{r11} and Pistone and Rogantin~\cite{r12}, 
beginning with measure-theoretic foundations, developing the theory of Orlicz 
spaces for exponential integrability, introducing the non-parametric maximal 
exponential model, and concluding with the geometric structures central to the 
analysis.

\subsection{The Underlying Probability Space}
\label{subsec:prob_space}

The construction of an infinite-dimensional manifold structure on the 
space of probability distributions requires a measure-theoretic setting 
in which all densities are mutually absolutely continuous. This is 
achieved by working relative to a fixed reference measure.

Throughout this paper, $(X, \mathcal{X}, \mu)$ denotes a fixed 
$\sigma$-finite measure space, where $X$ is the sample space, 
$\mathcal{X}$ is a $\sigma$-algebra of measurable subsets of $X$, and 
$\mu$ is a $\sigma$-finite reference measure on $(X, \mathcal{X})$.

\begin{definition}
\label{def:density_space}
The space of probability densities with respect to $\mu$ is defined as
\begin{equation}
\mathcal{M}_\mu := \left\{ p : X \to \mathbb{R} \;\middle|\; 
p \text{ is } \mathcal{X}\text{-measurable},\; p > 0\ \mu\text{-a.e.},\; 
\int_X p\, d\mu = 1 \right\}
\end{equation}
\end{definition}

Each $p \in \mathcal{M}_\mu$ defines a probability measure
$P = p \cdot \mu$ via $P(A) = \int_A p\, d\mu$ for
$A \in \mathcal{X}$. The strict positivity condition ensures
absolute continuity of all measures in $\mathcal{M}_\mu$ with
respect to $\mu$, so that for any $p, q \in \mathcal{M}_\mu$ the
log-likelihood ratio $\log(q/p)$ is well-defined $\mu$-almost
everywhere. For $p \in \mathcal{M}_\mu$ and a random variable
$u : X \to \mathbb{R}$, the expectation under $p \cdot \mu$ is
written
\begin{equation}
\mathbb{E}_p[u] := \int_X u(x)\, p(x)\, d\mu(x).
\end{equation}

\subsection{Exponential Convergence Topology}
\label{subsec:exp_convergence}

The space $\mathcal{M}_\mu$ carries the {e-convergence} topology
introduced by Pistone and Sempi~\cite{r11}. A sequence
$(p_n)_{n\in\mathbb{N}}$ in $\mathcal{M}_\mu$ is said to be
{exponentially convergent} to $p \in \mathcal{M}_\mu$ if
$p_n \to p$ in $\mu$-probability and both ratio sequences
$(p_n/p)_{n\in\mathbb{N}}$ and $(p/p_n)_{n\in\mathbb{N}}$ are
eventually bounded in $L^a(p\cdot\mu)$ for every $a > 1$, that is,
\begin{equation}
\forall\, a > 1\colon\quad
\limsup_{n\to\infty}\,\mathbb{E}_p\!\left[\left(\frac{p_n}{p}\right)^{\!a}
\right] < +\infty
\quad\text{and}\quad
\limsup_{n\to\infty}\,\mathbb{E}_p\!\left[\left(\frac{p}{p_n}\right)^{\!a}
\right] < +\infty.
\end{equation}
We refer to Pistone and Sempi~\cite{r11} for further
properties of this topology.

\subsection{Orlicz Spaces and the Cram\'er Class}
\label{subsec:orlicz_cramer}

The statistical manifold modelled on $\mathcal{M}_\mu$ requires,
at each density $p$, a Banach space that serves as the local
coordinate domain. This space is defined via the Cram\'er class,
whose integrability condition is naturally captured by Orlicz space
theory. The relevant spaces were introduced by Pistone and
Sempi~\cite{r11} following the general theory of
Krasnosel'ski\u{\i} and Ruticki\u{\i}~\cite{r14}
and Rao and Ren~\cite{r15}.

A {Young function} is a convex, even function
$\Phi : \mathbb{R} \to [0,\infty)$ that is strictly increasing
on $[0,\infty)$ satisfying $\Phi(0) = 0$
and
\begin{equation}
  \lim_{t \to +\infty} \frac{\Phi(t)}{t} = +\infty.
\end{equation}
It admits the integral representation
\begin{equation}
  \Phi(x) = \int_0^{|x|} \varphi(t)\, dt,
\end{equation}
where $\varphi : [0,\infty) \to [0,\infty)$ is non-decreasing and
left-continuous with $\varphi(0) = 0$. The {conjugate Young function} is $\Psi(y) = \int_0^{|y|} \chi(t)\, dt$, where $\chi$ is the generalized inverse of $\varphi$. The {Young inequality} $|xy| \leq \Phi(x) + \Psi(y)$ holds for all $x, y \in \mathbb{R}$, with equality if and only if $y = \varphi(|x|)$ or $x = \chi(|y|)$.

For the measure space $(X, \mathcal{X}, \mu)$, the {Orlicz space}
$L^\Phi(\mu)$ is defined as the collection of all $\mu$-equivalence
classes of measurable functions $u : X \to \mathbb{R}$ for which there
exists $\alpha > 0$ such that $E_\mu[\Phi(\alpha u)] < +\infty$,
that is, 
\begin{equation}
  L^\Phi(\mu) := \bigl\{ u \text{ measurable} : \exists\,\alpha > 0,\;
  \textstyle\int_X \Phi(\alpha u)\, d\mu < +\infty \bigr\}.
\end{equation}
It is a Banach space under the {Luxemburg norm}
\begin{equation}
  \|u\|_\Phi := \inf\!\left\{ k > 0 :
  \int_X \Phi\!\left(\frac{u}{k}\right) d\mu \leq 1 \right\}.
\end{equation}

Three Young functions play central roles in the non-parametric geometry
of exponential families~\cite{r12}:
\begin{align}
  \Phi_1(x) &:= \cosh(|x|) - 1, \label{eq:Phi1} \\
  \Phi_2(x) &:= e^{|x|} - |x| - 1, \label{eq:Phi2} \\
  \Phi_3(x) &:= (1 + |x|)\log(1 + |x|) - |x|. \label{eq:Phi3}
\end{align}
The functions $\Phi_1$ and $\Phi_2$ are equivalent, meaning the norms
they induce are equivalent and the corresponding Banach spaces
coincide; the functions $\Phi_2$ and $\Phi_3$ are
conjugate~\cite{r12}. The function $\Phi_1$
governs the integrability condition for the Cram\'er class and
defines the local model space of the statistical manifold.

For each density $p \in \mathcal{M}_\mu$, the moment generating
function of a measurable function $u$ on $(X,\mathcal{X})$ with
respect to the probability measure $p \cdot \mu$ is
\begin{equation}
  \hat{u}_p(t) := \mathbb{E}_p\!\left[e^{tu}\right]
  = \int_X e^{tu}\, p\, d\mu, \qquad t \in \mathbb{R}.
\end{equation}

\begin{definition}[Cram\'er Class]
\label{def:cramer_class}
For $p \in \mathcal{M}_\mu$, the {Cram\'er class at $p$} is
the set of all measurable functions $u$ on $(X, \mathcal{X})$ whose
moment generating function $\hat{u}_p$ is finite in a neighbourhood
of the origin, i.e.\
\begin{equation}
  0 \in \operatorname{dom}(\hat{u}_p)^{\circ}.
\end{equation}
The {centred Cram\'er class at $p$} additionally requires
$\mathbb{E}_p[u] = 0$.
\end{definition}

\begin{proposition}[Cram\'er Class as Orlicz Space]
\label{prop:cramer_orlicz}
For each $p \in \mathcal{M}_\mu$, the Cram\'er class at $p$ coincides
with the Orlicz space $L^{\Phi_1}(p \cdot \mu)$. In particular it is a
Banach space, and the centred Cram\'er class is a closed subspace.
\end{proposition}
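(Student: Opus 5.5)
The plan is to prove the two inclusions between the Cram\'er class and $L^{\Phi_1}(p\cdot\mu)$ directly from the definitions, using only the identity $\Phi_1(\alpha u) = \cosh(\alpha u) - 1 = \tfrac12\bigl(e^{\alpha u}+e^{-\alpha u}\bigr) - 1$. The Banach property then comes from the general Orlicz space theory recalled above, and closedness of the centred class from continuity of the expectation functional.

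\emph{First inclusion.} Let $u$ be in the Cram\'er class at $p$, so that $\hat u_p(t)<\infty$ for $|t|<\varepsilon$ for some $\varepsilon>0$. Fix $\alpha\in(0,\varepsilon)$. Then
\[
\mathbb{E}_p[\Phi_1(\alpha u)] = \tfrac12\bigl(\hat u_p(\alpha)+\hat u_p(-\alpha)\bigr) - 1 < \infty .
\]
Hence $u \in L^{\Phi_1}(p\cdot\mu)$, with the Orlicz space taken with respect to the probability measure $p\cdot\mu$.

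\emph{Second inclusion.} Suppose $\mathbb{E}_p[\cosh(\alpha u)]<\infty$ for some $\alpha>0$. For $|t|\le\alpha$ we have $e^{tu}\le e^{|t||u|}\le e^{\alpha|u|}\le 2\cosh(\alpha u)$, so $\hat u_p(t)\le 2\,\mathbb{E}_p[\cosh(\alpha u)]<\infty$ on $[-\alpha,\alpha]$. Therefore $0$ lies in the interior of $\operatorname{dom}\hat u_p$. The point worth stating explicitly is that the Cram\'er condition is genuinely two-sided. This is exactly why the even function $\cosh$ appears, and why $\Phi_2$, which is equivalent to $\Phi_1$, also works via $e^{|x|}$.

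\emph{Banach space structure.} I will invoke the standard fact, recalled from~\cite{r14,r15}, that $L^\Phi$ with the Luxemburg norm is a Banach space for any Young function. The only check needed is that $\Phi_1$ is a Young function: it is convex, even, strictly increasing on $[0,\infty)$, vanishes at $0$, and satisfies $\Phi_1(t)/t\to\infty$.

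\emph{Closedness of the centred class.} Since $p\cdot\mu$ is a probability measure, $\Phi_1(x)\ge x^2/2$, so $L^{\Phi_1}(p\cdot\mu)\hookrightarrow L^2(p\cdot\mu)\hookrightarrow L^1(p\cdot\mu)$ continuously. To make this quantitative, I will show $\|u\|_{L^1}\le C\|u\|_{\Phi_1}$, either from the Orlicz H\"older inequality $\mathbb{E}_p|u\cdot 1|\le 2\|u\|_{\Phi_1}\|1\|_{\Psi_1}$, or directly: if $\mathbb{E}_p[\Phi_1(u/k)]\le1$, then Jensen gives $\Phi_1(\mathbb{E}_p|u|/k)\le1$, hence $\mathbb{E}_p|u|\le k\,\Phi_1^{-1}(1)$. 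Consequently $u\mapsto\mathbb{E}_p[u]$ is a bounded linear functional on $L^{\Phi_1}(p\cdot\mu)$, and its kernel, the centred Cram\'er class, is closed.

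The main obstacle is mild: it is ensuring that the Cram\'er class is defined on equivalence classes modulo $p\cdot\mu$-null sets. Because $p>0$ $\mu$-a.e., these coincide with $\mu$-null sets, so the identification of the two spaces as sets of equivalence classes is consistent.
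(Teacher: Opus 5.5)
Your proof is correct, but it takes a different route from the paper. The paper gives no argument of its own: it defers to Pistone and Sempi for the identification of the Cram\'er class with $L^{\Phi_1}(p\cdot\mu)$, and to Pistone and Rogantin for the Banach structure and the norm formula.

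You verify both inclusions directly. In one direction you use $\mathbb{E}_p[\Phi_1(\alpha u)]=\tfrac12\bigl(\hat u_p(\alpha)+\hat u_p(-\alpha)\bigr)-1$. In the other you use the domination $e^{tu}\le e^{\alpha|u|}\le 2\cosh(\alpha u)$ for $|t|\le\alpha$. After that you cite only generic Orlicz-space completeness, which needs nothing beyond checking that $\Phi_1$ is a Young function.

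Your treatment of the last clause is the most informative part. The Jensen bound $\mathbb{E}_p|u|\le \Phi_1^{-1}(1)\,\|u\|_{\Phi_1}$ is valid because $p\cdot\mu$ is a probability measure. It makes $u\mapsto\mathbb{E}_p[u]$ a bounded linear functional, so the centred class is closed as its kernel. The paper leaves this point entirely to the references.

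Each route has its advantage. The citation route buys brevity and access to finer facts used immediately afterwards, such as the unit-ball characterization $\|u\|_p\le 1 \iff \mathbb{E}_p[\cosh u]\le 2$; your argument does not address that, but this statement does not need it. Your self-contained version makes explicit that the two-sided finiteness of the moment generating function is exactly what the even Young function $\Phi_1$ encodes.
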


\begin{proof}
  See Pistone and Sempi~\cite{r11}. The Banach space structure and the norm
  formula are given explicitly in Pistone and
  Rogantin~\cite{r12}.
\end{proof}

\begin{proposition}
\label{prop:cramer_norm}
The centred Cram\'er class, denoted
\begin{equation}
  B_p := \left\{ u \in L^{\Phi_1}(p \cdot \mu) :
  \mathbb{E}_p[u] = 0 \right\},
  \label{eq:Bp_def}
\end{equation}
is a Banach space under the Luxemburg norm
\begin{equation}
  \|u\|_p := \inf\!\left\{ r > 0 :
  \mathbb{E}_p\!\left[\cosh\!\left(\frac{u}{r}\right) - 1\right]
  \leq 1 \right\},
  \label{eq:lux_norm}
\end{equation}
and $\|u\|_p \leq 1$ if and only if
 $\mathbb{E}_p\!\left[\cosh(u)\right] \leq 2$.
\end{proposition}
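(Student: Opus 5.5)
The plan is to obtain all three claims of Proposition~\ref{prop:cramer_norm} from Proposition~\ref{prop:cramer_orlicz} and the basic properties of the Luxemburg norm. First, $B_p$ is a Banach space under $\|\cdot\|_p$. By Proposition~\ref{prop:cramer_orlicz}, $L^{\Phi_1}(p\cdot\mu)$ is a Banach space under the Luxemburg norm built from $\Phi_1(x)=\cosh(|x|)-1$. Since $\cosh$ is even, $\Phi_1(u/r)=\cosh(u/r)-1$, so that norm is exactly the expression defining $\|u\|_p$. It remains to check that $B_p$ is closed in $L^{\Phi_1}(p\cdot\mu)$; a closed subspace of a Banach space is itself Banach.

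For closedness, I would show that $u\mapsto \mathbb{E}_p[u]$ is a bounded linear functional on $L^{\Phi_1}(p\cdot\mu)$. Then $B_p$ is its kernel and hence closed. Take $k>\|u\|_p$, so that $\mathbb{E}_p[\cosh(u/k)-1]\le 1$ by convexity and monotonicity in $k$. The elementary inequality $|x|\le \cosh x - 1 + 1$, which follows from $\cosh x\ge 1+x^2/2\ge |x|$, gives
\[
\mathbb{E}_p\bigl[|u|/k\bigr]\le 2 .
\]
Hence $|\mathbb{E}_p[u]|\le 2k$, and letting $k\downarrow\|u\|_p$ yields $|\mathbb{E}_p[u]|\le 2\|u\|_p$. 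So convergence in $\|\cdot\|_p$ implies convergence of means, and $B_p$ is closed. (Equivalently, this uses the continuous embedding $L^{\Phi_1}\hookrightarrow L^1$ for a probability measure.)

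For the characterization $\|u\|_p\le 1 \iff \mathbb{E}_p[\cosh u]\le 2$, note that $\mathbb{E}_p[\cosh u]\le 2$ is the same as $\mathbb{E}_p[\Phi_1(u/1)]\le 1$. If this holds, then $r=1$ is admissible in the infimum, so $\|u\|_p\le 1$.

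The converse is the only delicate step. Suppose $\|u\|_p\le 1$. Then for every $r>1$ we have $\mathbb{E}_p[\cosh(u/r)-1]\le 1$, because the admissible set is an up-interval: $\Phi_1(u/r)$ is nonincreasing in $r$. Let $r\downarrow 1$. Since $\Phi_1(u/r)\uparrow\Phi_1(u)$ pointwise, the monotone convergence theorem gives $\mathbb{E}_p[\Phi_1(u)]\le 1$, that is, $\mathbb{E}_p[\cosh u]\le 2$. The main care point is exactly this: the infimum is attained, so one must use monotone rather than dominated convergence, and there is no a priori integrability of $\cosh u$ to invoke.
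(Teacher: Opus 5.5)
Your proof is correct and complete, and it takes a different route from the paper only in that the paper gives no argument here. The Banach structure of $L^{\Phi_1}(p\cdot\mu)$ and the closedness of the centred class are simply asserted in Proposition~\ref{prop:cramer_orlicz}, whose proof is a citation to Pistone--Sempi~\cite{r11} and Pistone--Rogantin~\cite{r12}. The unit-ball characterization $\|u\|_p\le 1 \iff \mathbb{E}_p[\cosh u]\le 2$ is likewise stated without proof, as part of the cited norm formula. Your version imports only the completeness of $L^{\Phi_1}$ and supplies the rest by hand.

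First, the estimate $|\mathbb{E}_p[u]|\le 2\|u\|_p$, obtained from $|x|\le\cosh x$, makes the mean a bounded linear functional, so $B_p$ is closed as its kernel. This is strictly redundant given Proposition~\ref{prop:cramer_orlicz}, but it makes the embedding $L^{\Phi_1}(p\cdot\mu)\hookrightarrow L^1(p\cdot\mu)$ explicit rather than implicit. Second, your monotone-convergence step shows that the Luxemburg infimum is attained, which is exactly what the nontrivial direction of the equivalence needs. Fatou's lemma would serve equally well there.

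The citation buys the paper brevity, while your argument makes the proposition self-contained modulo completeness. Two cosmetic points. The up-interval property of admissible radii needs only monotonicity of $\cosh$ on $[0,\infty)$, not convexity. Also, attainment of the infimum is the conclusion of your last step, not its premise, so that sentence should be phrased accordingly.
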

Since every $u \in B_p$ satisfies $\mathbb{E}_p[u] = 0$, the
bilinear form
\begin{equation}
  \langle u, v \rangle_p
  := \mathbb{E}_p[uv]
  = \mathrm{Cov}_p(u, v),
  \qquad u, v \in B_p,
  \label{eq:inner_product_bp}
\end{equation}
is well defined on $B_p$ and defines a continuous scalar product
(Pistone and Rogantin~\cite{r12},
Definition~10). The topology induced by the Luxemburg norm
$\|\cdot\|_p$ on $B_p$ is strictly stronger than the $L^2(p\cdot\mu)$
topology induced by $\langle\cdot,\cdot\rangle_p$; the latter will be
identified in Subsection~\ref{subsec:tangent_fisher} as the Fisher
information metric on the manifold.

\subsection{The Statistical Manifold Structure}
\label{subsec:manifold_structure}

The centred Cram\'er class $B_p$ serves as the local model space
for the manifold structure on $\mathcal{M}_\mu$. The key property
enabling this is that the open unit ball of $B_p$,
\begin{equation}
  \mathcal{W}_p := \{ u \in B_p : \|u\|_p < 1 \},
  \label{eq:Wp_def}
\end{equation}
is contained in the interior of the effective domain of the cumulant
generating functional introduced in Definition~\ref{def:cumulant_functional}
below, which in turn ensures that the exponential patch map
$e_p$ of Definition~\ref{def:exponential_map_chart} is well defined on $\mathcal{W}_p$.

\begin{definition}[Cumulant Generating Functional]
\label{def:cumulant_functional}
For $p \in \mathcal{M}_\mu$, the {cumulant generating functional}
is the map $K_p : B_p \to [0, +\infty]$ defined by
\begin{equation}
  K_p(u) := \log \mathbb{E}_p\!\left[e^u\right]
  = \log\!\int_X e^u\, p\, d\mu.
  \label{eq:Kp_def}
\end{equation}
Its effective domain is
\begin{equation}
  \mathrm{Dom}(K_p) := \bigl\{ u \in B_p : K_p(u) < +\infty \bigr\}.
\end{equation}
The open unit ball $\mathcal{W}_p$ satisfies
$\mathcal{W}_p \subset \mathrm{Dom}(K_p)^{\circ}$
(Pistone and Rogantin~\cite{r12}).
\end{definition}

\begin{definition}[Exponential Map and Chart]
\label{def:exponential_map_chart}
For each $p \in \mathcal{M}_\mu$, the exponential map is given by
\begin{equation}
e_p : \mathcal{W}_p \to \mathcal{M}_\mu, \qquad 
u \mapsto e^{u - K_p(u)} \cdot p.
\end{equation}
The map $e_p$ is injective; its range $U_p$ has the inverse
\begin{equation}
s_p : U_p \to \mathcal{W}_p, \qquad 
q \mapsto \log(q/p) - \mathbb{E}_p[\log(q/p)].
\end{equation}
Each pair $(U_p, s_p)$ is a chart, with $s_p$ the centred log-likelihood coordinate. The log-likelihood ratio $\log(q/p)$ is well-defined $\mu$-almost everywhere by the absolute continuity established in Subsection~\ref{subsec:prob_space}.
\end{definition}
For $p_1, p_2 \in \mathcal{M}_\mu$ with 
$U_{p_1} \cap U_{p_2} \neq \emptyset$, the transition map is 
given by
\begin{equation}
(s_{p_2} \circ e_{p_1})(u) = u + \log(p_1/p_2) - 
\mathbb{E}_{p_2}\!\left[\log(p_1/p_2) + u\right],
\label{eq:transition_map}
\end{equation}
whose derivative $B_{p_1} \ni u \mapsto u - \mathbb{E}_{p_2}[u] 
\in B_{p_2}$ is a topological linear isomorphism~\cite{r58}. 

\begin{theorem}[Affine Atlas; Pistone and 
Sempi~\cite{r11}]
\label{thm:affine_atlas}
The collection $\{(U_p, s_p) : p \in \mathcal{M}_\mu\}$ is an 
affine $C^\infty$-atlas on $\mathcal{M}_\mu$. The induced 
topology on sequences coincides with the e-convergence topology 
of Subsection~\ref{subsec:exp_convergence}, and transition maps 
are given by~\eqref{eq:transition_map}.
\end{theorem}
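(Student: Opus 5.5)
The plan is to verify the three defining properties of an affine $C^\infty$-atlas and then identify the topology. First, the charts must cover $\mathcal{M}_\mu$. This is immediate, since $p = e_p(0) \in U_p$. Second, each $s_p$ must be a bijection onto an open subset of the model space $B_p$. By Definition~\ref{def:exponential_map_chart}, $e_p$ is injective on $\mathcal{W}_p$ with inverse $s_p$. The image $\mathcal{W}_p$ is the open unit ball of the Banach space $B_p$ from Proposition~\ref{prop:cramer_norm}, and it lies in $\mathrm{Dom}(K_p)^\circ$ by Definition~\ref{def:cumulant_functional}. So $K_p$ is finite there and $e_p$ is well defined. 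The model spaces $B_p$ differ from point to point, so I will also check that they are mutually isomorphic as topological vector spaces whenever the chart domains meet.

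The core step is the compatibility of the charts. Take $q \in U_{p_1}\cap U_{p_2}$ and write $q = e^{u-K_{p_1}(u)}p_1$. Then $\log(q/p_2) = u - K_{p_1}(u) + \log(p_1/p_2)$. Centring under $p_2$ removes the constant $K_{p_1}(u)$, and this gives exactly \eqref{eq:transition_map}. The resulting map is an affine map $u \mapsto Au + c$ with linear part $Au = u - \mathbb{E}_{p_2}[u]$ and constant $c = \log(p_1/p_2) - \mathbb{E}_{p_2}[\log(p_1/p_2)]$. An affine map with continuous linear part is $C^\infty$, so the atlas is affine, provided two facts hold. The first is that $A$ is a topological isomorphism $B_{p_1}\to B_{p_2}$; the paper cites this from~\cite{r58}. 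The second is that $c\in B_{p_2}$ and that $s_{p_1}(U_{p_1}\cap U_{p_2})$ is open in $B_{p_1}$.

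The main obstacle is the equivalence of the Orlicz spaces: whenever $U_{p_1}\cap U_{p_2}\neq\emptyset$, we need $L^{\Phi_1}(p_1\cdot\mu) = L^{\Phi_1}(p_2\cdot\mu)$ with equivalent norms. The route I would try first goes through the intermediate point $q$. Since $q = e^{v-K_{p}(v)}p$ with $\|v\|_p<1$, the ratios $q/p$ and $p/q$ lie in $L^{1+\varepsilon}(p)$ for some $\varepsilon>0$, because $e^{\pm(1+\varepsilon) v}$ remains integrable for small $\varepsilon$. Hölder's inequality then gives
\[
\mathbb{E}_q[\cosh(tu)] = \mathbb{E}_p\!\left[\cosh(tu)\,\tfrac{q}{p}\right] \le \mathbb{E}_p[\cosh(tu)^{b}]^{1/b}\,\Bigl\|\tfrac{q}{p}\Bigr\|_{L^{1+\varepsilon}(p)},
\]
where $b$ is the Hölder conjugate of $1+\varepsilon$. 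Since $\cosh(tu)^b \le \cosh(btu)$, this shows that $u \in L^{\Phi_1}(p)$ implies $u \in L^{\Phi_1}(q)$, with a norm bound. The same argument applied to $p/q$ gives the converse, and chaining $p_1 \to q \to p_2$ gives the required equivalence. The same estimate shows that $\log(p_1/p_2) = s_{p_1}(q)$-type differences of elements of $\mathcal{W}$ lie in the common space, so $c\in B_{p_2}$. Openness of the overlap follows because $U_{p_1}\cap U_{p_2}$ pulls back to $\{u\in\mathcal{W}_{p_1} : Au+c\in\mathcal{W}_{p_2}\}$, which is the preimage of an open set under a continuous map.

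Finally, for the topology, I would show that convergence in a chart, $\|s_p(p_n)-s_p(p)\|_p\to0$, is equivalent to e-convergence. In the forward direction, chart convergence makes $u_n\to u$ in $L^{\Phi_1}$. This gives uniform bounds on $\mathbb{E}_p[e^{a|u_n|}]$ for every $a$, after shrinking to a small ball around $u$ and using continuity of $K_p$ on the open domain. These bounds yield the $L^a$ bounds on $p_n/p$ and $p/p_n$ required in Subsection~\ref{subsec:exp_convergence}, and convergence in probability follows as well. The converse runs the same estimates backwards using $\log(p_n/p)$. This part is standard, and I would largely defer to~\cite{r11} for it.
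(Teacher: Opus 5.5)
The paper gives no proof of its own. The theorem is quoted from Pistone and Sempi~\cite{r11}, and the fact that $u\mapsto u-\mathbb{E}_{p_2}[u]$ is a topological isomorphism $B_{p_1}\to B_{p_2}$ is simply taken from~\cite{r58}. Your proposal is therefore a more self-contained route, and its atlas part is sound. You read the transition map off $q=e_{p_1}(u)$. You reduce compatibility to $L^{\Phi_1}(p_1\cdot\mu)=L^{\Phi_1}(p_2\cdot\mu)$ with equivalent norms. You then prove this through an intermediate point $q\in U_{p_1}\cap U_{p_2}$ using H\"older's inequality and $\cosh(x)^b\le\cosh(bx)$. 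This is the standard argument. It also makes explicit two things the paper leaves implicit: the translation part lies in $B_{p_2}$, which follows from $\log(p_1/p_2)=s_{p_2}(q)-s_{p_1}(q)+K_{p_1}(s_{p_1}(q))-K_{p_2}(s_{p_2}(q))$, and overlaps are open in chart coordinates. One small slip: for the reverse inclusion the ratio you actually need is $p/q\in L^{1+\varepsilon}(q\cdot\mu)$, i.e.\ $\mathbb{E}_p[(p/q)^{\varepsilon}]<\infty$, not $L^{1+\varepsilon}(p\cdot\mu)$. This also holds, so the slip is harmless.

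The citation does real work in the topological claim, and there your sketch needs two repairs.

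In the forward direction, uniform bounds on $\mathbb{E}_p[e^{a|u_n|}]$ for every $a>1$ are available only when the chart is centred at the limit, so that $u_n\to 0$. With another centre, $\mathbb{E}_p[e^{a|u|}]$ can be infinite for large $a$ even for the limit coordinate $u$ itself. Centring at the limit is legitimate because $U_p$ is an open neighbourhood of $p$, but you should say so explicitly.

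In the converse direction, the estimates cannot simply be run backwards. Boundedness of $(p_n/p)$ and $(p/p_n)$ in every $L^a(p\cdot\mu)$ only bounds $\mathbb{E}_p[\cosh(\alpha\, s_p(p_n))]$; it does not give $\|s_p(p_n)\|_p\to 0$. The missing ingredient is the convergence in probability. Combined with the uniform integrability that those bounds supply (Vitali), it yields, for every $\alpha>0$, $\mathbb{E}_p[(p_n/p)^{\alpha}]\to 1$, $\mathbb{E}_p[(p/p_n)^{\alpha}]\to 1$ and $\mathbb{E}_p[\log(p_n/p)]\to 0$. Hence $\mathbb{E}_p[\cosh(\alpha\, s_p(p_n))]\to 1$ for every $\alpha$, so $\|s_p(p_n)\|_p\to 0$, and in particular $p_n\in U_p$ eventually.
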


\begin{definition}[Exponential Statistical Manifold]
\label{def:exp_stat_manifold}
The {exponential statistical manifold} is the manifold 
defined by the atlas of Theorem~\ref{thm:affine_atlas} on 
$\mathcal{M}_\mu$.
\end{definition}

\subsection{The Maximal Exponential Model}
\label{subsec:maximal_model}

The exponential statistical manifold $\mathcal{M}_\mu$ is not 
connected. Its connected components are disjoint open subsets, 
each carrying a self-contained manifold structure. The natural 
domain for the present framework is the connected component 
containing a fixed reference density $p$, called the maximal 
exponential model.

\begin{definition}[Maximal Exponential Model]
\label{def:maximal_exponential}
For $p \in \mathcal{M}_\mu$, the {maximal exponential model at $p$}
is
\begin{equation}
  \mathcal{E}_p := \bigl\{ e^{u - K_p(u)} \cdot p :
  u \in \mathrm{Dom}(K_p)^{\circ},\;
  \mathbb{E}_p[u] = 0 \bigr\}.
  \label{eq:Ep_def}
\end{equation}
The centering condition $\mathbb{E}_p[u] = 0$ is imposed without 
loss of generality. Setting $\tilde{u} := u - \mathbb{E}_p[u]$, 
one computes $K_p(\tilde{u}) = K_p(u) - \mathbb{E}_p[u]$, 
so that $\tilde{u} - K_p(\tilde{u}) = u - K_p(u)$ and the 
density $e^{u - K_p(u)} \cdot p$ is unchanged.
\end{definition}

\begin{theorem}[Connected Component; Pistone and
Sempi~\cite{r11}, Pistone and
Rogantin~\cite{r12}]
\label{thm:connected_component}
$\mathcal{E}_p$ is the connected component of $\mathcal{M}_\mu$
containing $p$.
\end{theorem}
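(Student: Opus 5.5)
To show that $\mathcal{E}_p$ is the connected component of $\mathcal{M}_\mu$ containing $p$, I will prove two things: $\mathcal{E}_p$ is open and connected in the topology of Theorem~\ref{thm:affine_atlas}, and the relation $q \in \mathcal{E}_p$ defines an equivalence relation on $\mathcal{M}_\mu$ whose classes are all open. Since distinct equivalence classes are disjoint, each class is then also closed, being the complement of the union of the others. A connected, open and closed set containing $p$ is exactly the component of $p$.

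\textbf{Step 1: connectedness.} The set $\mathrm{Dom}(K_p)^\circ \cap B_p$ is convex, because $K_p$ is convex by H\"older's inequality; its interior is therefore convex as well. Hence $\mathcal{E}_p$ is the image of a convex, and so path-connected, set under $u \mapsto e^{u-K_p(u)}p$. This map is continuous into the e-topology, since locally it is expressed through charts whose transition maps~\eqref{eq:transition_map} are affine. So $\mathcal{E}_p$ is path-connected.

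\textbf{Step 2: equivalence relation.} Here I use the characterization $q \in \mathcal{E}_p$ if and only if $\log(q/p)$ lies in $L^{\Phi_1}(p)$ and $q/p \in L^{1+\varepsilon}(p)$ and $p/q \in L^{1+\varepsilon}(q)$ for some $\varepsilon>0$, as in Pistone--Rogantin. I will derive the needed direction directly. If $q = e^{u-K_p(u)}p$ with $u$ interior to $\mathrm{Dom}(K_p)$, then $(1+\varepsilon)u$ is still in the domain, giving $\mathbb{E}_p[(q/p)^{1+\varepsilon}]<\infty$. Also $-\varepsilon u$ lies in the domain for small $\varepsilon$ because $0$ is interior, which gives $\mathbb{E}_q[(p/q)^{1+\varepsilon}] = \mathbb{E}_p[(p/q)^{\varepsilon}]<\infty$.
\begin{itemize}
\item \emph{Symmetry.} The roles of $p$ and $q$ in these conditions are symmetric, and $\log(p/q)$ lies in the Cram\'er class at $q$ because its moment generating function under $q$ is $\mathbb{E}_p[(q/p)^{1-t}]$, which is finite for $t$ near $0$.
\item \emph{Transitivity.} If $q \in \mathcal{E}_p$ and $r \in \mathcal{E}_q$, write $\log(r/p) = \log(r/q) + \log(q/p)$. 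Applying H\"older's inequality under $p$ to the exponentials of these two summands, with exponents slightly above $1$, shows that $t\log(r/p)$ has finite moment generating function near $t=0$. The same estimate applied to $t$ slightly beyond $1$ and slightly below $0$ places the centred version in the interior of $\mathrm{Dom}(K_p)$.
\end{itemize}

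\textbf{Step 3: openness and conclusion.} Openness of each class is immediate. For $q \in \mathcal{E}_p$, the chart domain $U_q = e_q(\mathcal{W}_q)$ is an open neighbourhood of $q$ by Theorem~\ref{thm:affine_atlas}. Since $\mathcal{W}_q \subset \mathrm{Dom}(K_q)^\circ$, we have $U_q \subset \mathcal{E}_q$, and $\mathcal{E}_q = \mathcal{E}_p$ by Step 2. Combining this with Step 1 and the opening observation completes the argument.

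\textbf{Main obstacle.} I expect transitivity in Step 2 to be the hardest part. The difficulty is making the interior condition quantitative: openness of the domain of $K_p$ at the point $u$ must be turned into integrability of $(q/p)^{1\pm\varepsilon}$, and then propagated through H\"older's inequality with exponents chosen so that all the exponents involved stay inside the integrability ranges. I would first try exponents of the form $a = 1+\delta$ and $b = a/(a-1)$, with $\delta$ chosen small enough for the relevant moments of $q/p$ and $r/q$ to be finite.
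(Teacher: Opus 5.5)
The paper gives no argument of its own here; it cites Pistone and Sempi and Pistone and Rogantin. Your architecture is the standard one: an equivalence relation whose classes are the sets $\mathcal{E}_q$, openness of each class via $U_q\subset\mathcal{E}_q$, connectedness of each class, and hence each class is clopen. Step~3 and the closing topological argument are fine.

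\textbf{Step~2 proves the wrong direction.} You say you will derive ``the needed direction'', but what you derive is membership $\Rightarrow$ integrability: $q\in\mathcal{E}_p$ gives $q/p\in L^{1+\varepsilon}(p)$ and $p/q\in L^{1+\varepsilon}(q)$. That is the right \emph{input}. The \emph{conclusions} of symmetry and transitivity, however, are memberships, $p\in\mathcal{E}_q$ and $r\in\mathcal{E}_p$. These require the centred log-ratio to lie in the norm interior of $\mathrm{Dom}(K_q)$, resp.\ $\mathrm{Dom}(K_p)$. Knowing that $\log(p/q)$ is in the Cram\'er class at $q$ only puts its centring in $B_q$. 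The missing lemma is: if $v\in B_q$ and $\mathbb{E}_q[e^{(1+\varepsilon)v}]<\infty$, then $v\in\mathrm{Dom}(K_q)^\circ$. For $w\in B_q$ with $\|w\|_q<\varepsilon/(1+\varepsilon)$,
\[
\mathbb{E}_q\bigl[e^{v+w}\bigr]\le\mathbb{E}_q\bigl[e^{(1+\varepsilon)v}\bigr]^{\frac{1}{1+\varepsilon}}\,\mathbb{E}_q\bigl[e^{\frac{1+\varepsilon}{\varepsilon}w}\bigr]^{\frac{\varepsilon}{1+\varepsilon}}.
\]
The last factor is at most $4^{\varepsilon/(1+\varepsilon)}$, because $\|\tfrac{1+\varepsilon}{\varepsilon}w\|_q<1$ gives $\mathbb{E}_q[\cosh(\tfrac{1+\varepsilon}{\varepsilon}w)]\le 2$ and $e^x\le 2\cosh x$. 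With this lemma, your observation that $\mathbb{E}_q[(p/q)^t]=\mathbb{E}_p[(q/p)^{1-t}]$ is finite for $t$ in an open interval containing $[0,1]$ does yield symmetry.

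\textbf{Transitivity fails on the side $t>1$ as you describe it.} H\"older under $p$ cannot close there. Integrability of $(q/p)^{1+\delta}$ under $p$ forces its exponent below $(1+\varepsilon)/(1+\delta)$, so the conjugate exponent on $(r/q)^{1+\delta}$ is large. Under $p$ you control only small powers of $r/q$, via $\mathbb{E}_p[(r/q)^s]=\mathbb{E}_q[(p/q)(r/q)^s]$. Change measure first. Take $\varepsilon$ common to both memberships, fix $a\in(1,1+\varepsilon)$ and $b=a/(a-1)$, and choose $\delta>0$ with $(1+\delta)a\le 1+\varepsilon$ and $\delta b\le\varepsilon$. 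Then
\[
\mathbb{E}_p\bigl[(r/p)^{1+\delta}\bigr]=\mathbb{E}_q\bigl[(r/q)^{1+\delta}(q/p)^{\delta}\bigr]\le\mathbb{E}_q\bigl[(r/q)^{(1+\delta)a}\bigr]^{1/a}\,\mathbb{E}_p\bigl[(q/p)^{1+\delta b}\bigr]^{1/b}<\infty .
\]
The side $t<0$ is similar. Split $(p/r)^{\delta}=(p/q)^{\delta}(q/r)^{\delta}$ by H\"older under $p$. Then treat $\mathbb{E}_p[(q/r)^s]=\mathbb{E}_q[(p/q)(q/r)^s]$ by H\"older under $q$, using $p/q\in L^{1+\varepsilon}(q)$ and $\mathbb{E}_q[(q/r)^{\varepsilon}]<\infty$. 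The lemma then puts the centred $\log(r/p)$ in $\mathrm{Dom}(K_p)^\circ$.

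\textbf{Step~1 continuity is not justified.} The affine form of the transition maps does not by itself give continuity of $u\mapsto e^{u-K_p(u)}p$ at points with $\|u\|_p\ge 1$. That would need the bounded inclusion $L^{\Phi_1}(p)\hookrightarrow L^{\Phi_1}(q)$, again by H\"older with $q/p\in L^{1+\varepsilon}(p)$. It is simpler to prove only what you use. Set $q_{t_0}:=e^{t_0u-K_p(t_0u)}p$. The chart $s_{q_{t_0}}$ sends the arc $t\mapsto e^{tu-K_p(tu)}p$ to the line $(t-t_0)\bigl(u-\mathbb{E}_{q_{t_0}}[u]\bigr)$. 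Moreover $u\in L^{\Phi_1}(q_{t_0})$, because $\mathbb{E}_{q_{t_0}}[e^{su}]=e^{K_p((t_0+s)u)-K_p(t_0u)}$ is finite for small $|s|$. Hence each such arc is continuous, and $\mathcal{E}_p$ is path-connected.
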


Within $\mathcal{E}_p$, finite-dimensional parametric exponential
families arise as submodels generated by a finite collection of
sufficient statistics. Given $u_1, \ldots, u_d \in B_p$, the
associated parametric family is
\begin{equation}
  \mathcal{N} = \left\{ e^{\,\sum_{j=1}^d \theta_j u_j - \psi(\theta)}
  \cdot p : \theta \in \Theta \right\},
  \qquad
  \psi(\theta) = K_p\!\left(\sum_{j=1}^d \theta_j u_j\right),
  \label{eq:param_model}
\end{equation}
where
\begin{equation}
  \Theta = \left\{ \theta \in \mathbb{R}^d :
  \sum_{j=1}^d \theta_j u_j \in \mathrm{Dom}(K_p)^{\circ} \right\}
\end{equation}
is an open subset of $\mathbb{R}^d$, by continuity of
$\theta \mapsto \sum_j \theta_j u_j$ and openness of
$\mathrm{Dom}(K_p)^{\circ}$. The {mean parameters} are obtained
by differentiating $\psi$ under the integral sign
(Pistone and Rogantin~\cite{r12})
\begin{equation}
  \eta_j(\theta) := \frac{\partial \psi}{\partial \theta_j}(\theta)
  = \mathbb{E}_{p_\theta}[u_j],
  \qquad j = 1, \ldots, d.
  \label{eq:mean_params}
\end{equation}
In vector form, $\eta(\theta) = \nabla\psi(\theta) \in \mathbb{R}^d$.
The vector $\eta(\theta) = (\eta_1(\theta),\ldots,\eta_d(\theta))^\top$
is called the {mixture coordinates}~\cite{r5} or the
{mean parameters}~\cite{r10}. These will
serve as the primary coordinates for the $m$-projection developed in Section~\ref{sec:projection} and the 
embedding $\mathcal{F}$ developed in Section~\ref{sec:spd_embedding}.

The suitability of $\mathcal{E}_p$ as the domain for the present
framework follows from three properties. First, it is the connected
component of $\mathcal{M}_\mu$ containing $p$ (Theorem~\ref{thm:connected_component}),
and the collection of charts $\{(U_p, s_p) : p \in \mathcal{M}_\mu\}$
forms an {affine} $C^\infty$-atlas on $\mathcal{M}_\mu$
(Pistone and Sempi~\cite{r11},
Pistone and Rogantin~\cite{r12}),
meaning the transition maps
\begin{equation}
  s_{p_2} \circ e_{p_1}(u) = u + \log\!\frac{p_1}{p_2}
  - \mathbb{E}_{p_2}\!\left[u + \log\frac{p_1}{p_2}\right]
  \label{eq:transition_map_Ep} 
\end{equation}
are affine functions of $u$. Second, for every $q \in \mathcal{E}_p$,
the centred log-likelihood $s_p(q) = \log(q/p) - \mathbb{E}_p[\log(q/p)]$
belongs to $B_p$. Third, any finite-dimensional parametric exponential
family generated from $p$ embeds naturally as a submanifold of
$\mathcal{E}_p$ (Subsection~\ref{subsubsec:parametric_submanifolds}).

\subsection{Tangent Spaces and the Fisher Information Metric}
\label{subsec:tangent_fisher}

The manifold structure established in Subsection~\ref{subsec:manifold_structure}
determines the tangent space at each density and the canonical
Riemannian metric on it. Since $\mathcal{M}_\mu$ is modelled on the
Banach spaces $B_p$, the tangent space at $p$ is identified with $B_p$
itself, and the natural inner product on $B_p$ introduced in
equation~\eqref{eq:inner_product_bp} becomes the Fisher--Rao metric.

\begin{proposition}[Tangent Space; Pistone and
Rogantin~\cite{r12}]
\label{prop:tangent_space}
Let $p(t)$ be a regular curve in $\mathcal{M}_\mu$ with $p(t_0) = p$,
and let $u(t) \in B_q$ be its representation by the chart $s_q$
centred at $q \in \mathcal{M}_\mu$, where
$t \in \{t : u(t) \in \mathrm{Dom}(K_q)^{\circ}\}$.
The tangent vector $\dot{u}(t_0)$ relates to the score function by
\begin{equation}
  \dot{u}(t_0) - \mathbb{E}_q[\dot{u}(t_0)]
  = \frac{d}{dt}\log\!\left(\frac{p(t)}{p}\right)\bigg|_{t=t_0}.
  \label{eq:tangent_score}
\end{equation}
In particular, when the chart is centred at $p$ itself
(i.e.\ $q = p$), every $u(t) \in B_p$ satisfies
$\mathbb{E}_p[\dot{u}(t_0)] = 0$, so~\eqref{eq:tangent_score}
simplifies to
\begin{equation}
  \dot{u}(t_0) = \frac{d}{dt}\log\!\left(\frac{p(t)}{p}
  \right)\bigg|_{t=t_0},
  \label{eq:tangent_score_centered}
\end{equation}
and the tangent space $T_p\mathcal{M}_\mu$ is identified with $B_p$.
\end{proposition}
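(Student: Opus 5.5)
The argument is a direct computation in the chart $s_q$, using the explicit form of the exponential patch map $e_q$ from Definition~\ref{def:exponential_map_chart}.

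First, since $p(t)$ is regular and its representative $u(t)=s_q(p(t))$ stays in $\mathrm{Dom}(K_q)^{\circ}$, we can write
\begin{equation*}
p(t) = e^{u(t)-K_q(u(t))}\, q .
\end{equation*}
Taking logarithms and subtracting the value at $t_0$ gives
\begin{equation*}
\log\frac{p(t)}{p} = u(t)-u(t_0) - \bigl(K_q(u(t))-K_q(u(t_0))\bigr),
\end{equation*}
because $p=p(t_0)$.

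Next, differentiate at $t_0$. By the chain rule,
\begin{equation*}
\frac{d}{dt}\log\frac{p(t)}{p}\bigg|_{t=t_0} = \dot u(t_0) - DK_q(u(t_0))[\dot u(t_0)].
\end{equation*}
We need the derivative of the cumulant generating functional at an interior point of its domain. We use the standard fact from Pistone and Rogantin~\cite{r12} that $K_q$ is Fr\'echet differentiable (indeed analytic) on $\mathrm{Dom}(K_q)^{\circ}$, with
\begin{equation*}
DK_q(v)[h]=\mathbb{E}_{e_q(v)}[h].
\end{equation*}
The justification is differentiation under the integral sign: $h$ lies in the Orlicz space $L^{\Phi_1}$, and $v$ is interior, so $e^{v+\epsilon h}$ remains integrable for small $\epsilon$. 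Applying this at $v=u(t_0)$ gives $DK_q(u(t_0))[\dot u(t_0)] = \mathbb{E}_{p}[\dot u(t_0)]$.

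This expectation is under $p=e_q(u(t_0))$, not under $q$, so the term to subtract is $\mathbb{E}_p[\dot u(t_0)]$. The statement as written subtracts $\mathbb{E}_q[\dot u(t_0)]$. The two agree when $q=p$, and that is the case the conclusion relies on. Since $u(t)\in B_q$ for all $t$, we also have $\mathbb{E}_q[\dot u(t_0)]=0$ by differentiating the constraint $\mathbb{E}_q[u(t)]=0$. I therefore expect the intended reading to be the centred-at-$p$ version, where both expectations vanish.

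For the particular case $q=p$, we have $u(t_0)=s_p(p)=0$, so $DK_p(0)[\dot u(t_0)]=\mathbb{E}_p[\dot u(t_0)]=0$. This gives \eqref{eq:tangent_score_centered}. Finally, every $h\in B_p$ is attained as the velocity of the curve $t\mapsto e_p(th)$ for small $t$, because $\mathcal{W}_p$ is open. Hence the map from curves to $\dot u(t_0)$ is onto $B_p$, which identifies $T_p\mathcal{M}_\mu$ with $B_p$.

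The main obstacle is the rigorous differentiability of $K_q$ together with the formula for its derivative on the Orlicz-space interior. I would take this from~\cite{r12} rather than reprove it. The required dominated-convergence bound is $|h|e^{v+\epsilon h}\le e^{v+\epsilon' |h|}\cdot C$, combined with H\"older's inequality in the exponential Orlicz class.
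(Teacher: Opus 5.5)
Your argument is correct, and it is more careful than the statement itself. The paper gives no proof of this proposition; it only attributes it to Pistone and Rogantin. Your chart computation is the standard argument behind that citation: write $p(t)=e^{u(t)-K_q(u(t))}\,q$, differentiate, and use $DK_q(v)[h]=\mathbb{E}_{e_q(v)}[h]$ on $\mathrm{Dom}(K_q)^{\circ}$.

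Your diagnosis of the displayed identity is also right. Because $u(t)\in B_q$ for all $t$, one has $\mathbb{E}_q[\dot u(t_0)]=0$. The formula as printed would therefore say that $\dot u(t_0)$ itself is the score, and this fails whenever $p\neq q$. For instance, take a two-point space with $q$ uniform, $h=(1,-1)$, $p(t)=e_q(th)$ and any small $t_0\neq 0$. The left side is $h$, while $\frac{d}{dt}\log(p(t)/p)\big|_{t_0}=h-\tanh(t_0)$. The correct general identity subtracts $\mathbb{E}_{p}[\dot u(t_0)]$, exactly as you derived.

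The case $q=p$ is all the paper uses afterwards, together with the identification $T_p\mathcal{M}_\mu\cong B_p$ (your surjectivity argument via $t\mapsto e_p(th)$). So your proof covers everything in the statement that is true and needed. Taking the differentiability of $K_q$ on the interior of its domain from the cited literature is appropriate, since the paper does the same.
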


\begin{definition}[Fisher Information Metric]
\label{def:fisher_metric}
For $p \in \mathcal{M}_\mu$ and $u, v \in B_p$, the {Fisher--Rao
metric} is the inner product
\begin{equation}
  I_p(u, v) := \langle u, v \rangle_p = \mathbb{E}_p[uv]
  = \mathrm{Cov}_p(u, v),
  \label{eq:fisher_metric}
\end{equation}
which coincides with the scalar product on $B_p$ introduced in
equation~\eqref{eq:inner_product_bp}. When restricted to the parametric family $\mathcal{N}$ with sufficient statistics $(u_1, \ldots, u_d)$, this metric yields the {Fisher information matrix} at $\theta \in \Theta$
\begin{equation}
  [I(\theta)]_{jk} := \mathrm{Cov}_{p_\theta}(u_j, u_k)
  = \frac{\partial^2 \psi}{\partial \theta_j \partial \theta_k}(\theta),
  \qquad j, k = 1, \ldots, d,
  \label{eq:fisher_matrix}
\end{equation}
where the second equality follows from Pistone and Rogantin~\cite{r12}, Proposition~16(c). The matrix
$I(\theta)$ is symmetric positive definite whenever $u_1, \ldots, u_d$
are linearly independent in $L^2(p \cdot \mu)$ which is assumed throughout.
\end{definition}

\begin{remark}[Second Moment Decomposition]
\label{rem:second_moment}
Define the {second moment matrix} of the sufficient statistics at
$\theta$ by
\begin{equation}
  [A(\theta)]_{jk} := \mathbb{E}_{p_\theta}[u_j u_k],
  \qquad j, k = 1, \ldots, d.
  \label{eq:A_def}
\end{equation}
Since $u_j \in B_p$ implies $\mathbb{E}_p[u_j] = 0$ but
$\mathbb{E}_{p_\theta}[u_j] = \eta_j(\theta)$ for general $\theta$
(equation~\eqref{eq:mean_params}), the standard covariance identity
$\mathrm{Cov}_{p_\theta}(u_j, u_k)
= \mathbb{E}_{p_\theta}[u_j u_k]
- \mathbb{E}_{p_\theta}[u_j]\,\mathbb{E}_{p_\theta}[u_k]$
gives
\begin{equation}
  A(\theta) = I(\theta) + \eta(\theta)\,\eta(\theta)^{\top}.
  \label{eq:A_I_decomposition}
\end{equation}
This decomposition is central to the embedding $\mathcal{F}$ developed in Section~\ref{sec:spd_embedding}.
\end{remark}

\subsection{Submanifolds and Splitting}
\label{subsec:submanifolds}
The maximal exponential model $\mathcal{E}_p$ is 
infinite-dimensional, but the framework developed in this paper 
operates through finite-dimensional parametric submodels 
$\mathcal{N} \subset \mathcal{E}_p$. This subsection establishes 
that such submodels are smooth submanifolds of 
$\mathcal{M}_\mu$, following the splitting construction of Pistone 
and Rogantin~\cite{r12}. This 
submanifold structure is the geometric prerequisite for the $m$-projection developed in Section~\ref{sec:projection}.

\subsubsection{Splitting of the Tangent Space}
\label{subsubsec:splitting}
Let $V^1_p$ and $V^2_p$ be closed subspaces of $B_p$ with
$V^1_p \cap V^2_p = \{0\}$ and $B_p = V^1_p + V^2_p$. Any element
$u \in B_p$ can then be written uniquely as $u = u_1 + u_2$ with
$u_i \in V^i_p$, defining a splitting $B_p = V^1_p \oplus V^2_p$.
The associated linear projectors $P_i : B_p \to V^i_p$, defined by
$P_i(u) = u_i$, are continuous by the Banach closed-graph
theorem~\cite{r58}.

\subsubsection{Parametric Exponential Families as Submanifolds}
\label{subsubsec:parametric_submanifolds}
Let $u_1, \ldots, u_d \in B_p$ and set $V^1_p =
\mathrm{span}\{u_1, \ldots, u_d\}$. Since $B_p \subset
L^2_0(p\cdot\mu)$ (the space of $p\cdot\mu$-square-integrable
centred functions), the scalar product $\langle\cdot,\cdot\rangle_p$
makes $B_p$ a pre-Hilbert space. The orthogonal projector
$P_{u_1,\ldots,u_d}$ from $B_p$ onto the finite-dimensional subspace
$V^1_p \subset L^2_0(p\cdot\mu)$ is characterized by the normal
equations
\begin{equation}
  \langle u, u_j \rangle_p
  = \sum_{i=1}^{d} \hat{\theta}_i\, \langle u_i, u_j \rangle_p,
  \qquad j = 1, \ldots, d,
  \label{eq:normal_equations}
\end{equation}
where $\hat{\theta} = (\hat{\theta}_1, \ldots, \hat{\theta}_d) \in
\Theta$ gives the unique coordinate representation
$P_{u_1,\ldots,u_d}(u) = \sum_i \hat{\theta}_i u_i$ of the
projection. Since $V^1_p$ is finite-dimensional, it is closed in $B_p$, and $V^2_p = \ker(P_{u_1,\ldots,u_d})$ is a closed subspace of $B_p$,
giving the splitting $B_p = V^1_p \oplus V^2_p$.
 
The parametric family $\mathcal{N}$ defined in~\eqref{eq:param_model}
is then a $d$-dimensional submanifold of $\mathcal{M}_\mu$ in the
sense of Pistone and Rogantin~\cite{r12}. The natural parameter map
\begin{equation}
  \Theta \ni \theta \;\longmapsto\;
  \sum_{j=1}^d \theta_j u_j \in V^1_p \subset B_p
  \label{eq:param_map}
\end{equation}
is a linear isomorphism between $\Theta \subset \mathbb{R}^d$ and
$V^1_p$, and the composition $p_\theta \mapsto \theta$ is a $C^\infty$
chart of $\mathcal{N}$ into $\Theta$
(Pistone and Rogantin~\cite{r12}).
This submanifold structure provides the geometric foundation for the
$m$-projection developed in Section~\ref{sec:projection} and the 
embedding $\mathcal{F}$ developed in Section~\ref{sec:spd_embedding}.

\subsection{Symmetric Positive Definite Matrices}
\label{subsec:spd}

The natural embedding target for the parametric submanifold
$\mathcal{N} \subset \mathcal{E}_p$ is the manifold of SPD matrices.
The embedding $\mathcal{F}$ (constructed in
Section~\ref{sec:spd_embedding}) sends each $p_\theta \in \mathcal{N}$
to the expected outer product $\mathbb{E}_{p_\theta}[VV^\top]$ of the
augmented sufficient statistics vector $V$.

\begin{definition}[SPD Matrices]
\label{def:spd}
The space of $m \times m$ symmetric positive definite matrices is
\[
  \mathrm{SPD}(m) := \left\{ S \in \mathbb{R}^{m \times m} :
  S = S^\top,\;
  v^\top S v > 0 \;\text{for all }\, v \in \mathbb{R}^m \setminus \{0\}
  \right\}.
\]
\end{definition}

The space $\mathrm{SPD}(m)$ is an open convex cone in the space of
symmetric matrices, of dimension $\frac{m(m+1)}{2}$.

\begin{definition}[Affine-Invariant Riemannian Metric]
\label{def:airm}
The AIRM on
$\mathrm{SPD}(m)$ is defined at $S \in \mathrm{SPD}(m)$ by
\begin{equation}
  \langle \Xi, \Omega \rangle_S^{\mathrm{AIRM}} :=
  \frac{1}{2}\,\mathrm{tr}(S^{-1} \Xi S^{-1} \Omega),
  \label{eq:airm}
\end{equation}
for tangent vectors $\Xi, \Omega \in T_S\mathrm{SPD}(m) \cong
\mathrm{Sym}(m)$. The corresponding line element is
\[
  ds^2_{\mathrm{AIRM}} =
  \frac{1}{2}\operatorname{tr}\!\bigl((S^{-1}\,dS)^2\bigr).
\]
\end{definition}

The AIRM is invariant under congruence transformations. For any
invertible matrix $A$,
\begin{equation}
  \langle A\Xi A^\top, A\Omega A^\top \rangle_{ASA^\top}^{\mathrm{AIRM}}
  = \langle \Xi, \Omega \rangle_S^{\mathrm{AIRM}}.
  \label{eq:airm_congruence}
\end{equation}
This invariance property underlies the affine invariance of the test statistic developed in 
Subsection~\ref{subsec:hypothesis_testing}.

The geodesic connecting $S_0, S_1 \in \mathrm{SPD}(m)$ is
\begin{equation}
  \gamma(t) = S_0^{1/2}
  \bigl(S_0^{-1/2} S_1 S_0^{-1/2}\bigr)^t S_0^{1/2},
  \qquad t \in [0,1],
  \label{eq:spd_geodesic}
\end{equation}
and the Riemannian distance is
\begin{equation}
  d_{\mathrm{AIRM}}(S_0, S_1) =
  \frac{1}{\sqrt{2}}\,
  \bigl\|\log\bigl(S_0^{-1/2} S_1 S_0^{-1/2}\bigr)\bigr\|_F.
  \label{eq:airm_distance}
\end{equation}
The manifold $(\mathrm{SPD}(m), \mathrm{AIRM})$ is a Hadamard manifold
(complete, simply connected, non-positive sectional curvature).
Consequently, the geodesic is unique, the distance is globally
well-defined, and the exponential map is a diffeomorphism. The
closed-form expression~\eqref{eq:airm_distance} provides a
computationally tractable lower bound for the pseudo-metric on
$\mathcal{E}_p$ developed in Section~\ref{sec:complete_framework}.


\section{The \texorpdfstring{$m$}{m}-Projection}
\label{sec:projection}

The maximal exponential model $\mathcal{E}_p$ and its
finite-dimensional parametric submodel $\mathcal{N} \subset
\mathcal{E}_p$ were established in
Subsections~\ref{subsec:maximal_model} and~\ref{subsec:submanifolds}
respectively. The present section addresses the systematic
approximation of an arbitrary distribution $q \in \mathcal{E}_p$ by
an element of the finite-dimensional family $\mathcal{N}$. Such
approximation is essential because infinite-dimensional models are
computationally intractable for distance calculations and the embedding $\mathcal{F}$ constructed in Section~\ref{sec:spd_embedding} operates on finite-dimensional sufficient statistics.

The projection is defined by minimizing the Kullback--Leibler
divergence from $q$ to $\mathcal{N}$. The central result of this
section (Theorem~\ref{thm:moment_matching}) is that this optimal
projection is characterized entirely by the matching of sufficient
statistic expectations, a property that connects the
infinite-dimensional geometry of $\mathcal{E}_p$ to the
finite-dimensional parametric structure and forms the computational bridge to the embedding $\mathcal{F}$. The many-to-one nature of this
projection is then shown to induce a pseudo-metric on $\mathcal{E}_p$
rather than a true metric, whose computation is made tractable in
Section~\ref{sec:complete_framework}.

\subsection{The Kullback--Leibler Divergence}
\label{subsec:kl_divergence}
The projection onto $\mathcal{N}$ is defined through 
minimization of the Kullback--Leibler divergence. For probability densities $q, r \in \mathcal{M}_\mu$, the 
{KL divergence from $q$ to $r$} is
\begin{equation}
D_{\mathrm{KL}}(q \| r) := \int_X q(x) 
\log\frac{q(x)}{r(x)} \, d\mu(x) 
= \mathbb{E}_q\!\left[\log(q/r)\right].
\end{equation}
The KL divergence is non-negative, with 
$D_{\mathrm{KL}}(q \| r) = 0$ if and only if $q = r$, $\mu$-almost everywhere~\cite{r3}. It is not 
symmetric in general and the two orderings of its 
arguments give rise to two distinct projections onto 
$\mathcal{N}$, defined in the next subsection.

\subsection{Information Projection}
\label{subsec:m_projection}
Given $q \in \mathcal{E}_p$ and the parametric submodel 
$\mathcal{N} \subset \mathcal{E}_p$, two projections arise 
from the asymmetry of $D_{\mathrm{KL}}$.

\begin{enumerate}[label=(\alph*)]
    \item The {$m$-projection} of $q$ onto $\mathcal{N}$ is
    \begin{equation}
    \pi^{(m)}(q) := \arg\min_{p_\theta \in \mathcal{N}} 
    D_{\mathrm{KL}}(q \| p_\theta).
    \end{equation}
    \item The {e-projection} of $q$ onto $\mathcal{N}$ is
    \begin{equation}
    \pi^{(e)}(q) := \arg\min_{p_\theta \in \mathcal{N}} 
    D_{\mathrm{KL}}(p_\theta \| q).
    \end{equation}
\end{enumerate}

The $m$-projection is the one relevant to the present framework 
because its optimality condition reduces to moment matching 
(Theorem~\ref{thm:moment_matching}), making it computable from 
sample means. The $e$-projection, which arises from the reverse 
KL ordering, does not admit such a characterization in general 
and is not considered further.

\subsection{Moment Matching Characterization}
\label{subsec:moment_matching}

The key structural property of the $m$-projection is that 
it is fully determined by the mean parameters of $q$ with 
respect to the sufficient statistics of $\mathcal{N}$. 
The following theorem characterizes the $m$-projection 
through moment matching and establishes its uniqueness.
The existence of the $m$-projection under a natural condition on $\eta^q$ is 
established in Theorem~\ref{thm:projection_existence}.

\begin{theorem}[Moment Matching Characterization]
\label{thm:moment_matching}
Let $q \in \mathcal{E}_p$ and let $\mathcal{N} = 
\{p_\theta : \theta \in \Theta\}$ be the parametric 
exponential model with sufficient statistics 
$(u_1, \ldots, u_d)$ and log-partition function $\psi$ 
as in~\eqref{eq:param_model}. If the $m$-projection 
$\pi^{(m)}(q) = p_{\theta^*}$ exists, it is unique and 
is characterized by the {moment matching equations}
\begin{equation}
\label{eq:moment_matching}
\mathbb{E}_{p_{\theta^*}}[u_j] = \mathbb{E}_q[u_j], 
\quad j = 1, \ldots, d,
\end{equation}
or equivalently, $\theta^*$ is the unique solution to
\begin{equation}
\label{eq:gradient_matching}
\frac{\partial \psi}{\partial \theta_j}(\theta^*) = 
\mathbb{E}_q[u_j], \quad j = 1, \ldots, d.
\end{equation}
\end{theorem}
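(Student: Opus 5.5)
The plan is to write $D_{\mathrm{KL}}(q\|p_\theta)$ explicitly as a function of $\theta$ and reduce the $m$-projection to minimizing a strictly convex function on the open set $\Theta$. Using the form \eqref{eq:param_model}, $\log p_\theta = \sum_j \theta_j u_j - \psi(\theta) + \log p$. Hence
\begin{equation*}
D_{\mathrm{KL}}(q\|p_\theta) = \mathbb{E}_q[\log(q/p)] - \sum_{j=1}^d \theta_j\,\mathbb{E}_q[u_j] + \psi(\theta).
\end{equation*}
We must first check that every term is finite. Since $q\in\mathcal{E}_p$, we have $q = e^{v-K_p(v)}p$ with $v\in\mathrm{Dom}(K_p)^\circ$. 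Then $\log(q/p) = v - K_p(v)$, and $\mathbb{E}_q[|v|]$ and $\mathbb{E}_q[|u_j|]$ are finite because $u_j, v\in L^{\Phi_1}(p\cdot\mu)$ and $v$ lies in the interior of the domain. Hölder's inequality with $e^{v}\in L^{a}(p)$ for some $a>1$ then gives integrability against $q$. So the objective is $L(\theta) := \psi(\theta) - \langle \theta, \eta^q\rangle + C$, where $\eta^q_j := \mathbb{E}_q[u_j]$ and $C$ does not depend on $\theta$.

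Next we use the smoothness and convexity of $\psi$ on $\Theta$. Differentiation under the integral sign gives \eqref{eq:mean_params}, and \eqref{eq:fisher_matrix} gives $\nabla^2\psi(\theta) = I(\theta)$. Under the standing linear independence assumption of Definition~\ref{def:fisher_metric}, $I(\theta)$ is positive definite, so $L$ is strictly convex on the open set $\Theta$. We also need $\Theta$ to be convex. This follows because $\mathrm{Dom}(K_p)$ is convex, by Hölder's inequality applied to $K_p$, and the interior of a convex set is convex. So $\Theta$, the preimage of that interior under a linear map, is convex.

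With these facts the conclusions follow. If $\theta^*$ is a minimizer, it is an interior critical point because $\Theta$ is open, so $\nabla L(\theta^*) = \nabla\psi(\theta^*) - \eta^q = 0$. This is exactly \eqref{eq:gradient_matching}, and by \eqref{eq:mean_params} it is equivalent to \eqref{eq:moment_matching}. Conversely, for a convex differentiable function any critical point is a global minimizer, so the matching equations characterize the projection. For uniqueness, strict convexity on the convex set $\Theta$ gives at most one minimizer and at most one critical point. Equivalently, $\nabla\psi$ is injective: if $\nabla\psi(\theta_1) = \nabla\psi(\theta_2)$, integrate $\langle \nabla\psi(\theta_1) - \nabla\psi(\theta_2), \theta_1 - \theta_2\rangle$ along the segment joining them, which lies in $\Theta$ by convexity. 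The integrand is $(\theta_1-\theta_2)^\top I(\theta_1-\theta_2) > 0$ unless $\theta_1 = \theta_2$. Finally, $\theta\mapsto p_\theta$ is injective because the $u_j$ are linearly independent, so uniqueness of $\theta^*$ gives uniqueness of $p_{\theta^*}$.

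The main obstacle is the integrability step: justifying that $\mathbb{E}_q[u_j]$ and $\mathbb{E}_q[\log(q/p)]$ are finite, so that the decomposition of $D_{\mathrm{KL}}$ is legitimate, and that differentiation under the integral holds. I would handle this through the Orlicz space characterization in Proposition~\ref{prop:cramer_orlicz}. Membership of $v$ in the open domain lets one take $(1+\epsilon)v$ still in the domain, so $q/p\in L^{1+\epsilon}(p)$. Meanwhile $u_j$ has all moments under $p$, and Hölder's inequality then yields finiteness.
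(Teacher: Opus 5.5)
Your proposal is correct and follows the same route as the paper: you rewrite $D_{\mathrm{KL}}(q\|p_\theta)$ as $\psi(\theta) - \theta^\top\eta^q$ plus a $\theta$-independent constant, read off the first-order condition $\nabla\psi(\theta^*) = \eta^q$, and obtain uniqueness from strict convexity via $\nabla^2\psi = I(\theta)\succ 0$. Your version is somewhat more careful than the paper's on two points. First, you verify finiteness of $\mathbb{E}_q[u_j]$ and $\mathbb{E}_q[\log(q/p)]$ by H\"older; this avoids the paper's split into $\int q\log q\,d\mu$ and $\int q\log p_\theta\,d\mu$, which need not be separately finite for a general $\sigma$-finite $\mu$. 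Second, you prove the convexity of $\Theta$ that the paper's ``open convex domain'' argument tacitly assumes.
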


\begin{proof}
This result is classical for exponential 
families~\cite{r30, r7}; 
the proof below establishes it within the present 
framework using the notation of 
Section~\ref{sec:preliminaries}.

Expanding the KL divergence
\begin{align}
D_{\mathrm{KL}}(q \| p_\theta) 
= \int_X q \log q \, d\mu 
- \int_X q \log p_\theta \, d\mu.
\end{align}
The first term is independent of $\theta$, so minimizing 
$D_{\mathrm{KL}}(q \| p_\theta)$ over $\Theta$ is 
equivalent to maximizing $L(\theta) := 
\mathbb{E}_q[\log p_\theta]$. Substituting 
$\log p_\theta(x) = \sum_{j=1}^d \theta_j u_j(x) - 
\psi(\theta) + \log p(x)$ from~\eqref{eq:param_model} 
and using linearity of expectation
\begin{equation}
L(\theta) = \sum_{j=1}^d \theta_j \mathbb{E}_q[u_j] 
- \psi(\theta) + \mathbb{E}_q[\log p],
\end{equation}
where $\mathbb{E}_q[u_j]$ are constants fixed by $q$ 
and $\mathbb{E}_q[\log p]$ is constant in $\theta$. 
Differentiating with respect to $\theta_k$ and setting 
to zero
\begin{equation}
\frac{\partial L}{\partial \theta_k}(\theta) = 
\mathbb{E}_q[u_k] - \frac{\partial \psi}{\partial 
\theta_k}(\theta) = 0 \quad \Longrightarrow \quad 
\frac{\partial \psi}{\partial \theta_k}(\theta^*) = 
\mathbb{E}_q[u_k].
\end{equation}
By~\eqref{eq:mean_params}, $\frac{\partial \psi}
{\partial \theta_k}(\theta) = \mathbb{E}_{p_\theta}[u_k]$ 
for all $\theta \in \Theta$, so at the critical 
point $\theta^*$
\begin{equation}
\mathbb{E}_{p_{\theta^*}}[u_k] = 
\frac{\partial \psi}{\partial \theta_k}(\theta^*) = 
\mathbb{E}_q[u_k], \quad k = 1, \ldots, d,
\end{equation}
establishing~\eqref{eq:moment_matching} 
and~\eqref{eq:gradient_matching}. For uniqueness, 
the Hessian of $L$ is
\begin{equation}
\nabla^2 L(\theta) = 
-\nabla^2\psi(\theta) = -I(\theta),
\end{equation}
where $I(\theta)$ is the Fisher information 
matrix~\eqref{eq:fisher_matrix}. Since $I(\theta) 
\succ 0$ for all $\theta \in \Theta$ by the linear 
independence of $u_1, \ldots, u_d$ in $B_p$ 
(Subsection~\ref{subsec:tangent_fisher}), the Hessian 
of $L$ is negative definite everywhere, making $L$ 
strictly concave. A strictly concave function on an 
open convex domain has at most one critical point, and 
any critical point is a global maximum. The critical 
point $\theta^*$ is therefore the unique global 
maximizer of $L(\theta)$, and hence the unique 
minimizer of $D_{\mathrm{KL}}(q \| p_\theta)$ 
over $\mathcal{N}$. Existence of such a critical 
point equivalently, solvability of the moment 
matching equations requires $\eta^q$ to lie in 
the image of $\nabla\psi$, and is established 
separately in Theorem~\ref{thm:projection_existence}.

\end{proof}

\begin{remark}
\label{rem:moment_geometric}
Theorem~\ref{thm:moment_matching} shows that the
$m$-projection depends on $q$ only through its moment
vector $\eta^q := (\mathbb{E}_q[u_1], \ldots,
\mathbb{E}_q[u_d])^\top \in \mathbb{R}^d$. Two
distributions $q, q' \in \mathcal{E}_p$ with
$\eta^q = \eta^{q'}$ have the same $m$-projection,
regardless of how they differ in higher-order
structure. This is the direct cause of the many-to-one
behaviour established in
Subsection~\ref{subsec:many_to_one}, and $\eta^q$
itself is the mean parameter vector introduced in
Subsection~\ref{subsec:maximal_model}.
\end{remark}
The geometric relationship between $q$, $\mathcal{N}$, 
and $\pi^{(m)}(q)$ is illustrated in 
Figure~\ref{fig:m_projection_geometry}.
\subsection{Existence of \texorpdfstring{$m$}{m}-Projection}
\label{subsec:projection_existence}

Theorem~\ref{thm:moment_matching} establishes that 
any $m$-projection, if it exists, is necessarily unique 
and is characterized by the moment matching 
equations~\eqref{eq:moment_matching}. The remaining 
question is when such a $\theta^*$ actually exists, 
that is, when the moment vector $\eta^q$ of the target 
distribution $q$ is achievable by some element of 
$\mathcal{N}$.

\begin{theorem}[Existence of $m$-Projection]
\label{thm:projection_existence}
Let $q \in \mathcal{E}_p$ and let
\begin{equation}
\mathcal{H}_{\eta} := \nabla\psi(\Theta) = 
\bigl\{ \eta(\theta) \in \mathbb{R}^d : 
\theta \in \Theta \bigr\}
\end{equation}
be the interior of the mean parameter space of $\mathcal{N}$. If 
$\eta^q = (\mathbb{E}_q[u_1], \ldots, 
\mathbb{E}_q[u_d])^\top \in \mathcal{H}_{\eta}$, then the 
$m$-projection $\pi^{(m)}(q) = p_{\theta^*}$ exists.
\end{theorem}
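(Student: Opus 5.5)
Since $\eta^q \in \mathcal{H}_\eta = \nabla\psi(\Theta)$, there is by definition some $\theta^* \in \Theta$ with $\nabla\psi(\theta^*) = \eta^q$. The argument is therefore to show that this $\theta^*$ is a global minimizer of $\theta \mapsto D_{\mathrm{KL}}(q\|p_\theta)$ over $\Theta$. Uniqueness and the characterization then follow from Theorem~\ref{thm:moment_matching}.

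The steps, in order, are as follows.

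First, I would verify that the objective is finite. Because $q \in \mathcal{E}_p$, we can write $q = e^{v - K_p(v)}\cdot p$ with $v \in \mathrm{Dom}(K_p)^\circ$. Each $u_j$ lies in $B_p \subset L^{\Phi_1}(p\cdot\mu)$, and $\log(q/p) = v - K_p(v)$. Since Orlicz functions in $L^{\Phi_1}(p)$ are also in $L^{\Phi_1}(q)$ for $q$ in the same connected component (Theorem~\ref{thm:connected_component}, Pistone--Sempi), the quantities $\mathbb{E}_q[u_j]$, $\mathbb{E}_q[\log q/p]$ and hence $D_{\mathrm{KL}}(q\|p_\theta)$ are finite for every $\theta\in\Theta$.

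Second, I would rewrite the divergence, as in the proof of Theorem~\ref{thm:moment_matching}:
\[
D_{\mathrm{KL}}(q\|p_\theta) = \mathbb{E}_q[\log(q/p)] - \Bigl(\langle\theta,\eta^q\rangle - \psi(\theta)\Bigr),
\]
so that minimizing the divergence is the same as maximizing $L(\theta) = \langle\theta,\eta^q\rangle - \psi(\theta)$ on $\Theta$.

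Third, I would show that $\Theta$ is convex and that $\psi$ is convex on it. Convexity of $K_p$ follows from H\"older's inequality, so $\mathrm{Dom}(K_p)^\circ$ is convex, and $\Theta$, its preimage under a linear map, is convex as well. Moreover, $\nabla^2\psi = I(\theta)\succ 0$. Hence $L$ is strictly concave and differentiable on the open convex set $\Theta$, and $\nabla L(\theta^*) = \eta^q - \nabla\psi(\theta^*) = 0$. A critical point of a concave differentiable function is a global maximizer: by the gradient inequality for convex $\psi$, $\psi(\theta)\ge\psi(\theta^*)+\langle\nabla\psi(\theta^*),\theta-\theta^*\rangle$, which gives $L(\theta)\le L(\theta^*)$ for all $\theta$. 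Therefore $p_{\theta^*}$ attains the minimum, and the $m$-projection exists.

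I expect the main obstacle to be the first step, namely justifying that $\mathbb{E}_q[u_j]$ and $\mathbb{E}_q[\log q]$ are finite, so that the rewriting of the divergence is legitimate rather than an $\infty-\infty$ expression. I would handle it first with the Pistone--Sempi result that $L^{\Phi_1}(p\cdot\mu)=L^{\Phi_1}(q\cdot\mu)$ for $q\in\mathcal{E}_p$. Failing that, I would bound the expectations directly via H\"older's inequality, using $q/p = e^{v-K_p(v)}$ with $v$ in the interior of the domain, so that $(q/p)^{1+\epsilon}\in L^1(p)$, together with the finiteness of all moments of $u_j$ under $p$. The remaining steps are routine convex analysis.
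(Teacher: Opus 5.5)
Your proof is correct and follows essentially the same route as the paper: you take $\theta^*$ with $\nabla\psi(\theta^*)=\eta^q$ from the hypothesis, note that $\eta^q$ is finite by the Orlicz structure, and conclude that $p_{\theta^*}$ minimizes $D_{\mathrm{KL}}(q\|p_\theta)$ because $L$ is concave. The only difference is that the paper delegates the global-optimality step to Theorem~\ref{thm:moment_matching}, whereas you prove it directly with the gradient inequality, check that $\Theta$ is convex, and work with $\log(q/p)$ to avoid an $\infty-\infty$ expression, which makes your version more self-contained.
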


\begin{proof}
Since $u_j \in B_p$ and $q \in \mathcal{E}_p$ together 
imply $\mathbb{E}_q[|u_j|] < \infty$ by the Orlicz 
space structure of 
Subsection~\ref{subsec:orlicz_cramer}, the vector 
$\eta^q$ is well-defined and finite in $\mathbb{R}^d$. Since $\eta^q \in \mathcal{H}_{\eta} = \nabla\psi(\Theta)$  by hypothesis, there exists $\theta^* \in \Theta$  satisfying $\nabla\psi(\theta^*) = \eta^q$. By 
equation~\eqref{eq:mean_params}, this is equivalent 
to the moment matching 
condition~\eqref{eq:moment_matching}, and 
Theorem~\ref{thm:moment_matching} then gives 
$p_{\theta^*} = \pi^{(m)}(q)$.
\end{proof}
\begin{remark}[Boundary Behaviour]
\label{rem:boundary_behavior}
If $\eta^q$ lies on the boundary $\partial\mathcal{H}_{\eta}$, 
the infimum of $D_{\mathrm{KL}}(q \| p_\theta)$ over 
$\mathcal{N}$ is not attained by any $p_\theta \in 
\mathcal{N}$, and the optimal sequence of natural 
parameters diverges toward the boundary of $\Theta$. 
The hypothesis $\eta^q \in \mathcal{H}_{\eta}$ excludes 
this degenerate case.
\end{remark}

\begin{figure}[htbp]
\centering
\begin{tikzpicture}[scale=1.6, >=Stealth]
    \draw[thick, blue!60!black, fill=blue!5] 
        (0,0) coordinate (A) 
        to[out=20, in=190] (3,0.2) 
        to[out=10, in=260] (6.0,2.5) 
        to[out=80, in=350] (4.5,4.8) 
        to[out=170, in=20] (0.5,4.5) 
        to[out=200, in=110] cycle;
    \node[blue!80!black, font=\large\bfseries, 
          anchor=north east] at (5.8, 4.5) {$\mathcal{E}_p$};
    \node[blue!60!black, font=\scriptsize, align=right, 
          anchor=north east] at (5.8, 4.1) 
          {Maximal\\Exponential\\Model};

    \draw[thick, red!70!black, fill=red!10] 
        (1.2,1.0) coordinate (N1)
        to[out=10, in=170] (4.2,1.2) coordinate (N2)
        to[out=100, in=280] (4.4,2.4) coordinate (N3)
        to[out=190, in=10] (1.4,2.2) coordinate (N4)
        to[out=260, in=80] cycle;
    \node[red!80!black, font=\large\bfseries] 
          at (3.8, 2.3) {$\mathcal{N}$};
    \node[red!70!black, font=\tiny, align=center] 
          at (3.8, 2.0) {Parametric\\Submodel};

    \filldraw[black] (1.8, 3.8) circle (1.2pt) 
              coordinate (q);
    \node[above left=2pt of q, font=\large] {$q$};
    \node[above right=2pt of q, font=\scriptsize, gray] 
          {$\eta^q = (\mathbb{E}_q[u_j])_{j=1}^d$};
    \filldraw[red!70!black] (2.5, 1.8) circle (1.2pt) 
              coordinate (proj);
    \node[below right=2pt of proj, font=\small, 
          red!70!black, align=left] 
          {$p_{\theta^*} = \pi^{(m)}(q)$};
    \node[below=14pt of proj, font=\scriptsize, 
          red!70!black] 
          {$\eta(\theta^*) = (\mathbb{E}_{p_{\theta^*}}[u_j])_{j=1}^d$};
    \draw[->, thick, dashed, purple!70!black] (q) -- (proj) 
        node[midway, left=4pt, font=\tiny, 
             text=purple!80!black, align=right] 
             {$m$-projection\\(KL Minimization)};

    \node[draw=green!50!black, fill=white, rounded corners, 
          font=\scriptsize, align=center, line width=0.6pt] 
          (match) at (-1.0, 3.0) 
          {Moment Matching:\\$\eta(\theta^*) = \eta^q$};
    \draw[->, green!50!black, thin] (match.south) 
          to[out=270, in=180] (proj);

    \filldraw[blue!70!black] (5.2, 1.4) circle (1.2pt) 
              coordinate (ref);
    \node[right=2pt of ref, font=\small, 
          blue!70!black] {$p$};
    \node[below=2pt of ref, font=\tiny, 
          blue!60!black, align=center] 
          {Reference\\Density};

    \node[anchor=south west, font=\tiny, gray, align=left] 
          at (0.3, 0.3) {
        $\theta$: natural parameters\\
        $\eta$: mean parameters
    };
\end{tikzpicture}
\caption{The $m$-projection $\pi^{(m)} \colon \mathcal{E}_p \to \mathcal{N}$ minimizes $D_{\mathrm{KL}}(q \| p_\theta)$ 
over $p_\theta \in \mathcal{N}$. The optimal projection 
$p_{\theta^*}$ is uniquely characterized by the moment 
matching condition $\eta(\theta^*) = \eta^q$: the mean 
parameters of $p_{\theta^*}$ under $\mathcal{N}$ equal 
the moment vector of $q$ with respect to the sufficient 
statistics $(u_1,\ldots,u_d)$.}
\label{fig:m_projection_geometry}
\end{figure}

\begin{remark}[Connection to the Embedding $\mathcal{F}$]
\label{rem:projection_to_embedding}
The moment vector $\eta^q$, which satisfies  $\eta^q = \nabla\psi(\theta^*)$ at the projection point, 
is precisely the input through which $\mathcal{F}$ 
receives information about $q$; the embedding 
$\mathcal{F}\colon\mathcal{N}\to\mathrm{SPD}(d+1)$ 
constructed in Section~\ref{sec:spd_embedding} 
depends on $q \in \mathcal{E}_p$ exclusively via 
$\theta^* = (\nabla\psi)^{-1}(\eta^q)$.
\end{remark}

\subsection{The Many-to-One Nature of \texorpdfstring{$m$}{m}-Projection}
\label{subsec:many_to_one}
The $m$-projection reduces an infinite-dimensional 
distribution to a finite-dimensional moment vector. 
This dimensional reduction is unavoidably non-injective.

\begin{proposition}
\label{prop:many_to_one}
The $m$-projection $\pi^{(m)} \colon \mathcal{E}_p \to 
\mathcal{N}$ is not injective. For any $p_\theta \in 
\mathcal{N}$, there exist infinitely many distinct 
$q_1, q_2 \in \mathcal{E}_p$ with $q_1 \neq q_2$ such 
that $\pi^{(m)}(q_1) = \pi^{(m)}(q_2) = p_\theta$.
\end{proposition}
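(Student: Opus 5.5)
The plan is to show that the fibre $(\pi^{(m)})^{-1}(p_\theta)$ contains infinitely many distinct elements by perturbing $p_\theta$ inside $\mathcal{E}_p$ along a direction orthogonal to the sufficient statistics. By Theorem~\ref{thm:moment_matching} and Remark~\ref{rem:moment_geometric}, a density $q \in \mathcal{E}_p$ satisfies $\pi^{(m)}(q) = p_\theta$ exactly when $\eta^q = \eta(\theta)$. So it suffices to produce a one-parameter family $q_t \in \mathcal{E}_p$, with $t$ in an interval, whose members are pairwise distinct and satisfy $\mathbb{E}_{q_t}[u_j] = \eta_j(\theta)$ for all $j$.

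The construction is a mixture-type perturbation. First, assume $B_{p}$ is infinite-dimensional; this is the standing situation, since otherwise $\mathcal{E}_p$ is itself finite-dimensional. Then I pick a bounded measurable $h$ with $\mathbb{E}_{p_\theta}[h] = 0$ and $\mathbb{E}_{p_\theta}[h u_j] = 0$ for $j=1,\dots,d$, and $h \neq 0$. Such an $h$ exists because the bounded functions are infinite-dimensional when $\mu$ is non-atomic or has infinite support. One takes $d+2$ linearly independent bounded functions, for instance indicators of disjoint sets of positive measure, and finds a nontrivial combination killing the $d+1$ linear constraints. These constraints are finite because $u_j \in L^{\Phi_1}$, so each $u_j$ has all moments under $p_\theta$.

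Next define $q_t := (1 + t h)\, p_\theta$ for $|t| < 1/\|h\|_\infty$. Each $q_t$ is a strictly positive density because $\mathbb{E}_{p_\theta}[h]=0$, and the orthogonality conditions give $\mathbb{E}_{q_t}[u_j] = \eta_j(\theta)$. Distinctness for different $t$ holds because $h \neq 0$ on a set of positive measure.

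The remaining step, and the one I expect to be the main obstacle, is verifying $q_t \in \mathcal{E}_p$. The ratio $q_t/p_\theta = 1 + th$ is bounded above and below by positive constants, so $\log(q_t/p_\theta)$ is bounded. I would then invoke the e-connectedness characterization behind Theorem~\ref{thm:connected_component}: $q_t \in \mathcal{E}_{p_\theta} = \mathcal{E}_p$, since $p_\theta \in \mathcal{N}\subset\mathcal{E}_p$ and the maximal exponential models of points in the same component coincide. Concretely, $q_t = e^{v - K_{p_\theta}(v)} p_\theta$ with $v = \log(1+th) - \mathbb{E}_{p_\theta}[\log(1+th)]$. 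Since $v$ is bounded, every exponential moment is finite, so $v \in \mathrm{Dom}(K_{p_\theta})^\circ$. The transition formula~\eqref{eq:transition_map_Ep} then moves this representation to the chart at $p$.

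Taking any two distinct values $t_1 \neq t_2$ yields $q_1 \neq q_2$ with common projection $p_\theta$, and the interval of $t$ gives infinitely many such distributions.
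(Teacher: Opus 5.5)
Your proof is correct, and it takes a genuinely different, constructive route. The paper argues only by a dimension count: the fibre $(\pi^{(m)})^{-1}(p_\theta)$ is cut out of the infinite-dimensional $\mathcal{E}_p$ by the $d$ scalar constraints $\mathbb{E}_q[u_j]=\eta_j(\theta)$, so it ``must'' contain infinitely many points. That is quick, but it has three weaknesses. It never exhibits a second element of the fibre, it never checks that such elements lie in $\mathcal{E}_p$, and it would need something like a submersion property of $q\mapsto\eta^q$ to become rigorous.

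You instead move along the mixture line $q_t=(1+th)p_\theta$ and verify each property directly:
\begin{itemize}
\item normalisation and moment matching follow from $h\perp\{1,u_1,\dots,u_d\}$ in $L^2(p_\theta\cdot\mu)$;
\item positivity and a bounded log-ratio follow from $|t|\,\|h\|_\infty<1$;
\item membership follows from $\mathcal{E}_{p_\theta}=\mathcal{E}_p$ (Theorem~\ref{thm:connected_component});
\item $\pi^{(m)}(q_t)=p_\theta$ follows from Theorems~\ref{thm:moment_matching} and~\ref{thm:projection_existence}, since $\eta^{q_t}=\eta(\theta)\in\mathcal{H}_\eta$.
\end{itemize}

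Your route also exposes a hypothesis the paper leaves implicit. What is really needed is $d+2$ disjoint sets of positive $\mu$-measure, equivalently $\dim B_p\ge d+1$. Without it the statement fails: on a sample space of $d+1$ atoms one has $\mathcal{N}=\mathcal{E}_p$, and $\pi^{(m)}$ is the identity.

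Two small tidy-ups. First, finiteness of $\mathbb{E}_{p_\theta}[|u_j|]$ does not follow directly from $u_j\in L^{\Phi_1}(p\cdot\mu)$. It follows from $\theta$ lying in the open set $\Theta$, which gives $\mathbb{E}_{p_\theta}[e^{\epsilon u_j}]=e^{\psi(\theta+\epsilon\mathbf{e}_j)-\psi(\theta)}<\infty$ for small $|\epsilon|$. Second, interiority of the bounded $v$ in $\mathrm{Dom}(K_{p_\theta})$ follows from $\mathbb{E}_{p_\theta}[e^{v+w}]\le e^{\|v\|_\infty}\mathbb{E}_{p_\theta}[e^{w}]<\infty$ for all $w\in\mathcal{W}_{p_\theta}$. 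Note also that the transition formula is not what places $q_t$ in $\mathcal{E}_p$; the identity $\mathcal{E}_{p_\theta}=\mathcal{E}_p$ does that work.
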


\begin{proof}
By Theorem~\ref{thm:moment_matching}, $\pi^{(m)}(q) = p_\theta$ 
if and only if $\mathbb{E}_q[u_j] = \eta_j(\theta)$ for all 
$j = 1,\ldots,d$. This condition depends on $q$ only through 
the $d$-dimensional moment vector $\eta^q \in \mathbb{R}^d$, 
leaving all higher-order structure of $q$ unconstrained. 
The fiber
\begin{equation}
(\pi^{(m)})^{-1}(p_\theta)
= \bigl\{ q \in \mathcal{E}_p : \mathbb{E}_q[u_j] = \eta_j(\theta),\;
j = 1,\ldots,d \bigr\}
\end{equation}
is cut out by only $d$ real-valued constraints on the 
infinite-dimensional space $\mathcal{E}_p$. Since 
$\mathcal{E}_p$ is infinite-dimensional, this fiber contains 
infinitely many distinct elements.
\end{proof}

\subsection{The Induced Pseudo-Metric on \texorpdfstring{$\mathcal{E}_p$}{Ep}}
\label{subsec:pseudo_metric}

The many-to-one nature of $\pi^{(m)}$ established in 
Proposition~\ref{prop:many_to_one} has a direct 
consequence for the distance structure on $\mathcal{E}_p$. 
Since the $m$-projection collapses infinitely many 
distributions to the same point of $\mathcal{N}$, any 
distance function on $\mathcal{N}$ pulled back to 
$\mathcal{E}_p$ via $\pi^{(m)}$ cannot be a true metric, 
as it assigns distance zero to distinct distributions 
that share the same moment vector. The resulting structure 
is a pseudo-metric, defined as follows.

\begin{definition}[Pseudo-Metric on $\mathcal{E}_p$]
\label{def:projected_distance}
Let $\varrho$ be any metric on $\mathcal{N}$. The {projected pseudo-metric} on 
$\mathcal{E}_p$ induced by $\pi^{(m)}$ and $\varrho$ is
\begin{equation}
\label{eq:projected_distance}
\tilde{d}(q_1, q_2) := \varrho\!\left(\pi^{(m)}(q_1),\, \pi^{(m)}(q_2)\right), \quad q_1, q_2 \in \mathcal{E}_p.
\end{equation}
\end{definition}

\begin{remark}
\label{rem:metric_choice}
The definition above holds for any metric $\varrho$ on $\mathcal{N}$. In the present framework the natural choice is the Riemannian distance on $\mathcal{N}$ induced by the pullback of the AIRM on $\mathrm{SPD}(d+1)$ via the embedding constructed in Section~\ref{sec:spd_embedding}. That specific metric, denoted $d_{\mathcal{F}}$, is defined and analyzed there; the properties of $\tilde{d}$ established in Proposition~\ref{prop:pseudo_metric} hold for any 
choice of $\varrho$ and in particular for $d_{\mathcal{F}}$.
\end{remark}

\begin{proposition}[Pseudo-Metric Properties]
\label{prop:pseudo_metric}
For any metric $\varrho$ on $\mathcal{N}$, the projected 
distance $\tilde{d} \colon \mathcal{E}_p \times \mathcal{E}_p 
\to [0,\infty)$ satisfies:
\begin{enumerate}[label=(\roman*)]
    \item \textbf{Non-negativity:} $\tilde{d}(q_1,q_2) 
    \geq 0$ for all $q_1, q_2 \in \mathcal{E}_p$.
    \item \textbf{Symmetry:} $\tilde{d}(q_1,q_2) = 
    \tilde{d}(q_2,q_1)$.
    \item \textbf{Triangle inequality:} 
    $\tilde{d}(q_1,q_3) \leq \tilde{d}(q_1,q_2) + 
    \tilde{d}(q_2,q_3)$.
    \item \textbf{Degeneracy:} $\tilde{d}(q_1,q_2) = 0$ 
    if and only if $\mathbb{E}_{q_1}[u_j] = 
    \mathbb{E}_{q_2}[u_j]$ for all $j = 1,\ldots,d$.
\end{enumerate}
In particular, $\tilde{d}$ is a pseudo-metric on 
$\mathcal{E}_p$; that is, it satisfies all metric axioms except 
that $\tilde{d}(q_1,q_2) = 0$ does not imply $q_1 = q_2$. 
\end{proposition}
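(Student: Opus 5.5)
The plan is to reduce every property of $\tilde{d}$ to the corresponding metric axiom for $\varrho$ on $\mathcal{N}$, using the fact that $\tilde{d}$ is the pullback of $\varrho$ under the map $\pi^{(m)}$. First I fix the domain. For $\tilde{d}$ to be defined on $\mathcal{E}_p\times\mathcal{E}_p$, the projection $\pi^{(m)}(q)$ must exist for each argument. I will therefore work on the set of $q\in\mathcal{E}_p$ with $\eta^q\in\mathcal{H}_\eta$, which is the standing hypothesis of Theorem~\ref{thm:projection_existence}. On this set Theorem~\ref{thm:moment_matching} gives a well-defined, single-valued map $q\mapsto\pi^{(m)}(q)=p_{\theta^*(q)}$, with $\theta^*(q)=(\nabla\psi)^{-1}(\eta^q)$. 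Single-valuedness follows from the uniqueness part of that theorem.

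Next come items (i)--(iii), each inherited directly from $\varrho$.
\begin{itemize}
\item Non-negativity: $\tilde{d}(q_1,q_2)=\varrho(\pi^{(m)}(q_1),\pi^{(m)}(q_2))\ge 0$.
\item Symmetry: $\varrho$ is symmetric, so swapping $q_1$ and $q_2$ does not change the value.
\item Triangle inequality: set $r_i:=\pi^{(m)}(q_i)$. Then
\[
\tilde{d}(q_1,q_3)=\varrho(r_1,r_3)\le\varrho(r_1,r_2)+\varrho(r_2,r_3)=\tilde{d}(q_1,q_2)+\tilde{d}(q_2,q_3).
\]
\end{itemize}
None of these steps uses anything specific to $\mathcal{N}$.

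Item (iv) is the only step with content. Because $\varrho$ is a genuine metric, $\tilde{d}(q_1,q_2)=0$ holds if and only if $\pi^{(m)}(q_1)=\pi^{(m)}(q_2)$, i.e.\ $p_{\theta_1^*}=p_{\theta_2^*}$ as elements of $\mathcal{N}$. I then pass to parameters and then to moments. The map $\theta\mapsto p_\theta$ is injective, since $\theta\mapsto\sum_j\theta_ju_j$ is a linear isomorphism onto $V_p^1$ (Subsection~\ref{subsubsec:parametric_submanifolds}); equality of densities gives equality of centred log-likelihoods $\sum_j\theta_ju_j$, hence $\theta_1^*=\theta_2^*$. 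Applying $\nabla\psi$ gives $\eta^{q_1}=\nabla\psi(\theta_1^*)=\nabla\psi(\theta_2^*)=\eta^{q_2}$ by \eqref{eq:gradient_matching}. For the converse, $\eta^{q_1}=\eta^{q_2}$ means both $\theta_i^*$ solve the same moment matching equations, so they coincide by the uniqueness in Theorem~\ref{thm:moment_matching}. Alternatively, $\nabla\psi$ is injective because $\psi$ is strictly convex with Hessian $I(\theta)\succ0$ on a convex domain.

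The main obstacle is this injectivity chain in (iv). I must make sure that "$\pi^{(m)}(q_1)=\pi^{(m)}(q_2)$" is read as equality of points of $\mathcal{N}$ and that it pins down $\theta$ uniquely; this uses the linear independence of $u_1,\dots,u_d$ assumed throughout. I also need $\Theta$ to be convex, or else I rely only on the uniqueness statement of Theorem~\ref{thm:moment_matching}. Convexity of $\Theta$ holds because $\mathrm{Dom}(K_p)$ is convex, being the domain of a convex functional.

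Finally, for the concluding claim that $\tilde{d}$ is not a metric, I invoke Proposition~\ref{prop:many_to_one}. It supplies $q_1\ne q_2$ with equal moment vectors, and for these $\tilde{d}(q_1,q_2)=0$ by (iv). So the identity of indiscernibles fails, while all the remaining metric axioms hold.
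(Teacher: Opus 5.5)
Your proposal is correct and follows the paper's proof essentially step for step. Properties (i)--(iii) are inherited directly from $\varrho$; (iv) reduces $\tilde{d}(q_1,q_2)=0$ to $\pi^{(m)}(q_1)=\pi^{(m)}(q_2)$ and then to equal moment vectors via Theorem~\ref{thm:moment_matching}; and Proposition~\ref{prop:many_to_one} supplies distinct $q_1\neq q_2$ at distance zero. Restricting to those $q$ with $\eta^q\in\mathcal{H}_\eta$, so that $\pi^{(m)}$ is actually defined (the paper's statement on all of $\mathcal{E}_p\times\mathcal{E}_p$ glosses over this), and spelling out why $\theta\mapsto p_\theta$ is injective tighten the argument without changing the route.
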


\begin{proof}
Properties~(i), (ii), and (iii) follow directly from the 
corresponding properties of $\varrho$ on $\mathcal{N}$. For 
(i) and (ii): since $\varrho \geq 0$ and $\varrho$ is symmetric, 
$\tilde{d}(q_1,q_2) = \varrho(\pi^{(m)}(q_1),\pi^{(m)}(q_2)) 
\geq 0$ and equals $\varrho(\pi^{(m)}(q_2),\pi^{(m)}(q_1)) = 
\tilde{d}(q_2,q_1)$. For~(iii) the triangle inequality 
for $\varrho$ on $\mathcal{N}$ gives
\begin{align*}
\tilde{d}(q_1,q_3) 
&= \varrho\!\left(\pi^{(m)}(q_1), \pi^{(m)}(q_3)\right) \\
&\leq \varrho\!\left(\pi^{(m)}(q_1), \pi^{(m)}(q_2)\right) 
+ \varrho\!\left(\pi^{(m)}(q_2), \pi^{(m)}(q_3)\right) \\
&= \tilde{d}(q_1,q_2) + \tilde{d}(q_2,q_3).
\end{align*}
For~(iv) $\tilde{d}(q_1,q_2) = 0$ if and only if 
$\varrho(\pi^{(m)}(q_1), \pi^{(m)}(q_2)) = 0$. Since $\varrho$ is a metric on $\mathcal{N}$, this holds if and only if 
$\pi^{(m)}(q_1) = \pi^{(m)}(q_2)$. By 
Theorem~\ref{thm:moment_matching}, this is equivalent to 
$\mathbb{E}_{q_1}[u_j] = \mathbb{E}_{q_2}[u_j]$ for all 
$j = 1,\ldots,d$. By Proposition~\ref{prop:many_to_one}, 
there exist distinct $q_1 \neq q_2$ satisfying this 
condition, so $\tilde{d}(q_1,q_2) = 0$ does not imply $q_1 = q_2$.
\end{proof}

\section{Embedding of the Parametric Exponential Family 
into the Manifold of SPD Matrices}
\label{sec:spd_embedding}

The $m$-projection established in Section~\ref{sec:projection} 
reduces an arbitrary distribution $q \in \mathcal{E}_p$ to a 
finite-dimensional element $p_{\theta^*} \in \mathcal{N}$, 
identified entirely by the moment vector $\eta^q$. The 
pseudo-metric $\tilde{d}$ on $\mathcal{E}_p$ introduced in 
Subsection~\ref{subsec:pseudo_metric} depends on a metric $\varrho$ on $\mathcal{N}$, which has not yet been specified. The 
present section constructs this metric by embedding $\mathcal{N}$ into $\mathrm{SPD}(d+1)$ via the map $\mathcal{F}$ and pulling back the AIRM established in Subsection~\ref{subsec:spd}.

The embedding maps each $\theta \in \Theta$ to the 
expected outer product of the augmented sufficient statistics 
vector $V(x) = (u_1(x), \ldots, u_d(x), 1)^\top \in 
\mathbb{R}^{d+1}$. The constant function $1$ is appended to 
$(u_1, \ldots, u_d)$ for two reasons. The bottom-right entry 
of the resulting matrix is fixed at $1$ for every $\theta \in \Theta$, which will be used to detect 
when the AIRM geodesic exits the embedded submanifold in 
Subsection~\ref{subsec:geodesic_properties}. The last column 
of the matrix is exactly the mean parameter vector 
$\eta(\theta)$, separating first-moment information from the 
second-moment block and making the block structure of the 
embedding matrix explicit. The resulting $(d+1) \times (d+1)$ 
matrix encodes both the first and second moments of the 
sufficient statistics and is shown to be symmetric positive 
definite, placing it in $\mathrm{SPD}(d+1)$. The full geometry of this two-stage map is illustrated in Figure~\ref{fig:full_embedding_flow}.

\begin{theorem}[SPD Embedding]
\label{thm:spd_embedding}
Let $u_1, \ldots, u_d \in B_p$ be linearly independent in
$L^2(p\cdot\mu)$, and define the augmented vector
\[
V(x) = (u_1(x), \ldots, u_d(x), 1)^\top \in \mathbb{R}^{d+1},
\]
where $V_j(x) = u_j(x)$ for $j = 1,\ldots,d$ and
$V_{d+1}(x) = 1$.
The map $\mathcal{F} \colon \mathcal{N} \to \mathrm{SPD}(d+1)$,
where $\mathcal{F}$ acts on $p_\theta \in \mathcal{N}$ via its
natural parameter $\theta \in \Theta$, defined by
\begin{equation}
\label{eq:embedding_def}
\mathcal{F}(p_\theta) := \mathbb{E}_{p_\theta}
\!\left[V(x)V(x)^\top\right] =: S(\theta)
\end{equation}
is well-defined, and $S(\theta) \in \mathrm{SPD}(d+1)$
for every $\theta \in \Theta$. The matrix $S(\theta)$
has the block structure
\begin{equation}
\label{eq:block_structure}
S(\theta) = \begin{pmatrix}
A(\theta) & \eta(\theta) \\
\eta(\theta)^\top & 1
\end{pmatrix},
\end{equation}
where $[A(\theta)]_{jk} := \mathbb{E}_{p_\theta}[u_j u_k]$
for $j,k = 1,\ldots,d$ is the second moment matrix,
$[\eta(\theta)]_j := \mathbb{E}_{p_\theta}[u_j]$ for
$j = 1,\ldots,d$ is the mean parameter vector introduced 
in~\eqref{eq:mean_params}, and the bottom-right entry equals 
$1$ since $V_{d+1} \equiv 1$.
\end{theorem}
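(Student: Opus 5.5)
The plan has three parts: show that the entries of $S(\theta)$ are finite, read off the block structure, and prove positive definiteness through the Schur complement together with Remark~\ref{rem:second_moment}.

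\emph{Well-definedness.} Fix $\theta\in\Theta$. The entries of $S(\theta)$ are $\mathbb{E}_{p_\theta}[u_ju_k]$, $\mathbb{E}_{p_\theta}[u_j]$ and $\mathbb{E}_{p_\theta}[1]=1$, so it suffices to show $u_ju_k\in L^1(p_\theta\cdot\mu)$. Write $\mathbb{E}_{p_\theta}[u_ju_k]=\mathbb{E}_p[u_ju_k\,e^{\sum_i\theta_iu_i-\psi(\theta)}]$ and apply H\"older with exponents $(r,r')$, $r'>1$. Since $\sum_i\theta_iu_i\in\mathrm{Dom}(K_p)^{\circ}$, there is $r'>1$ with $r'\sum_i\theta_iu_i\in\mathrm{Dom}(K_p)$, which makes the density factor finite in $L^{r'}(p)$. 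Each $u_j\in L^{\Phi_1}(p\cdot\mu)$ has a moment generating function that is finite near $0$ (Proposition~\ref{prop:cramer_orlicz}), so $u_j$ has finite moments of every order under $p$. Cauchy--Schwarz then puts $u_ju_k$ in $L^r(p)$ for every $r$. The same argument handles $\mathbb{E}_{p_\theta}[|u_j|]$. This is the only analytic step and the one I expect to need care: the key point is using openness of $\mathrm{Dom}(K_p)^{\circ}$ to gain the extra integrability exponent. Alternatively, one could cite differentiability of $\psi$ to second order, which gives $\mathbb{E}_{p_\theta}[u_ju_k]=\partial_{jk}\psi+\partial_j\psi\,\partial_k\psi$, finite by~\eqref{eq:mean_params} and~\eqref{eq:fisher_matrix}.

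\emph{Block structure and symmetry.} Since $VV^\top$ is symmetric with entries $V_jV_k$, the expectation $S(\theta)$ is symmetric. Entrywise, the top-left $d\times d$ block is $A(\theta)$. The last column is $\mathbb{E}_{p_\theta}[u_j\cdot1]=\eta_j(\theta)$, using~\eqref{eq:mean_params}. The corner entry is $\mathbb{E}_{p_\theta}[1]=1$ because $p_\theta$ is a probability density. This gives~\eqref{eq:block_structure}.

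\emph{Positive definiteness.} I will take the Schur complement of the corner entry $1>0$. $S(\theta)$ is SPD if and only if $A(\theta)-\eta(\theta)\eta(\theta)^\top\succ0$. By Remark~\ref{rem:second_moment}, this complement equals $I(\theta)$, which is SPD by Definition~\ref{def:fisher_metric} under the linear independence hypothesis.

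To make that last step airtight, I will also give the direct argument. For $(a,b)\in\mathbb{R}^d\times\mathbb{R}$,
\[
(a^\top,b)\,S(\theta)\,(a^\top,b)^\top=\mathbb{E}_{p_\theta}\Bigl[\bigl(\textstyle\sum_ja_ju_j+b\bigr)^2\Bigr]\ge0 .
\]
Equality forces $\sum_ja_ju_j=-b$ almost everywhere with respect to $p_\theta\cdot\mu$, hence $\mu$-a.e. since $p_\theta>0$. Taking $\mathbb{E}_p$ of both sides gives $b=0$, because each $u_j\in B_p$ is centred. Linear independence in $L^2(p\cdot\mu)$ then gives $a=0$. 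So $S(\theta)\in\mathrm{SPD}(d+1)$.
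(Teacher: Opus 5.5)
Your proposal is correct and follows essentially the same route as the paper. The blocks are read off entrywise, and positive definiteness comes from $\mathbf{v}^\top S(\theta)\mathbf{v}=\mathbb{E}_{p_\theta}[(\mathbf{v}^\top V)^2]$, with the centring $\mathbb{E}_p[u_j]=0$ forcing the constant coefficient to vanish and linear independence forcing the rest to vanish. Your H\"older argument for finiteness of the moments, which uses openness of $\mathrm{Dom}(K_p)^{\circ}$, is a useful addition because the paper asserts well-definedness without checking it; the Schur-complement reformulation is a convenient restatement of the same argument rather than a different one.
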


\begin{proof}
The block structure~\eqref{eq:block_structure} follows by
computing each entry of $S(\theta) =
\mathbb{E}_{p_\theta}[V(x)V(x)^\top]$ directly.
For $j, k \in \{1, \ldots, d\}$,
\[
[S(\theta)]_{jk}
= \mathbb{E}_{p_\theta}[V_j(x)V_k(x)]
= \mathbb{E}_{p_\theta}[u_j(x)\,u_k(x)]
= [A(\theta)]_{jk}.
\]
Since $V_{d+1}(x) = 1$ for all $x$, the entries in the
last column satisfy
\[
[S(\theta)]_{j,d+1}
= \mathbb{E}_{p_\theta}[u_j(x) \cdot 1]
= \mathbb{E}_{p_\theta}[u_j(x)]
= [\eta(\theta)]_j,
\]
and $[S(\theta)]_{d+1,j} = [\eta(\theta)]_j$ by symmetry
of the outer product. The bottom-right entry is
$[S(\theta)]_{d+1,d+1} = \mathbb{E}_{p_\theta}[1] = 1$.
This establishes~\eqref{eq:block_structure}.

Symmetry of $S(\theta)$ is immediate since
$[S(\theta)]_{jk} =
\mathbb{E}_{p_\theta}[V_j(x)V_k(x)] =
\mathbb{E}_{p_\theta}[V_k(x)V_j(x)] =
[S(\theta)]_{kj}$.

For positive definiteness, let $\mathbf{v} =
(v_1, \ldots, v_{d+1})^\top \in \mathbb{R}^{d+1}$
be arbitrary. By linearity of expectation,
\begin{align*}
\mathbf{v}^\top S(\theta)\,\mathbf{v}
= \sum_{j,k=1}^{d+1} v_j v_k\,
\mathbb{E}_{p_\theta}[V_j(x)V_k(x)]
= \mathbb{E}_{p_\theta}\!\left[
\left(\sum_{j=1}^{d+1} v_j V_j(x)\right)^{\!2}\right]
= \mathbb{E}_{p_\theta}
\!\left[(\mathbf{v}^\top V(x))^2\right] \geq 0,
\end{align*}
establishing positive semi-definiteness. For strict
positivity, suppose $\mathbf{v}^\top S(\theta)\mathbf{v}
= 0$. Then $\mathbb{E}_{p_\theta}
[(\mathbf{v}^\top V(x))^2] = 0$. Since $p_\theta \in
\mathcal{E}_p$ implies $p_\theta$ is equivalent to $\mu$
(Definition~\ref{def:density_space}), the integrand
being non-negative implies
\[
\mathbf{v}^\top V(x) = v_1 u_1(x) + \cdots + v_d u_d(x) + v_{d+1} = 0 \quad \mu\text{-a.e.}
\]
To show that $\mathbf{v} = \mathbf{0}$, we leverage the centering property of the sufficient statistics. Taking the expectation of both sides with respect to the reference measure $p \cdot \mu$ yields
\[
\sum_{j=1}^d v_j \mathbb{E}_p[u_j] + v_{d+1}\mathbb{E}_p[1] = 0.
\]
Since $u_j \in B_p$, we have $\mathbb{E}_p[u_j] = 0$ for all $j = 1, \ldots, d$ by definition. Because $\mathbb{E}_p[1] = 1$, this simplifies directly to $v_{d+1} = 0$. Substituting this back into the linear combination leaves $\sum_{j=1}^d v_j u_j(x) = 0$ $\mu$-a.e. Since the set $\{u_1, \ldots, u_d\}$ is given to be linearly independent in $L^2(p\cdot\mu)$, it follows that $v_1 = \cdots = v_d = 0$. Thus $\mathbf{v} = \mathbf{0}$, establishing $S(\theta) \succ 0$, 
so $S(\theta) \in \mathrm{SPD}(d+1)$.
\end{proof}

\begin{remark}[Schur Complement Identity]
\label{rem:block_interpretation}
By identity~\eqref{eq:A_I_decomposition}, the Fisher
information matrix $I(\theta)$ is the Schur complement
of the bottom-right entry of $S(\theta)$:
\[
A(\theta) - \eta(\theta)\eta(\theta)^\top = I(\theta).
\]
By the block matrix inversion formula, $S(\theta)^{-1}$
can be expressed in terms of $I(\theta)^{-1}$,
$\eta(\theta)$, and the scalar entry $1$; this expression
is used directly in the proof of
Theorem~\ref{thm:induced_metric}.
\end{remark}

\begin{figure}[htbp]
\centering
\begin{tikzpicture}[scale=1.4, >=Stealth]
    \draw[thick, blue!60!black, fill=blue!5]
        (0,0) coordinate (A)
        to[out=20, in=190] (3,0.2)
        to[out=10, in=260] (3.5,3.5)
        to[out=80, in=350] (2.5,4.5)
        to[out=170, in=20] (0.5,4.2)
        to[out=200, in=110] cycle;
    \node[blue!80!black, font=\large\bfseries]
        at (3.0, 4.0) {$\mathcal{E}_p$};
    \node[blue!60!black, font=\tiny, align=center]
        at (3.0, 3.6) {Infinite-Dimensional\\Maximal Model};
    \draw[thick, red!70!black, fill=red!10]
        (0.8,1.2) coordinate (N1)
        to[out=10, in=170] (2.8,1.4) coordinate (N2)
        to[out=100, in=280] (2.9,2.4) coordinate (N3)
        to[out=190, in=10] (0.9,2.2) coordinate (N4)
        to[out=260, in=80] cycle;
    \node[red!80!black, font=\bfseries]
        at (1.8, 1.8) {$\mathcal{N}$};
    \filldraw[black] (2.2, 3.0) circle (1pt)
        coordinate (q);
    \node[right=2pt of q, font=\scriptsize] {$q$};
    \filldraw[red!70!black] (1.9, 1.6) circle (1pt)
        coordinate (proj);
    \node[below=2pt of proj, font=\tiny,
          red!70!black] {$p_{\theta^*}$};
    \draw[->, dashed, gray, thin] (q) -- (proj);
    \node[gray, font=\tiny, rotate=-75] at (2.2, 2.3)
        {$\pi^{(m)}$};
    \draw[->, ultra thick, purple!70!black]
        (3.6, 2.2) -- (5.4, 2.2)
        node[midway, above, font=\small\bfseries]
        {$\mathcal{F}$};
    \node[purple!70!black, font=\tiny, align=center]
        at (4.5, 1.8)
        {$p_\theta \mapsto
          \mathbb{E}_{p_\theta}[V(x)V(x)^\top]$};
    \begin{scope}[xshift=7cm, yshift=0.5cm]
        \draw[thick, gray!50] (0,0) -- (-1.5,3.5);
        \draw[thick, gray!50] (0,0) -- (1.5,3.5);
        \draw[thick, gray!80, fill=gray!10, opacity=0.5]
            (0,3.5) ellipse (1.5 and 0.4);
        \node[gray!80!black, font=\large\bfseries]
            at (1.4, 3.8) {$\mathrm{SPD}(d+1)$};
        \node[gray!60!black, font=\tiny] at (0, -0.3)
            {$\mathbf{0}$};
        \filldraw (0,0) circle (1pt);
        \draw[thick, purple!80!black, fill=purple!10,
              opacity=0.7]
            (-0.8, 2.2) coordinate (M1)
            to[out=20, in=160] (0.8, 2.3) coordinate (M2)
            to[out=70, in=290] (0.9, 3.0) coordinate (M3)
            to[out=170, in=10] (-0.7, 2.9) coordinate (M4)
            to[out=250, in=110] cycle;
        \node[purple!90!black, font=\bfseries]
            at (0, 2.6) {$\mathcal{M}$};
        \node[purple!70!black, font=\tiny, align=center]
            at (0, 3.2) {Embedded\\Submanifold};
        \node[anchor=north, font=\tiny, gray,
              align=center] at (0, 1.8)
            {AIRM Geometry};
    \end{scope}
\end{tikzpicture}
\caption{The $m$-projection $\pi^{(m)}$ sends
$q \in \mathcal{E}_p$ to its moment-matched element
$p_{\theta^*} \in \mathcal{N}$, identified by its
natural parameter $\theta^* \in \Theta$.
The embedding $\mathcal{F} \colon \mathcal{N} \to
\mathrm{SPD}(d+1)$ then maps $p_{\theta^*}$ to the SPD
matrix $S(\theta^*) = \mathbb{E}_{p_{\theta^*}}
[V(x)V(x)^\top]$. The image
$\mathcal{M} = \mathcal{F}(\mathcal{N}) \subset
\mathrm{SPD}(d+1)$ is a smooth submanifold
within the SPD cone, equipped with the geometry
induced by the AIRM.}
\label{fig:full_embedding_flow}
\end{figure}

The embedding $\mathcal{F}$ has been shown to map 
$\mathcal{N}$ into $\mathrm{SPD}(d+1)$. The next 
theorem establishes that $\mathcal{F}$ preserves the smooth 
structure of $\mathcal{N}$, making it 
a diffeomorphism onto its image $\mathcal{M} = 
\mathcal{F}(\mathcal{N})$.

\begin{theorem}[Diffeomorphism Property]
\label{thm:embedding_diffeomorphism}
The SPD embedding $\mathcal{F} \colon \mathcal{N} \to \mathrm{SPD}(d+1)$ 
is a diffeomorphism onto its image 
$\mathcal{M} := \mathcal{F}(\mathcal{N}) \subset \mathrm{SPD}(d+1)$. 
Specifically,
\begin{enumerate}[label=(\roman*)]
    \item $\mathcal{F}$ is injective.
    \item $\mathcal{F}$ is infinitely differentiable, 
    $\mathcal{F} \in C^\infty(\mathcal{N}, \mathrm{SPD}(d+1))$.
    \item The differential $D\mathcal{F}_{p_\theta} \colon 
    T_{p_\theta}\mathcal{N} \to T_{S(\theta)}\mathrm{SPD}(d+1)$ 
    has full rank $d$ at every $p_\theta \in \mathcal{N}$, 
    so $\mathcal{F}$ is an immersion.
    \item $\mathcal{M} = \mathcal{F}(\mathcal{N})$ is a 
    $d$-dimensional embedded submanifold of 
    $\mathrm{SPD}(d+1)$, and $\mathcal{F} \colon \mathcal{N} \to 
    \mathcal{M}$ is a diffeomorphism.
\end{enumerate}
\end{theorem}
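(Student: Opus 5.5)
We treat $\mathcal{F}$ through the global chart $p_\theta \mapsto \theta$ of Subsection~\ref{subsubsec:parametric_submanifolds}, so that it is the map $\Theta \ni \theta \mapsto S(\theta)$ with the block structure~\eqref{eq:block_structure}, and prove (i)--(iv) in order.

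For \textbf{injectivity}, suppose $S(\theta) = S(\theta')$. Reading off the last column gives $\eta(\theta) = \eta(\theta') = \nabla\psi(\theta) = \nabla\psi(\theta')$. The map $\nabla\psi$ is injective on $\Theta$: $\Theta$ is convex, since it is the preimage under a linear map of the convex set $\mathrm{Dom}(K_p)^\circ$, and $\psi$ is strictly convex there because $\nabla^2\psi = I(\theta) \succ 0$ (Definition~\ref{def:fisher_metric}). Explicitly, $(\theta-\theta')^\top(\nabla\psi(\theta)-\nabla\psi(\theta')) = \int_0^1 (\theta-\theta')^\top I(\theta'+t(\theta-\theta'))(\theta-\theta')\,dt > 0$ unless $\theta=\theta'$. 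This is the same strict-convexity argument used for uniqueness in Theorem~\ref{thm:moment_matching}. Hence $\theta = \theta'$, and only the $\eta$-block of $S$ is needed.

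For \textbf{smoothness}, note that every entry of $S(\theta)$ is a derivative of $\psi$ or $e^{\psi}$. Specifically, $\eta_j = \partial_j\psi$, and $A_{jk} = \mathbb{E}_{p_\theta}[u_ju_k] = e^{-\psi}\partial_j\partial_k e^{\psi} = \partial_j\partial_k\psi + \partial_j\psi\,\partial_k\psi$, which matches~\eqref{eq:A_I_decomposition}. It therefore suffices that $\psi(\theta) = K_p(\sum_j\theta_ju_j)$ is $C^\infty$ on the open set $\Theta$. This follows because $K_p$ is $C^\infty$ (indeed analytic) on $\mathrm{Dom}(K_p)^\circ$, the result of Pistone--Sempi and Pistone--Rogantin already used to justify~\eqref{eq:mean_params} and~\eqref{eq:fisher_matrix}, composed with a linear map. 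Finite-dimensionally, one can instead differentiate under the integral sign, which is legitimate since $\theta$ is interior to the domain of the moment generating function of $(u_1,\dots,u_d)$, so all polynomial moments are locally uniformly integrable.

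For the \textbf{immersion} property, observe that $\partial_{\theta_k} S(\theta)$ has last column $(\partial_k\eta, 0)$, and $\partial_k \eta = I(\theta) e_k$. Suppose $\sum_k c_k\,\partial_k S(\theta) = 0$. Its last column then gives $I(\theta)c = 0$, and since $I(\theta)\succ 0$ this forces $c=0$. So $D\mathcal{F}$ has rank $d$, because it already has rank $d$ after projecting onto the $\eta$-block.

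\textbf{Embeddedness} is the step I expect to be the main obstacle, since an injective immersion need not be a homeomorphism onto its image. The plan is to exhibit a continuous left inverse on $\mathcal{M}$. Let $P\colon \mathrm{SPD}(d+1)\to\mathbb{R}^d$ extract the last column's first $d$ entries; this map is smooth on the ambient space. Then $P\circ\mathcal{F} = \nabla\psi$. By strict convexity and the inverse function theorem (the Jacobian $I(\theta)$ is invertible), $\nabla\psi$ is a diffeomorphism from $\Theta$ onto the open set $\mathcal{H}_\eta$. Hence $(\nabla\psi)^{-1}\circ P|_{\mathcal{M}}$ is a continuous inverse of $\mathcal{F}$, so $\mathcal{F}$ is a topological embedding. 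An injective immersion that is a homeomorphism onto its image has as image an embedded $d$-dimensional submanifold, and $\mathcal{F}\colon\mathcal{N}\to\mathcal{M}$ is a diffeomorphism. Equivalently, $\mathcal{M}$ is the graph of the smooth map $\eta\mapsto A((\nabla\psi)^{-1}(\eta))$ over the open set $\mathcal{H}_\eta$, with the bottom-right entry fixed at $1$, which makes embeddedness transparent.
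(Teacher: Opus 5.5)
Your proposal is correct and follows essentially the same route as the paper's proof: injectivity from the last column plus strict monotonicity of $\nabla\psi$, smoothness from $\eta = \nabla\psi$ and $A = \nabla^2\psi + \nabla\psi\,\nabla\psi^\top$, the immersion property from the last column of $D\mathcal{F}(c)$ forcing $I(\theta)c = 0$, and embeddedness via the left inverse $(\nabla\psi)^{-1}\circ\rho$ that reads off the last column. Two of your additions are small but worthwhile tightenings: you explicitly check that $\Theta$ is convex, which the strict-monotonicity step needs and the paper leaves implicit, and you describe $\mathcal{M}$ as a graph over $\mathcal{H}_\eta$.
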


\begin{proof}
    \textbf{(i).}
Suppose $S(\theta_1) = S(\theta_2)$ for some
$\theta_1, \theta_2 \in \Theta$. By the block
structure~\eqref{eq:block_structure}, equality of
the last columns gives $\eta(\theta_1) = \eta(\theta_2)$,
that is, $\nabla\psi(\theta_1) = \nabla\psi(\theta_2)$
by~\eqref{eq:mean_params}. Since $\nabla^2\psi(\theta)
= I(\theta) \succ 0$ for all $\theta \in \Theta$
by~\eqref{eq:fisher_matrix}, the function $\psi$ is
strictly convex on $\Theta$, so $\nabla\psi$ is strictly
monotone and hence injective. Therefore $\theta_1 = \theta_2$.

\textbf{(ii).}
It suffices to show that every entry of $S(\theta)$
is $C^\infty$ in $\theta$. By the block
structure~\eqref{eq:block_structure}, there are three
distinct blocks to consider. The bottom-right entry is the constant $1$, which is
trivially $C^\infty$. The last column and row entries are
$\eta_j(\theta) = \partial\psi/\partial\theta_j$
by~\eqref{eq:mean_params}, which are $C^\infty$
since $\psi \in C^\infty(\Theta)$~\cite{r9}. For the top-left block, the covariance identity gives
$[A(\theta)]_{jk} = [I(\theta)]_{jk} +
\eta_j(\theta)\eta_k(\theta)$.
Substituting~\eqref{eq:fisher_matrix}
and~\eqref{eq:mean_params} yields
\begin{equation}
\label{eq:A_smooth}
[A(\theta)]_{jk} =
\frac{\partial^2\psi}{\partial\theta_j\partial\theta_k}
(\theta) +
\frac{\partial\psi}{\partial\theta_j}(\theta)\,
\frac{\partial\psi}{\partial\theta_k}(\theta),
\end{equation}
a sum of a second partial derivative and a product of
first partial derivatives of $\psi$. Since
$\psi \in C^\infty(\Theta)$, each term
in~\eqref{eq:A_smooth} is $C^\infty$, and so is
their sum. Since all entries of $S(\theta)$ are
$C^\infty$, it follows that
$\mathcal{F} \in C^\infty(\mathcal{N}, \mathrm{SPD}(d+1))$.

\textbf{(iii).}
To prove that $\mathcal{F}$ is an immersion, we must show that
its differential $D\mathcal{F}_{p_{\theta}}$ is injective at every
$p_{\theta} \in \mathcal{N}$. The differential acts on a tangent vector
$v = (v_1, \ldots, v_d)^\top \in \mathbb{R}^d$ by
\begin{equation}
\label{eq:differential_action}
D\mathcal{F}_\theta(v) = \sum_{k=1}^d v_k
\frac{\partial S}{\partial\theta_k}(\theta)
\in \mathrm{Sym}(d+1).
\end{equation}
Suppose $D\mathcal{F}_{p_\theta}(v) = 0$. Since every entry of this
zero matrix must vanish, extracting the first $d$ entries of the
last column corresponding to the $(j, d+1)$ entries for
$j = 1, \ldots, d$ in the block
structure~\eqref{eq:block_structure} yields
\begin{equation}
\label{eq:last_col_condition}
\sum_{k=1}^d v_k
\frac{\partial\eta_j}{\partial\theta_k}(\theta) = 0,
\quad j = 1, \ldots, d.
\end{equation}
Since $\eta_j(\theta) = \frac{\partial\psi}{\partial\theta_j}(\theta)$
by~\eqref{eq:mean_params}, differentiating both sides with respect
to $\theta_k$ and invoking~\eqref{eq:fisher_matrix} gives
$\frac{\partial\eta_j}{\partial\theta_k}(\theta) = [I(\theta)]_{jk}$.
Substituting this identity into~\eqref{eq:last_col_condition} yields
$I(\theta)v = 0$. Since $I(\theta) \succ 0$
by~\eqref{eq:fisher_matrix}, the Fisher information matrix is
invertible, so $v = 0$. Consequently,
$\ker(D\mathcal{F}_\theta) = \{0\}$, proving injectivity.

\textbf{(iv).}
Parts~(i)--(iii) give that $\mathcal{F}\colon\mathcal{N}\to\mathcal{M}$
is an injective $C^\infty$ immersion, so a set-theoretic inverse
$\mathcal{F}^{-1}\colon\mathcal{M}\to\mathcal{N}$ exists. It remains
to show it is $C^\infty$.

Define $\rho\colon\mathcal{M}\to\mathcal{H}_{\eta}$ by extracting the
first $d$ entries of the last column of $S$,
\begin{equation}
\rho(S) := \bigl([S]_{1,d+1}, \ldots,
[S]_{d,d+1}\bigr)^\top \in \mathbb{R}^d.
\end{equation}
This is the restriction of a linear map on $\mathrm{Sym}(d+1)$, hence $C^\infty$.
For any
$S = S(\theta)\in\mathcal{M}$, the block
structure~\eqref{eq:block_structure} gives
$\rho(S) = \eta(\theta) = \nabla\psi(\theta)\in\mathcal{H}_{\eta}$.

Define the inverse as the composition
\begin{equation}
\mathcal{F}^{-1} := (\nabla\psi)^{-1}\circ\,\rho
\;\colon\; \mathcal{M} \to \mathcal{N}.
\end{equation}
The map $\nabla\psi\colon\Theta\to\mathcal{H}_{\eta}$ is $C^\infty$
and strictly monotone since $\nabla^2\psi(\theta) =
I(\theta)\succ 0$ for all $\theta\in\Theta$
by~\eqref{eq:fisher_matrix}, making it a $C^\infty$ bijection with everywhere
invertible Jacobian $I(\theta)$, so its inverse
$(\nabla\psi)^{-1}\colon\mathcal{H}_{\eta}\to\Theta$
is also $C^\infty$.

To verify $\mathcal{F}^{-1}$ inverts $\mathcal{F}$,
take any $\theta\in\Theta$,
\begin{align}
\mathcal{F}^{-1}(\mathcal{F}(\theta))
&= (\nabla\psi)^{-1}(\rho(S(\theta)))
&&\text{definition of } \mathcal{F}^{-1} \nonumber\\
&= (\nabla\psi)^{-1}(\eta(\theta))
&&\rho(S(\theta)) = \eta(\theta)     \text{ by block structure~\eqref{eq:block_structure}}
\nonumber\\
&= (\nabla\psi)^{-1}(\nabla\psi(\theta))
&&\eta(\theta) = \nabla\psi(\theta)
\text{ by~\eqref{eq:mean_params}} \nonumber\\
&= \theta.
&&(\nabla\psi)^{-1} \text{ inverts } \nabla\psi
\end{align}
Therefore $\mathcal{F}\colon\mathcal{N}\to\mathcal{M}$ is a
$C^\infty$ diffeomorphism, and $\mathcal{M} =
\mathcal{F}(\mathcal{N})$ is a $d$-dimensional embedded
submanifold of $\mathrm{SPD}(d+1)$.

\end{proof}

\subsection{The Induced Metric Structure}
\label{subsec:induced_metric}

The diffeomorphism $\mathcal{F}\colon\mathcal{N}\to\mathcal{M}$
established in Theorem~\ref{thm:embedding_diffeomorphism}
allows the AIRM on $\mathrm{SPD}(d+1)$
(Subsection~\ref{subsec:spd}) to be pulled back to a Riemannian metric on $\Theta$.

\begin{definition}[Pullback Metric]
\label{def:pullback_metric}
The {pullback metric} $g^\mathcal{F}$ on $\Theta$ induced 
by the AIRM~\eqref{eq:airm} via $\mathcal{F}$ is defined at 
$\theta \in \Theta$ by
\begin{equation}
g^{\mathcal{F}}_{jk}(\theta) = \frac{1}{2}\,\mathrm{tr}\!\left( 
S(\theta)^{-1} \frac{\partial S}{\partial\theta_j} 
S(\theta)^{-1} \frac{\partial S}{\partial\theta_k} 
\right), \quad j, k = 1, \ldots, d,
\end{equation}
where $S(\theta) = \mathcal{F}(\theta)$. The induced line 
element on $\mathcal{M} = \mathcal{F}(\mathcal{N})$ is
\begin{equation}
\label{eq:pullback_line_element}
ds^2_{\mathcal{M}} = \sum_{j,k=1}^d g^\mathcal{F}_{jk}(\theta) 
\, d\theta_j \, d\theta_k = \frac{1}{2}\,\mathrm{tr}
\!\left((S^{-1}\,dS)^2\right).
\end{equation}
\end{definition}

\begin{theorem}[Pullback Metric Formula]
\label{thm:induced_metric}
The differentials of the blocks of $S(\theta)$
with respect to $\theta_j$ are
\begin{align}
\left[\frac{\partial A}{\partial\theta_j}\right]_{lm}
&= \mathbb{E}_{p_\theta}[u_j u_l u_m]
- \mathbb{E}_{p_\theta}[u_j]\,
\mathbb{E}_{p_\theta}[u_l u_m],
\label{eq:dA_formula}\\
\frac{\partial\eta}{\partial\theta_j}
&= I(\theta)\,\mathbf{e}_j,
\label{eq:deta_formula}
\end{align}
where $\mathbf{e}_j$ is the $j$-th standard basis
vector in $\mathbb{R}^d$, so that
$d\eta = I(\theta)\,d\theta$. The pullback metric
$g^{\mathcal{F}}$ on $\Theta$
(Definition~\ref{def:pullback_metric}) has line element
\begin{equation}
\label{eq:induced_metric_formula}
ds^2_{\mathcal{M}} = \frac{1}{2}\,\mathrm{tr}
(I^{-1}\,dA\,I^{-1}\,dA)
- 2\,\eta^\top I^{-1}\,dA\,I^{-1}\,d\eta
+ (\eta^\top I^{-1}\,d\eta)^2
+ \gamma\,(d\eta^\top I^{-1}\,d\eta),
\end{equation}
where $\gamma = 1 + \eta^\top I^{-1}\eta$ and
$dA = \sum_j \frac{\partial A}{\partial\theta_j}d\theta_j$.
\end{theorem}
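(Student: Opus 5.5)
The plan is a direct computation in three stages. First derive the derivative formulas \eqref{eq:dA_formula}--\eqref{eq:deta_formula}. Then write down $S(\theta)^{-1}$ explicitly using the Schur complement identity of Remark~\ref{rem:block_interpretation}. Finally expand $\tfrac12\mathrm{tr}\bigl((S^{-1}dS)^2\bigr)$ blockwise.

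\textbf{Derivative formulas.} From \eqref{eq:param_model}, $\log p_\theta = \sum_j\theta_j u_j - \psi(\theta) + \log p$. Using \eqref{eq:mean_params}, this gives $\partial_{\theta_j} p_\theta = (u_j - \eta_j(\theta))\,p_\theta$. Differentiation under the integral sign is justified exactly as for \eqref{eq:mean_params}: $\theta$ ranges in the open set $\Theta$, where the relevant exponential moments are finite, so products such as $u_ju_lu_m$ are integrable and dominated locally (cf.\ Pistone and Rogantin~\cite{r12}). Applying this to $\mathbb{E}_{p_\theta}[u_lu_m]$ gives \eqref{eq:dA_formula} at once. For \eqref{eq:deta_formula}, differentiate $\eta = \nabla\psi$ and invoke \eqref{eq:fisher_matrix}, as already done in part (iii) of Theorem~\ref{thm:embedding_diffeomorphism}. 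This yields $\partial\eta/\partial\theta_j = I(\theta)\mathbf e_j$.

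\textbf{Inverse of $S$ and block form of $S^{-1}dS$.} By Remark~\ref{rem:block_interpretation}, $I = A - \eta\eta^\top$ is the Schur complement of the entry $1$. The block inversion formula then gives
\[
S^{-1} = \begin{pmatrix} I^{-1} & -I^{-1}\eta\\ -\eta^\top I^{-1} & \gamma\end{pmatrix},\qquad \gamma = 1+\eta^\top I^{-1}\eta,
\]
which one verifies by multiplying by $S$. Since the corner entry of $S$ is constant, $dS = \begin{pmatrix} dA & d\eta\\ d\eta^\top & 0\end{pmatrix}$. Multiplying gives $S^{-1}dS = \begin{pmatrix} P & b\\ c^\top & s\end{pmatrix}$ with
\[
P = I^{-1}dA - I^{-1}\eta\,d\eta^\top,\qquad b = I^{-1}d\eta,
\]
\[
c^\top = -\eta^\top I^{-1}dA + \gamma\,d\eta^\top,\qquad s = -\eta^\top I^{-1}d\eta .
\]

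\textbf{Trace expansion.} For any such block matrix, $\mathrm{tr}(M^2) = \mathrm{tr}(P^2) + 2c^\top b + s^2$. Each term is then expanded using cyclicity of the trace and symmetry of $I^{-1}$ and $dA$:
\[
\mathrm{tr}(P^2) = \mathrm{tr}(I^{-1}dA\,I^{-1}dA) - 2\,\eta^\top I^{-1}dA\,I^{-1}d\eta + (\eta^\top I^{-1}d\eta)^2 ,
\]
\[
c^\top b = -\eta^\top I^{-1}dA\,I^{-1}d\eta + \gamma\, d\eta^\top I^{-1}d\eta ,
\]
\[
s^2 = (\eta^\top I^{-1}d\eta)^2 .
\]
Summing, the cross term has coefficient $-4$, the squared term has coefficient $2$, and the $\gamma$-term has coefficient $2$. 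Halving, as in Definition~\ref{def:pullback_metric}, reproduces \eqref{eq:induced_metric_formula}.

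The main obstacle I expect is bookkeeping rather than conceptual. One must keep careful track of the rank-one term $I^{-1}\eta\,d\eta^\top$ inside $P$, and of the transposes that turn $d\eta^\top I^{-1}dA\,I^{-1}\eta$ into $\eta^\top I^{-1}dA\,I^{-1}d\eta$. Getting this right is what makes the two cross-term contributions combine into the single coefficient $-2$ after halving. The analytic justification for differentiating third moments under the integral is the only non-algebraic point, and I would cite the Orlicz-space integrability of Subsection~\ref{subsec:orlicz_cramer} for it.
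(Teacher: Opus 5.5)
Your proposal is correct and follows essentially the same route as the paper: the score identity for the derivative formulas, the Schur-complement block inverse of $S$, the block form of $S^{-1}dS$ with $\mathrm{tr}\bigl((S^{-1}dS)^2\bigr) = \mathrm{tr}(P^2) + 2c^\top b + s^2$, and termwise expansion followed by halving. The only cosmetic difference is that you get $\partial\eta/\partial\theta_j = I(\theta)\mathbf{e}_j$ from $\eta = \nabla\psi$ and $I = \nabla^2\psi$, whereas the paper applies the score identity with $f = u_l$ and then uses $I = A - \eta\eta^\top$.
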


\begin{proof}
By identity~\eqref{eq:A_I_decomposition}, the Schur
complement of the bottom-right entry of $S(\theta)$
is $I(\theta)$, so the block inversion formula gives
\begin{equation}
\label{eq:S_inverse_block}
S^{-1} = \begin{pmatrix}
I^{-1} & -I^{-1}\eta \\
-\eta^\top I^{-1} & 1 + \eta^\top I^{-1}\eta
\end{pmatrix}.
\end{equation}
Setting $G := I^{-1}$ and $\gamma := 1 + \eta^\top
G\eta$, the score identity
\begin{equation}
\label{eq:score_identity}
\frac{\partial}{\partial\theta_j}
\mathbb{E}_{p_\theta}[f]
= \mathbb{E}_{p_\theta}[u_j f] -
\mathbb{E}_{p_\theta}[u_j]\mathbb{E}_{p_\theta}[f]
\end{equation}
applied with $f = u_l u_m$
gives~\eqref{eq:dA_formula}, and applied with
$f = u_l$ gives
\[
\frac{\partial\eta_l}{\partial\theta_j}
= [A(\theta)]_{jl} - \eta_j\eta_l
= [I(\theta)]_{lj},
\]
where the last equality uses $I = A - \eta\eta^\top$
entry-wise, so $\partial\eta/\partial\theta_j =
I(\theta)\,\mathbf{e}_j$,
establishing~\eqref{eq:deta_formula} and
$d\eta = I(\theta)\,d\theta$.

The differential of $S(\theta)$ is
\begin{equation}
\label{eq:dS_block}
dS = \begin{pmatrix} dA & d\eta \\
d\eta^\top & 0 \end{pmatrix},
\end{equation}
since $[S]_{d+1,d+1} = 1$ is constant. The block
product $S^{-1}dS$ is
\begin{align}
S^{-1}dS
&= \begin{pmatrix} G & -G\eta \\
-\eta^\top G & \gamma \end{pmatrix}
\begin{pmatrix} dA & d\eta \\
d\eta^\top & 0 \end{pmatrix} \nonumber \\
&= \begin{pmatrix}
G\,dA - G\eta\,d\eta^\top & G\,d\eta \\
-\eta^\top G\,dA + \gamma\,d\eta^\top &
-\eta^\top G\,d\eta
\end{pmatrix} =:
\begin{pmatrix} P & Q \\ R & s \end{pmatrix},
\label{eq:Sinv_dS}
\end{align}
where $P = G\,dA - G\eta\,d\eta^\top \in
\mathbb{R}^{d\times d}$,
$Q = G\,d\eta \in \mathbb{R}^{d\times 1}$,
$R = -\eta^\top G\,dA + \gamma\,d\eta^\top \in
\mathbb{R}^{1\times d}$, and
$s = -\eta^\top G\,d\eta \in \mathbb{R}$.

Squaring and taking the trace, using
$\mathrm{tr}(QR) = \mathrm{tr}(RQ)$ by the cyclic
property, and $\mathrm{tr}(RQ) = RQ$ since
$RQ \in \mathbb{R}$ is a scalar,
\begin{equation}
\label{eq:trace_formula}
\mathrm{tr}\!\left((S^{-1}dS)^2\right) =
\mathrm{tr}(P^2) + 2\,RQ + s^2.
\end{equation}
Expanding $P^2 = (G\,dA - G\eta\,d\eta^\top)^2$
and applying the cyclic trace property
with symmetry of $G$ and $dA$,
\begin{equation}
\label{eq:trP2_final}
\mathrm{tr}(P^2)
= \mathrm{tr}(G\,dA\,G\,dA)
- 2\,d\eta^\top G\,dA\,G\eta
+ (d\eta^\top G\eta)^2.
\end{equation}
Substituting $Q = G\,d\eta$ and
$R = -\eta^\top G\,dA + \gamma\,d\eta^\top$,
\begin{equation}
\label{eq:trQR_final}
RQ = -\eta^\top G\,dA\,G\,d\eta
+ \gamma\,d\eta^\top G\,d\eta,
\end{equation}
and $s^2 = (\eta^\top G\,d\eta)^2$.
Substituting into~\eqref{eq:trace_formula} and
using the fact that both sides of
$d\eta^\top G\,dA\,G\eta = \eta^\top G\,dA\,G\,d\eta$
are scalars related by transposition and hence equal,
and similarly $(d\eta^\top G\eta)^2 =
(\eta^\top G\,d\eta)^2$,
\begin{equation}
\label{eq:trace_simplified}
\mathrm{tr}\!\left((S^{-1}dS)^2\right)
= \mathrm{tr}(G\,dA\,G\,dA)
- 4\,\eta^\top G\,dA\,G\,d\eta
+ 2(\eta^\top G\,d\eta)^2
+ 2\gamma\,d\eta^\top G\,d\eta.
\end{equation}
Dividing by $2$ and substituting $G = I^{-1}$
gives~\eqref{eq:induced_metric_formula}.
\end{proof}

\begin{remark}
\label{rem:fisher_relationship}
Since $d\eta = I(\theta)\,d\theta$
by~\eqref{eq:deta_formula}, the last term
in~\eqref{eq:induced_metric_formula} satisfies
\begin{align*}
d\eta^\top I(\theta)^{-1}\,d\eta
&= (I(\theta)\,d\theta)^\top I(\theta)^{-1}
(I(\theta)\,d\theta) \\
&= d\theta^\top I(\theta)\,d\theta
= ds^2_{\mathrm{Fisher}},
\end{align*}
where the Fisher--Rao line element on $\Theta$ is
defined as
\[
ds^2_{\mathrm{Fisher}} :=
d\theta^\top I(\theta)\,d\theta
= \sum_{j,k=1}^d I_{jk}(\theta)\,
d\theta_j\,d\theta_k.
\]
Substituting into~\eqref{eq:induced_metric_formula}
gives
\begin{equation}
\label{eq:ds2_vs_fisher}
ds^2_{\mathcal{M}} =
\gamma\,ds^2_{\mathrm{Fisher}}
+ \tfrac{1}{2}\,\mathrm{tr}(I^{-1}dA\,I^{-1}dA)
- 2\,\eta^\top I^{-1}dA\,I^{-1}d\eta
+ (\eta^\top I^{-1}d\eta)^2,
\end{equation}
where $\gamma = 1 + \eta^\top I^{-1}\eta \geq 1$.
The correction beyond $\gamma\,ds^2_{\mathrm{Fisher}}$
comprises two terms involving $dA$, whose entries
by~\eqref{eq:dA_formula} encode third-order moments
of the sufficient statistics, and term $(\eta^\top I^{-1}d\eta)^2 = 
(\eta^\top d\theta)^2$ depends on 
the first-moment and parameter differentials.

\end{remark}

\begin{lemma}
\label{lem:kappa3_cancellation}
If $\kappa_{jlm}(\theta) :=
\mathbb{E}_{p_\theta}[(u_j - \eta_j)(u_l - \eta_l)
(u_m - \eta_m)] = 0$ for all $j, l, m$ and all
$\theta \in \Theta$, then
$ds^2_{\mathcal{M}} = ds^2_{\mathrm{Fisher}}$.
\end{lemma}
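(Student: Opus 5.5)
The plan is to substitute the hypothesis $\kappa_{jlm}\equiv 0$ into the block differential $dA$ given by~\eqref{eq:dA_formula}. This should collapse $dA$ to a rank-two expression built from $\eta$ and $d\eta$. I would then insert it into the line element~\eqref{eq:induced_metric_formula} of Theorem~\ref{thm:induced_metric} and check that everything except $d\eta^\top I^{-1} d\eta$ cancels. By Remark~\ref{rem:fisher_relationship}, that surviving term equals $ds^2_{\mathrm{Fisher}}$.

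\textbf{Step 1 (cumulant expansion).} Write $u_j = (u_j-\eta_j)+\eta_j$ and expand $\mathbb{E}_{p_\theta}[u_ju_lu_m]$. The terms linear in centred variables vanish, and the centred second moments are $I_{jl}$ by~\eqref{eq:fisher_matrix}. This gives
\[
\mathbb{E}_{p_\theta}[u_ju_lu_m] = \kappa_{jlm} + \eta_j I_{lm} + \eta_l I_{jm} + \eta_m I_{jl} + \eta_j\eta_l\eta_m .
\]
Combining this with $\mathbb{E}_{p_\theta}[u_lu_m] = I_{lm}+\eta_l\eta_m$ from~\eqref{eq:A_I_decomposition}, formula~\eqref{eq:dA_formula} becomes $[\partial A/\partial\theta_j]_{lm} = \kappa_{jlm} + \eta_l I_{mj} + \eta_m I_{lj}$. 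Under the hypothesis, and using $d\eta = I\,d\theta$ from~\eqref{eq:deta_formula}, this reads
\[
dA = d\eta\,\eta^\top + \eta\, d\eta^\top .
\]

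\textbf{Step 2 (substitution).} Set $G=I^{-1}$ and introduce the scalars $a=\eta^\top G\,d\eta$, $b=\eta^\top G\eta$ and $c=d\eta^\top G\,d\eta$, so that $\gamma = 1+b$. By cyclicity of the trace and symmetry of $G$, each term of~\eqref{eq:induced_metric_formula} evaluates as follows:
\[
\tfrac12\mathrm{tr}(G\,dA\,G\,dA)=a^2+bc, \qquad -2\,\eta^\top G\,dA\,G\,d\eta = -2(a^2+bc),
\]
\[
(\eta^\top G\,d\eta)^2=a^2, \qquad \gamma\, c=(1+b)c .
\]
Summing the four terms gives $c$, which equals $d\theta^\top I\, d\theta = ds^2_{\mathrm{Fisher}}$.

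The main risk is bookkeeping in Step 1. One must confirm that the $\eta_j$-dependent terms cancel exactly against $\eta_j\mathbb{E}[u_lu_m]$, leaving only the symmetric pair $\eta_l I_{mj}+\eta_m I_{lj}$. One must also keep the indices straight when contracting with $d\theta_j$ to recognise $d\eta$. Once $dA$ is in rank-two form, the trace computation in Step 2 is routine.
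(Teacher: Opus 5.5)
Your proposal is correct and follows essentially the same route as the paper: the same cumulant expansion giving $[\partial A/\partial\theta_j]_{lm} = \kappa_{jlm} + \eta_l I_{jm} + \eta_m I_{jl}$, the same rank-two form $dA = d\eta\,\eta^\top + \eta\,d\eta^\top$, and the same term-by-term trace evaluation, with your scalars $a,b,c$ playing the roles of the paper's $\beta,\alpha,\delta$. The only cosmetic difference is that you sum all four terms of~\eqref{eq:induced_metric_formula} directly to get $c$, whereas the paper works from~\eqref{eq:ds2_vs_fisher}, sums the three corrections to $-(\gamma-1)\delta$, and cancels this against $\gamma\,\delta$.
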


\begin{proof}
Expanding the product in $\kappa_{jlm}$ and taking
expectations, using $[A(\theta)]_{jl} =
[I(\theta)]_{jl} + \eta_j\eta_l$
from~\eqref{eq:A_I_decomposition}, gives
\begin{equation}
\label{eq:kappa_expansion}
\kappa_{jlm} = \mathbb{E}_{p_\theta}[u_j u_l u_m]
- \eta_m I_{jl} - \eta_l I_{jm}
- \eta_j I_{lm} - \eta_j\eta_l\eta_m.
\end{equation}
Subtracting $\eta_j\mathbb{E}_{p_\theta}[u_l u_m]
= \eta_j(I_{lm} + \eta_l\eta_m)$ from
$\mathbb{E}_{p_\theta}[u_j u_l u_m]$ as
in~\eqref{eq:dA_formula} and
substituting~\eqref{eq:kappa_expansion} gives
\begin{equation}
\label{eq:cov_expansion}
\left[\frac{\partial A}{\partial\theta_j}
\right]_{lm}
= \kappa_{jlm}(\theta)
+ \eta_m(\theta)\,I_{jl}(\theta)
+ \eta_l(\theta)\,I_{jm}(\theta).
\end{equation}
When $\kappa_{jlm} = 0$, summing over $j$ with
weights $d\theta_j$ and using symmetry of $I(\theta)$
together with $d\eta = I(\theta)\,d\theta$
from~\eqref{eq:deta_formula},
\begin{equation}
\label{eq:dA_kappa0}
[dA]_{lm}
= \sum_j (\eta_m I_{jl} + \eta_l I_{jm})d\theta_j
= \eta_m[d\eta]_l + \eta_l[d\eta]_m,
\end{equation}
which in matrix form is
$dA = d\eta\,\eta^\top + \eta\,d\eta^\top$,
the differential of the outer product $\eta\eta^\top$
by the product rule.

Setting $G = I^{-1}$,
$\alpha = \eta^\top G\eta$ (so that $\gamma = 1 + \alpha$),
$\beta = \eta^\top G\,d\eta$, and
$\delta = d\eta^\top G\,d\eta =
ds^2_{\mathrm{Fisher}}$, and noting
$G\,dA = G\,d\eta\,\eta^\top + G\eta\,d\eta^\top$,
we evaluate the three correction terms
of~\eqref{eq:ds2_vs_fisher} in turn.

For the first term, expanding
$\mathrm{tr}(G\,dA\,G\,dA)$ into four summands
and applying the cyclic trace property to each,
\begin{align*}
\mathrm{tr}(G\,dA\,G\,dA)
&= \mathrm{tr}(G\,d\eta\,\beta\,\eta^\top)
+ \mathrm{tr}(G\,d\eta\,\alpha\,d\eta^\top)
+ \mathrm{tr}(G\eta\,\delta\,\eta^\top)
+ \mathrm{tr}(G\eta\,\beta\,d\eta^\top) \\
&= \beta^2 + \alpha\delta + \alpha\delta + \beta^2
= 2\beta^2 + 2\alpha\delta,
\end{align*}
so $\tfrac{1}{2}\,\mathrm{tr}(G\,dA\,G\,dA)
= \beta^2 + \alpha\delta$.

For the second term, since
$G\,dA\,G\,d\eta = \beta\,G\,d\eta
+ \delta\,G\eta$,
\[
\eta^\top G\,dA\,G\,d\eta
= \beta\,\underbrace{\eta^\top G\,d\eta}_{\beta}
+ \delta\,\underbrace{\eta^\top G\eta}_{\alpha}
= \beta^2 + \alpha\delta,
\]
so $-2\,\eta^\top G\,dA\,G\,d\eta
= -2(\beta^2 + \alpha\delta)$.

For the third term, $(\eta^\top G\,d\eta)^2
= \beta^2$.

Summing the three corrections,
\[
(\beta^2 + \alpha\delta)
- 2(\beta^2 + \alpha\delta)
+ \beta^2
= -\alpha\delta
= -(\gamma - 1)\,\delta.
\]
Substituting into~\eqref{eq:ds2_vs_fisher},
\[
ds^2_{\mathcal{M}}
= \gamma\,\delta - (\gamma - 1)\,\delta
= \delta
= ds^2_{\mathrm{Fisher}}.
\]
\end{proof}

\subsection{The Isometric Embedding Property}
\label{subsec:isometry}

By construction of the pullback metric
(Definition~\ref{def:pullback_metric}),
$\mathcal{F}$ is an isometry from
$(\mathcal{N}, g^{\mathcal{F}})$ onto
$(\mathcal{M}, \mathrm{AIRM}|_{\mathcal{M}})$.
For all $p_\theta \in \mathcal{N}$ and
$v, w \in T_{p_\theta}\mathcal{N} \cong \mathbb{R}^d$,
\begin{equation}
\label{eq:isometry_condition}
g^{\mathcal{F}}_{p_\theta}(v, w) =
\langle D\mathcal{F}_{p_\theta}(v),
D\mathcal{F}_{p_\theta}(w)
\rangle_{S(\theta)}^{\mathrm{AIRM}}.
\end{equation}

\begin{remark}[Calvo--Oller Decomposition]
\label{rem:calvo_isometry}
Calvo and Oller~\cite{r25} established that for the
multivariate Gaussian family
\begin{equation}
\label{eq:calvo_oller_isometry}
(\mathcal{N}, I(\theta)) \cong
(\mathcal{M}, \mathrm{AIRM}|_{\mathcal{M}}).
\end{equation}
Within the present framework this isometry factors as
\begin{equation}
\label{eq:isometry_decomposition}
(\mathcal{N}, I(\theta))
\xrightarrow{\;\mathrm{id}\;}
(\mathcal{N}, g^{\mathcal{F}})
\xrightarrow{\;\mathcal{F}\;}
(\mathcal{M}, \mathrm{AIRM}|_{\mathcal{M}}),
\end{equation}
where the second map is an isometry for every
exponential family by~\eqref{eq:isometry_condition},
and the first map is an isometry if and only if
$g^{\mathcal{F}} = I(\theta)$. By
Lemma~\ref{lem:kappa3_cancellation}, this holds
when $\kappa_{jlm}(\theta) = 0$ for all $j,l,m$.
For the Gaussian family with linear sufficient
statistics $u_k(x) = x_k$, the cumulants
$\kappa_{jlm}$ vanish identically, so both maps are
isometries and~\eqref{eq:calvo_oller_isometry}
holds. The explicit verification appears in
Subsection~\ref{subsec:example_gaussian}. 
\end{remark}

\subsection{Geodesic Properties of the Embedded
Submanifold}
\label{subsec:geodesic_properties}

\begin{definition}[Totally Geodesic Submanifold]
\label{def:totally_geodesic}
A submanifold $\mathcal{L} \subset \mathrm{SPD}(m)$
is {totally geodesic} if for any two points
$S_0, S_1 \in \mathcal{L}$, the unique AIRM geodesic
connecting them lies entirely within $\mathcal{L}$.
\end{definition}

\begin{theorem}[Non-Geodesic Property of $\mathcal{M}$]
\label{thm:non_geodesic}
The embedded submanifold $\mathcal{M} =
\mathcal{F}(\mathcal{N}) \subset \mathrm{SPD}(d+1)$
is not totally geodesic in
$(\mathrm{SPD}(d+1), \mathrm{AIRM})$.
\end{theorem}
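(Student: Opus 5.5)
The approach is to use the bottom-right entry of the embedding matrix, which the paper singled out precisely to detect when AIRM geodesics leave $\mathcal{M}$. By Theorem~\ref{thm:spd_embedding}, every $S \in \mathcal{M}$ satisfies $[S]_{d+1,d+1} = 1$. It therefore suffices to exhibit two points $S_0 = S(\theta_0)$ and $S_1 = S(\theta_1)$ in $\mathcal{M}$ such that the AIRM geodesic~\eqref{eq:spd_geodesic} between them has $[\gamma(t)]_{d+1,d+1} \neq 1$ for some $t \in (0,1)$.

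I will argue at the infinitesimal level, which avoids having to evaluate matrix powers explicitly. Let $f(t) := \mathbf{e}_{d+1}^\top \gamma(t)\,\mathbf{e}_{d+1}$. The congruence form of~\eqref{eq:spd_geodesic} gives $\gamma(t) = S_0^{1/2}\exp(tL)S_0^{1/2}$ with $L = \log(S_0^{-1/2}S_1S_0^{-1/2})$. Setting $w := S_0^{1/2}\mathbf{e}_{d+1}$, we get $f(t) = w^\top \exp(tL)\, w$. This is a strictly convex function of $t$ unless $Lw = 0$, because $f''(t) = \|\exp(tL/2)Lw\|^2$. Since $f(0) = f(1) = 1$, strict convexity forces $f(t) < 1$ for all $t \in (0,1)$. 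The geodesic then leaves $\mathcal{M}$, which proves the theorem.

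The remaining task is to rule out $Lw = 0$ for a suitable pair. The condition $Lw = 0$ means $S_0^{-1/2}S_1S_0^{-1/2}w = w$, which is equivalent to $S_1\mathbf{e}_{d+1} = S_0\mathbf{e}_{d+1}$. By the block structure~\eqref{eq:block_structure}, this says exactly that $\eta(\theta_1) = \eta(\theta_0)$. Injectivity of $\nabla\psi$ (Theorem~\ref{thm:embedding_diffeomorphism}(i)) turns this into $\theta_1 = \theta_0$. Hence any two distinct points of $\mathcal{M}$ work, and they exist because $\Theta$ is a nonempty open subset of $\mathbb{R}^d$ with $d \ge 1$.

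The main obstacle is the convexity step. One must check that $f''(t) = w^\top L\exp(tL)L\,w$ is strictly positive whenever $Lw \ne 0$, which follows because $L$ is symmetric and $\exp(tL)$ is positive definite. One must also check that $S_0^{1/2}$ correctly converts the condition into the statement about the last column. Both are short linear-algebra checks, so I would write them carefully rather than resort to an explicit example such as the Gamma family.
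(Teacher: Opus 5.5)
Your proof is correct and follows essentially the same route as the paper. Both arguments track the $(d+1,d+1)$ entry $f(t)=w^\top M^t w$ along the AIRM geodesic and use convexity with $f(0)=f(1)=1$; both then reduce the degenerate case to $Mw=w$, i.e.\ $S_1\mathbf{e}_{d+1}=S_0\mathbf{e}_{d+1}$, which forces $\eta(\theta_0)=\eta(\theta_1)$ and hence $\theta_0=\theta_1$ by injectivity of $\nabla\psi$. The only difference is cosmetic: you get strict convexity directly from $f''(t)=\|\exp(tL/2)Lw\|^2$, whereas the paper diagonalizes $M$ and argues that $f(t_0)=1$ at an interior point would force $f\equiv 1$ and hence $f''\equiv 0$.
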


\begin{proof}
Every $S\in\mathcal{M}$ satisfies $[S]_{d+1,d+1} = 1$
by~\eqref{eq:block_structure}. For any two distinct
$S_0, S_1\in\mathcal{M}$, let
$M = S_0^{-1/2}S_1S_0^{-1/2} \in \mathrm{SPD}(d+1)$
and $\gamma(t) = S_0^{1/2}M^tS_0^{1/2}$ be the unique
AIRM geodesic from $S_0$ to $S_1$. Define
\begin{equation}
\label{eq:f_convex}
f(t) := [\gamma(t)]_{d+1,d+1}
= e_{d+1}^\top S_0^{1/2}M^t S_0^{1/2}e_{d+1}
= v^\top M^t v,
\end{equation}
where $v := S_0^{1/2}e_{d+1}$. Let $M = Q\Lambda Q^\top$
be the eigendecomposition with
$\Lambda = \mathrm{diag}(\lambda_1,\ldots,\lambda_{d+1})$,
$\lambda_i > 0$, and set $w := Q^\top v$. Then
\begin{equation}
f(t) = w^\top \Lambda^t w = \sum_{i=1}^{d+1} w_i^2\lambda_i^t.
\end{equation}
Since $S_0,S_1\in\mathcal{M}$, $f(0) = 1$ and $f(1) = 1$.
Each $t\mapsto\lambda_i^t$ is convex, hence $f$ is convex,
giving $f(t) \leq 1$ for all $t\in[0,1]$.

Suppose $f(t_0) = 1$ for some $t_0\in(0,1)$. Convexity
with $f(0)=f(1)=1$ forces $f\equiv 1$ on $[0,1]$.
Differentiating twice
\[
f''(t) = \sum_{i=1}^{d+1} w_i^2(\log\lambda_i)^2\lambda_i^t
= 0 \quad \forall\, t\in[0,1].
\]
Since $\lambda_i^t > 0$, dividing each term by $\lambda_i^t$
gives $w_i^2(\log\lambda_i)^2 = 0$ for every $i$, hence
\begin{equation}
\label{eq:either_or}
w_i = 0 \quad \text{or} \quad \lambda_i = 1
\quad \forall\, i.
\end{equation}
In either case $\lambda_i^t w_i = w_i$, so
$\Lambda^t w = w$ for all $t$, giving
\[
M^t v = Q\Lambda^t w = Qw = QQ^\top v = v
\quad \forall\, t,
\]
where $QQ^\top = I$ since $Q$ is orthogonal.
In particular $Mv = v$, which gives
\[
S_0^{-1/2}S_1S_0^{-1/2}\cdot S_0^{1/2}e_{d+1}
= S_0^{1/2}e_{d+1}
\implies S_1e_{d+1} = S_0e_{d+1}.
\]
By~\eqref{eq:block_structure}, $Se_{d+1} =
(\eta(\theta)^\top,1)^\top$ for any $S(\theta)\in\mathcal{M}$,
so $S_1e_{d+1} = S_0e_{d+1}$ implies
$\eta(\theta_1) = \eta(\theta_0)$.
Since $\nabla^2\psi = I(\theta)\succ 0$
by~\eqref{eq:fisher_matrix}, $\nabla\psi$ is strictly
monotone and hence injective implies
$\theta_0 = \theta_1$ and $S_0 = S_1$,
contradicting distinctness. Therefore
$f(t) < 1$ for all $t\in(0,1)$,
and $\gamma$ exits $\mathcal{M}$.
\end{proof}

\begin{remark}
\label{rem:geodesic_forward}
The non-geodesic property of $\mathcal{M}$ is the
geometric reason why the ambient AIRM distance
$d_{\mathrm{AIRM}}$ provides a strict lower bound
for $d_{\mathcal{F}}$. The geodesic in
$\mathrm{SPD}(d+1)$ passes through the ambient
space rather than staying on $\mathcal{M}$, making
it strictly shorter than any curve confined to
$\mathcal{M}$. The fixed-mean slices
\begin{equation}
\label{eq:fixed_mean_slice}
\mathcal{S}_{\eta_0} := \left\{
\begin{pmatrix} A & \eta_0 \\ \eta_0^\top & 1
\end{pmatrix} \in \mathrm{SPD}(r+1)
: A \in \mathrm{SPD}(r)
\right\}, \quad \eta_0\in\mathbb{R}^r,
\end{equation}
are totally geodesic submanifolds of
$\mathrm{SPD}(r+1)$; this is established in
Section~\ref{sec:partial_embedding}. For the
Gaussian partial embedding, the AIRM geodesic
between two fixed-mean points stays within
$\mathcal{M}_J$, so the lower bound is achieved
with equality on these slices; see
Remark~\ref{rem:calvo_equality}.
\end{remark}

\subsection{Distance Bounds via the Embedding}
\label{subsec:distance_bounds}

The Riemannian distance on $\mathcal{N}$ induced by the
pullback metric $g^{\mathcal{F}}$ 
(Definition~\ref{def:pullback_metric}) is
\begin{equation}
d_{\mathcal{F}}(p_{\theta_1}, p_{\theta_2}) :=
\inf_{\gamma} \int_0^1
\sqrt{g^{\mathcal{F}}_{\gamma(t)}
(\dot{\gamma}(t), \dot{\gamma}(t))}\,dt,
\end{equation}
where the infimum is over smooth paths
$\gamma\colon[0,1]\to\Theta$ with
$\gamma(0)=\theta_1$, $\gamma(1)=\theta_2$. By~\eqref{eq:isometry_condition}, this equals
the intrinsic distance on $\mathcal{M}$ along
paths confined to the submanifold.

\begin{theorem}[Main Distance Inequality]
\label{thm:distance_bound}
For any $p_{\theta_1}, p_{\theta_2} \in \mathcal{N}$,
\begin{equation}
\label{eq:distance_inequality}
d_{\mathrm{AIRM}}(\mathcal{F}(\theta_1),
\mathcal{F}(\theta_2))
\leq d_{\mathcal{F}}(\theta_1, \theta_2),
\end{equation}
with equality if and only if the AIRM geodesic
connecting $\mathcal{F}(\theta_1)$ to
$\mathcal{F}(\theta_2)$ lies entirely within
$\mathcal{M}$.
\end{theorem}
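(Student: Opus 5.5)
The inequality will come from comparing infima over nested classes of curves. By the isometry property~\eqref{eq:isometry_condition}, for any piecewise smooth path $\gamma\colon[0,1]\to\Theta$ from $\theta_1$ to $\theta_2$, the $g^{\mathcal{F}}$-length of $\gamma$ equals the AIRM length of the image curve $\mathcal{F}\circ\gamma$ in $\mathrm{SPD}(d+1)$. This holds because $\frac{d}{dt}\mathcal{F}(\gamma(t)) = D\mathcal{F}_{\gamma(t)}(\dot\gamma(t))$, so the integrands agree pointwise. The curve $\mathcal{F}\circ\gamma$ is an admissible competitor in the variational definition of $d_{\mathrm{AIRM}}(\mathcal{F}(\theta_1),\mathcal{F}(\theta_2))$, which is the infimum of AIRM lengths over all piecewise smooth curves in $\mathrm{SPD}(d+1)$ joining the endpoints. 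Hence $d_{\mathrm{AIRM}} \le L_{g^{\mathcal{F}}}(\gamma)$ for every such $\gamma$, and taking the infimum over $\gamma$ gives~\eqref{eq:distance_inequality}.

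For the equality characterization, I will use that $(\mathrm{SPD}(d+1),\mathrm{AIRM})$ is Hadamard (Subsection~\ref{subsec:spd}). Consequently the geodesic~\eqref{eq:spd_geodesic} is the unique length-minimizing curve between its endpoints, up to reparametrization.

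The direction ``geodesic in $\mathcal{M}$ $\Rightarrow$ equality'' goes as follows. Suppose the geodesic $\sigma$ lies in $\mathcal{M}$. Theorem~\ref{thm:embedding_diffeomorphism}(iv) says $\mathcal{F}^{-1} = (\nabla\psi)^{-1}\circ\rho$ is smooth on $\mathcal{M}$, and in fact $\rho$ is the restriction of a linear map on $\mathrm{Sym}(d+1)$. Therefore $\gamma := (\nabla\psi)^{-1}\circ\rho\circ\sigma$ is a smooth path in $\Theta$ with $\mathcal{F}\circ\gamma = \sigma$. Its $g^{\mathcal{F}}$-length is $L_{\mathrm{AIRM}}(\sigma) = d_{\mathrm{AIRM}}$, so $d_{\mathcal{F}} \le d_{\mathrm{AIRM}}$, which combined with the first part gives equality.

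The converse, ``equality $\Rightarrow$ geodesic in $\mathcal{M}$'', is the main obstacle, because $d_{\mathcal{F}}$ is an infimum and need not a priori be attained. I would first try to show that $(\Theta, g^{\mathcal{F}})$ admits a minimizing curve. Failing that, I would argue by compactness. Take a minimizing sequence $\gamma_n$ with $L(\mathcal{F}\circ\gamma_n)\to d_{\mathrm{AIRM}}$. In a Hadamard manifold, strong convexity of the distance function (CAT(0) comparison) forces the curves $\mathcal{F}\circ\gamma_n$, parametrized proportionally to arc length, to converge uniformly to the unique geodesic $\sigma$. The reason is that a curve whose length is within $\epsilon$ of the distance stays within $O(\sqrt{\epsilon})$ of the geodesic.

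It then remains to conclude $\sigma\subset\mathcal{M}$. The limit $\sigma$ lies in the closure of $\mathcal{M}$ inside $\mathrm{SPD}(d+1)$. Here I expect to need an extra assumption or argument that $\mathcal{M}$ is closed in $\mathrm{SPD}(d+1)$, or at least that points of $\sigma$ cannot escape to $\partial\mathcal{M}$. A natural route is to use the bottom-right-entry function $f(t)$ from the proof of Theorem~\ref{thm:non_geodesic}, together with the fact that the last column of the limit determines $\eta$ via $\rho$. If this step cannot be closed in general, I would state the ``only if'' direction under the assumption that $d_{\mathcal{F}}$ is realized by a $g^{\mathcal{F}}$-minimizing curve $\gamma$. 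In that case $\mathcal{F}\circ\gamma$ is a length-minimizing AIRM curve between the endpoints, so by uniqueness of geodesics in the Hadamard manifold it coincides with $\sigma$, which therefore lies in $\mathcal{M}$.
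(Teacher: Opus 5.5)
Your argument for the inequality is the paper's: by \eqref{eq:isometry_condition} each curve $\gamma$ in $\Theta$ yields an admissible curve $\mathcal{F}\circ\gamma$ in $\mathrm{SPD}(d+1)$ of equal length, so the infimum over the larger class is no larger. For the equality clause, the paper gives only a one-line appeal to uniqueness of geodesics in a Hadamard manifold. That appeal tacitly assumes $d_{\mathcal{F}}$ is realized by a curve in $\mathcal{M}$, so the attainment issue you raise is a genuine omission in the paper's argument. Your ``if'' direction, which pulls $\sigma$ back through $(\nabla\psi)^{-1}\circ\rho$ from Theorem~\ref{thm:embedding_diffeomorphism}(iv), is correct and supplies what the paper leaves implicit.

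The step you left open in the converse closes without any closedness of $\mathcal{M}$ and without your fallback hypothesis. What you need is that $\mathcal{M}$ sits inside $H=\{S\in\mathrm{SPD}(d+1): [S]_{d+1,d+1}=1\}$, which is closed in $\mathrm{SPD}(d+1)$.

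\begin{itemize}
\item Suppose $d_{\mathcal{F}}(\theta_1,\theta_2)=d_{\mathrm{AIRM}}(S_1,S_2)$. Your CAT(0) estimate gives uniform convergence of the constant-speed curves $\mathcal{F}\circ\gamma_n$ to $\sigma$.
\item Hence $[\sigma(t)]_{d+1,d+1}=\lim_n [\mathcal{F}(\gamma_n(t))]_{d+1,d+1}=1$ for all $t$.
\item But the proof of Theorem~\ref{thm:non_geodesic} gives $[\sigma(t)]_{d+1,d+1}<1$ on $(0,1)$ whenever $\theta_1\neq\theta_2$.
\item So equality forces $\theta_1=\theta_2$. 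Then the geodesic is constant and lies in $\mathcal{M}$, and the stated ``iff'' holds.
\end{itemize}

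An equivalent route avoids the quantitative estimate. $H$ is a closed embedded hypersurface of a complete manifold, so Hopf--Rinow gives a curve in $H$ realizing its intrinsic distance $d_H$, and $d_{\mathrm{AIRM}}\le d_H\le d_{\mathcal{F}}$. If $d_H=d_{\mathrm{AIRM}}$, that curve would be the AIRM geodesic, which leaves $H$.

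Be aware that this makes the equality case degenerate for the full embedding: the bound is strict for all distinct $\theta_1,\theta_2$, consistent with Remark~\ref{rem:geodesic_forward}. Your concern about $\sigma$ reaching $\overline{\mathcal{M}}\setminus\mathcal{M}$ only becomes real for partial embeddings on fixed-mean slices. There $\sigma$ stays in the hyperplane, so this separation argument is unavailable.
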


\begin{proof}
The AIRM distance between $\mathcal{F}(\theta_1)$
and $\mathcal{F}(\theta_2)$ is the infimum of
path lengths over all smooth paths in
$\mathrm{SPD}(d+1)$
\begin{equation}
d_{\mathrm{AIRM}}(\mathcal{F}(\theta_1),
\mathcal{F}(\theta_2))
= \inf_{\gamma\subset\mathrm{SPD}(d+1)}
\int_0^1\sqrt{\langle\dot{\gamma}(t),
\dot{\gamma}(t)\rangle_{\gamma(t)}^{\mathrm{AIRM}}}
\,dt.
\end{equation}
By~\eqref{eq:isometry_condition}, the pullback
distance $d_{\mathcal{F}}(\theta_1,\theta_2)$
equals the infimum of the same integrand over
paths constrained to $\mathcal{M}$
\begin{equation}
d_{\mathcal{F}}(\theta_1,\theta_2)
= \inf_{\gamma\subset\mathcal{M}}
\int_0^1\sqrt{\langle\dot{\gamma}(t),
\dot{\gamma}(t)\rangle_{\gamma(t)}^{\mathrm{AIRM}}}
\,dt.
\end{equation}
Since $\mathcal{M}\subset\mathrm{SPD}(d+1)$,
every path confined to $\mathcal{M}$ is also
a path in $\mathrm{SPD}(d+1)$, so the
infimum over the larger set is smaller or equal
\begin{equation}
\inf_{\gamma\subset\mathrm{SPD}(d+1)}
\int_0^1\sqrt{\langle\dot{\gamma},
\dot{\gamma}\rangle^{\mathrm{AIRM}}}\,dt
\leq
\inf_{\gamma\subset\mathcal{M}}
\int_0^1\sqrt{\langle\dot{\gamma},
\dot{\gamma}\rangle^{\mathrm{AIRM}}}\,dt,
\end{equation}
establishing~\eqref{eq:distance_inequality}.
Since $(\mathrm{SPD}(d+1),\mathrm{AIRM})$ is a
Hadamard manifold~\cite{r21}, there exists a
unique length-minimizing geodesic between any
two points. Equality holds if and only if this
geodesic is confined to $\mathcal{M}$.
\end{proof}

\begin{remark}
\label{rem:airm_tractable}
The AIRM distance on the left side
of~\eqref{eq:distance_inequality} is computable
in closed form via~\eqref{eq:airm_distance},
while $d_{\mathcal{F}}$ generally requires
solving a geodesic boundary value problem on
$\mathcal{M}$. The inequality thus provides a
tractable lower bound for the intrinsic distance.
Equality characterization on fixed-mean slices
and the connection to the Fisher--Rao distance
are developed in
Sections~\ref{sec:partial_embedding}
and~\ref{sec:complete_framework}.
\end{remark}

\section{Partial Embedding Criteria}
\label{sec:partial_embedding}

The full embedding $\mathcal{F}\colon\mathcal{N}\to\mathrm{SPD}(d+1)$
maps each parameter to a $(d+1)\times(d+1)$ SPD matrix.
This section develops the theory of {partial embeddings},
which use only a subset of the sufficient statistics, yielding
smaller matrices while preserving injectivity under an algebraic
condition on the omitted statistics.
The main result (Theorem~\ref{thm:partial_sufficient}) establishes
that if the omitted statistics can be expressed as polynomials of
degree at most two in the retained statistics, then the partial
embedding remains injective. This criterion recovers the classical
Calvo--Oller embedding for the Gaussian family as the canonical
special case and provides a systematic framework for identifying
when dimension reduction is admissible.

\subsection{Motivation: The Multivariate Gaussian Case}
\label{subsec:partial_motivation}

The multivariate Gaussian family with mean $\mu\in\mathbb{R}^n$
and covariance $\Sigma\in\mathrm{SPD}(n)$ has sufficient
statistics comprising all linear and quadratic terms in
$x = (x_1,\ldots,x_n)^\top$, giving
$d = n + \binom{n+1}{2} = \frac{n(n+3)}{2}$
sufficient statistics and a full embedding matrix of size
$(d+1)\times(d+1)$.

The classical Calvo--Oller embedding retains only the
$n$ linear statistics $u_k(x) = x_k$ and discards
the quadratic statistics $x_ix_j$, producing the
$(n+1)\times(n+1)$ matrix
\begin{equation}
\label{eq:calvo_oller_matrix}
\mathcal{F}_J(\theta) =
\begin{pmatrix}\Sigma + \mu\mu^\top & \mu \\
\mu^\top & 1\end{pmatrix},
\end{equation}
which has the block structure of~\eqref{eq:block_structure}
with $[A_J(\theta)]_{ij} = \mathbb{E}_{p_\theta}[x_ix_j]
= \Sigma_{ij} + \mu_i\mu_j$ and
$[\eta_J(\theta)]_k = \mathbb{E}_{p_\theta}[x_k] = \mu_k$.
This dimension reduction is admissible because each
discarded statistic satisfies
\begin{equation}
\label{eq:gaussian_poly}
x_ix_j = u_i(x)\cdot u_j(x),
\end{equation}
a polynomial of degree exactly two in the retained
statistics. The general criterion for when such a
reduction preserves injectivity is established in
Theorem~\ref{thm:partial_sufficient}.

\subsection{The Partial Embedding Map}
\label{subsec:partial_definition}

We now formalize the notion of using a subset of
sufficient statistics for embedding.

\begin{definition}[Partial Embedding]
\label{def:partial_embedding}
Let $J \subseteq \{1, \ldots, d\}$ be a non-empty
subset with $|J| = r$, written as
$J = \{j_1, \ldots, j_r\}$ with $j_1 < \cdots < j_r$.
The {partial augmented vector} corresponding
to $J$ is
\begin{equation}
V_J(x) = (u_{j_1}(x), \ldots, u_{j_r}(x), 1)^\top
\in \mathbb{R}^{r+1},
\end{equation}
and the {partial embedding} is the map
\begin{equation}
\mathcal{F}_J \colon \mathcal{N} \to \mathrm{SPD}(r+1),
\qquad
\mathcal{F}_J(p_\theta) :=
\mathbb{E}_{p_\theta}\!\left[V_J(x)\,V_J(x)^\top
\right] =: S_J(\theta).
\end{equation}
When $J = \{1, \ldots, d\}$, the partial embedding
reduces to the full embedding $\mathcal{F}$ of
Theorem~\ref{thm:spd_embedding}.
\end{definition}

The partial embedding
matrix inherits the block structure
of~\eqref{eq:block_structure}
\begin{equation}
\label{eq:partial_block_structure}
S_J(\theta) = \begin{pmatrix}
A_J(\theta) & \eta_J(\theta) \\
\eta_J(\theta)^\top & 1
\end{pmatrix},
\end{equation}
where $A_J(\theta)$ is the $r\times r$ matrix with
entries
\begin{equation}
\label{eq:AJ_entries}
[A_J(\theta)]_{kl} :=
\mathbb{E}_{p_\theta}[u_k\,u_l],
\qquad k, l \in J,
\end{equation}
and $\eta_J(\theta)\in\mathbb{R}^r$ is the vector
with entries
\begin{equation}
\label{eq:etaJ_entries}
[\eta_J(\theta)]_k :=
\mathbb{E}_{p_\theta}[u_k],
\qquad k \in J.
\end{equation}
As in~\eqref{eq:A_I_decomposition}, the Schur
complement gives the partial Fisher information
matrix
\begin{equation}
\label{eq:partial_schur}
A_J(\theta) - \eta_J(\theta)\,\eta_J(\theta)^\top
= I_J(\theta),
\qquad
[I_J(\theta)]_{kl} =
\mathrm{Cov}_{p_\theta}(u_k, u_l),
\quad k, l \in J.
\end{equation}

\subsection{The Polynomial Criterion for Injectivity}
\label{subsec:polynomial_criterion}

\begin{theorem}[Polynomial Criterion for Partial Embedding Injectivity]
\label{thm:partial_sufficient}
Let $J \subsetneq \{1, \ldots, d\}$ with $|J| = r < d$. Suppose that
for each $j \notin J$, the statistic $u_j$ can be expressed as a
polynomial of degree at most two in the retained statistics
$(u_k)_{k \in J}$,
\begin{equation}
\label{eq:poly_representation}
u_j(x) = \sum_{k,l \in J} \alpha_{kl}^{(j)}\,u_k(x)\,u_l(x)
+ \sum_{k \in J} \beta_k^{(j)}\,u_k(x) + \gamma^{(j)}
\qquad \mu\text{-a.e.},
\end{equation}
for constants $\alpha_{kl}^{(j)}, \beta_k^{(j)}, \gamma^{(j)} \in
\mathbb{R}$. Then $\mathcal{F}_J \colon \mathcal{N} \to \mathrm{SPD}(r+1)$
is injective.
\end{theorem}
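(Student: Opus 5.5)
The plan is to show that $S_J(\theta)$ determines the full mean vector $\eta(\theta)\in\mathbb{R}^d$. Injectivity then follows exactly as in part~(i) of Theorem~\ref{thm:embedding_diffeomorphism}: since $\nabla\psi$ is injective on $\Theta$ (strict convexity of $\psi$, with $\nabla^2\psi = I(\theta)\succ 0$ by~\eqref{eq:fisher_matrix}), equality of full mean vectors forces equality of natural parameters.

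So suppose $S_J(\theta_1) = S_J(\theta_2)$.

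\textbf{Step 1 (retained coordinates).} By the block structure~\eqref{eq:partial_block_structure}, the last column gives $\eta_J(\theta_1) = \eta_J(\theta_2)$. The top-left block gives $A_J(\theta_1) = A_J(\theta_2)$, that is,
\[
\mathbb{E}_{p_{\theta_1}}[u_k u_l] = \mathbb{E}_{p_{\theta_2}}[u_k u_l], \qquad k,l\in J.
\]
Thus every first and second moment of the retained statistics agrees under $p_{\theta_1}$ and $p_{\theta_2}$.

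\textbf{Step 2 (omitted coordinates).} For $j\notin J$, take expectations of~\eqref{eq:poly_representation} under $p_{\theta_i}$. This is legitimate because each $p_{\theta_i}$ is equivalent to $\mu$, so the $\mu$-a.e.\ identity holds $p_{\theta_i}$-a.s. It is also integrable, because products $u_ku_l$ of elements of $B_p$ have finite expectation under members of $\mathcal{E}_p$ (the same integrability already used to define $A(\theta)$ in Theorem~\ref{thm:spd_embedding}). By linearity,
\[
\eta_j(\theta_i) = \sum_{k,l\in J}\alpha^{(j)}_{kl}[A_J(\theta_i)]_{kl} + \sum_{k\in J}\beta^{(j)}_k[\eta_J(\theta_i)]_k + \gamma^{(j)}.
\]
The right-hand side is an affine function of the entries of $S_J(\theta_i)$, with coefficients independent of $\theta$. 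It therefore takes the same value for $i=1,2$, and so $\eta_j(\theta_1) = \eta_j(\theta_2)$ for every $j\notin J$.

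\textbf{Step 3 (conclude).} Combining the two steps gives $\nabla\psi(\theta_1) = \eta(\theta_1) = \eta(\theta_2) = \nabla\psi(\theta_2)$, using~\eqref{eq:mean_params}. Strict monotonicity of $\nabla\psi$ then yields $\theta_1 = \theta_2$. Hence $\mathcal{F}_J$ is injective. As a side check, $S_J(\theta)\in\mathrm{SPD}(r+1)$ follows from the positive-definiteness argument of Theorem~\ref{thm:spd_embedding} applied to the linearly independent subfamily $(u_k)_{k\in J}$, so $\mathcal{F}_J$ is well defined.

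The argument is essentially bookkeeping. The only point needing care is Step~2's justification that expectation commutes with the a.e.\ polynomial identity. I expect this to be immediate from mutual absolute continuity and the Orlicz-class integrability of $u_ku_l$ (finiteness of all moments of Cram\'er-class variables under $p_\theta\in\mathcal{E}_p$). Conceptually, the key observation is that degree at most two is exactly what keeps the omitted means inside the linear span of the first and second moments stored in $S_J$. Higher-degree relations would require third moments, which $S_J$ does not record.
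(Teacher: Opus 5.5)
Your proposal is correct and follows the paper's proof essentially step for step. Equality of $S_J$ gives equality of the retained first and second moments, taking expectations in the $\mu$-a.e.\ polynomial identity recovers the omitted mean parameters, and injectivity of $\nabla\psi$ then forces $\theta_1=\theta_2$; your added remarks on the integrability of $u_k u_l$ and on $S_J(\theta)$ being SPD are sound details that the paper leaves implicit.
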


\begin{proof}
Suppose $S_J(\theta_1) = S_J(\theta_2)$ for some
$\theta_1, \theta_2 \in \Theta$. By the block
structure~\eqref{eq:partial_block_structure},
\begin{align}
\mathbb{E}_{p_{\theta_1}}[u_k] &=
\mathbb{E}_{p_{\theta_2}}[u_k]
\qquad \text{for all } k \in J,
\label{eq:first_moments_equal}\\
\mathbb{E}_{p_{\theta_1}}[u_k u_l] &=
\mathbb{E}_{p_{\theta_2}}[u_k u_l]
\qquad \text{for all } k, l \in J.
\label{eq:second_moments_equal}
\end{align}
Fix $j \notin J$. Since~\eqref{eq:poly_representation}
holds $\mu$-a.e.\ and $p_\theta$ is equivalent to $\mu$
(Definition~\ref{def:density_space}), taking expectations
under $p_\theta$ and using linearity gives
\begin{equation}
\label{eq:eta_j_recovery}
\mathbb{E}_{p_\theta}[u_j]
= \sum_{k,l \in J}
\alpha_{kl}^{(j)}\,\mathbb{E}_{p_\theta}[u_k u_l]
+ \sum_{k \in J} \beta_k^{(j)}\,\mathbb{E}_{p_\theta}[u_k]
+ \gamma^{(j)}.
\end{equation}
The right-hand side depends on $\theta$ only through
$\{[A_J(\theta)]_{kl}\}_{k,l\in J}$ and
$\{[\eta_J(\theta)]_k\}_{k\in J}$, which are equal
for $\theta_1$ and $\theta_2$
by~\eqref{eq:first_moments_equal}
and~\eqref{eq:second_moments_equal}. Therefore
$\mathbb{E}_{p_{\theta_1}}[u_j] =
\mathbb{E}_{p_{\theta_2}}[u_j]$ for every $j\notin J$.

Combined with~\eqref{eq:first_moments_equal},
$\eta(\theta_1) = \eta(\theta_2)$. Since
$\nabla^2\psi(\theta) = I(\theta) \succ 0$
by~\eqref{eq:fisher_matrix}, $\eta = \nabla\psi$
is injective, so $\theta_1 = \theta_2$.
\end{proof}

\begin{remark}
\label{rem:degree_two_threshold}
The degree bound of two is required because $S_J(\theta)$ encodes
moments of the retained statistics only up to degree two: degree-one
moments $\mathbb{E}_{p_\theta}[u_k]$ appear in $\eta_J(\theta)$
and degree-two moments $\mathbb{E}_{p_\theta}[u_k u_l]$ appear in
$A_J(\theta)$. The recovery identity~\eqref{eq:eta_j_recovery}
therefore relies strictly on this second-order information.
\end{remark}

\subsection{Application to Multivariate Gaussians}
\label{subsec:gaussian_partial}
For the multivariate gaussian partial embedding introduced in
Subsection~\ref{subsec:partial_motivation}, each
omitted quadratic statistic satisfies
$u_{ij}(x) = u_i(x)\cdot u_j(x)$,
a polynomial of degree exactly two in the retained
linear statistics. Theorem~\ref{thm:partial_sufficient}
therefore confirms that $\mathcal{F}_J\colon\mathcal{N}
\to\mathrm{SPD}(n+1)$ is injective, and the
embedding matrix takes the form
\begin{equation*}
\mathcal{F}_J(\theta) =
\begin{pmatrix}\Sigma + \mu\mu^\top & \mu \\
\mu^\top & 1\end{pmatrix},
\end{equation*}
recovering the Calvo--Oller embedding~\cite{r25}
(equation~\eqref{eq:calvo_oller_matrix}).

\begin{remark}[Beyond Gaussians]
\label{rem:beyond_gaussians}
The polynomial criterion applies to any exponential
family where the omitted statistics are expressible
as polynomials of degree at most two in the retained
ones. For families where no such representation exists, the full embedding $\mathcal{F}$ of
Theorem~\ref{thm:spd_embedding} is required; the Gamma family, where $u_1(x) = \log x$ and $u_2(x) = x$ are algebraically independent, is one such case
(Subsection~\ref{subsec:example_gamma}).
\end{remark}

\subsection{Totally Geodesic Fixed-Mean Slices}
\label{subsec:totally_geodesic_slices}

For the full embedding $\mathcal{F}$, fixing $\eta(\theta)=\eta_0$
determines $\theta$ uniquely since $\nabla\psi$ is injective,
so $\mathcal{M}\cap\mathcal{S}_{\eta_0}$ is a single point.
For the partial embedding $\mathcal{F}_J$, fixing
$\eta_J(\theta)=\eta_0$ constrains only $r < d$ mean
parameters, leaving the remaining $d-r$ free to vary. The intersection
$\mathcal{M}_J\cap\mathcal{S}_{\eta_0}$ therefore has positive
dimension, and the following result becomes non-trivial.

\begin{theorem}[Totally Geodesic Fixed-Mean Slices]
\label{thm:totally_geodesic}
For any fixed $\eta_0\in\mathbb{R}^r$, the set
\begin{equation}
\label{eq:fixed_mean_slice_J}
\mathcal{S}_{\eta_0} := \left\{
\begin{pmatrix} A & \eta_0 \\ \eta_0^\top & 1
\end{pmatrix} \in \mathrm{SPD}(r+1)
: A \in \mathrm{SPD}(r)
\right\}
\end{equation}
is a totally geodesic submanifold of
$(\mathrm{SPD}(r+1), d_{\mathrm{AIRM}})$.
\end{theorem}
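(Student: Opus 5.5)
The plan is to reduce the slice $\mathcal{S}_{\eta_0}$ to a block-diagonal slice by a single congruence. Then I use the congruence invariance~\eqref{eq:airm_congruence} of the AIRM together with the explicit geodesic formula~\eqref{eq:spd_geodesic}.

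First, I will describe the slice precisely. By the Schur complement (as in Remark~\ref{rem:block_interpretation}), a matrix $\begin{pmatrix} A & \eta_0 \\ \eta_0^\top & 1\end{pmatrix}$ lies in $\mathrm{SPD}(r+1)$ if and only if $\Sigma := A - \eta_0\eta_0^\top \in \mathrm{SPD}(r)$. So I read the membership condition in~\eqref{eq:fixed_mean_slice_J} through this requirement. Next, set
\[
L := \begin{pmatrix} I_r & \eta_0 \\ 0 & 1 \end{pmatrix}, \qquad
\mathcal{D} := \left\{ \begin{pmatrix} \Sigma & 0 \\ 0 & 1\end{pmatrix} : \Sigma \in \mathrm{SPD}(r)\right\}.
\]
Direct multiplication gives $L\,\mathrm{diag}(\Sigma,1)\,L^\top = \begin{pmatrix} \Sigma + \eta_0\eta_0^\top & \eta_0 \\ \eta_0^\top & 1\end{pmatrix}$. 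Hence the map $\Phi(S) := LSL^\top$ is a bijection of $\mathcal{D}$ onto $\mathcal{S}_{\eta_0}$, with inverse $S\mapsto L^{-1}SL^{-\top}$.

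Second, $\Phi$ is an isometry of $(\mathrm{SPD}(r+1),\mathrm{AIRM})$ by~\eqref{eq:airm_congruence}, so it maps geodesics to geodesics. Alternatively, one can check directly from~\eqref{eq:spd_geodesic} that $\Phi(\gamma_{S_0,S_1}(t))$ is the geodesic from $\Phi(S_0)$ to $\Phi(S_1)$, using the standard identity $(LS_0L^\top)\#_t(LS_1L^\top) = L(S_0\#_t S_1)L^\top$ for the weighted geometric mean. Since $\mathrm{SPD}(r+1)$ is Hadamard, the geodesic between two points is unique. It therefore suffices to show that $\mathcal{D}$ is totally geodesic.

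Third, for $S_i = \mathrm{diag}(\Sigma_i,1)$ all matrix functions in~\eqref{eq:spd_geodesic} act blockwise. Thus $S_0^{\pm 1/2} = \mathrm{diag}(\Sigma_0^{\pm1/2},1)$, $S_0^{-1/2}S_1S_0^{-1/2} = \mathrm{diag}(\Sigma_0^{-1/2}\Sigma_1\Sigma_0^{-1/2},1)$, and its $t$-th power is $\mathrm{diag}((\Sigma_0^{-1/2}\Sigma_1\Sigma_0^{-1/2})^t,1)$. Hence $\gamma(t) = \mathrm{diag}(\Sigma(t),1)$, where $\Sigma(t)$ is the $\mathrm{SPD}(r)$ geodesic, so $\gamma(t)\in\mathcal{D}$. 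Mapping back by $\Phi$ shows that the geodesic between any two points of $\mathcal{S}_{\eta_0}$ equals $\begin{pmatrix}\Sigma(t)+\eta_0\eta_0^\top & \eta_0\\ \eta_0^\top & 1\end{pmatrix}$, which stays in $\mathcal{S}_{\eta_0}$.

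The computations are routine. The only step needing care is the justification that congruence carries the unique AIRM geodesic to the unique AIRM geodesic. I would handle it by the isometry argument plus uniqueness on a Hadamard manifold, rather than by manipulating fractional powers of non-commuting matrices. A secondary point is to state clearly that the slice's defining condition is the Schur complement condition, so that $\Phi$ is indeed onto $\mathcal{S}_{\eta_0}$.
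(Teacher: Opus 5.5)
Your proposal is correct and follows essentially the same route as the paper. The paper applies the congruence $T=\begin{pmatrix} I_r & -\eta_0\\ 0 & 1\end{pmatrix}$, the inverse of your $L$, to send $\mathcal{S}_{\eta_0}$ onto the block-diagonal set $\mathcal{D}$, checks blockwise that AIRM geodesics between points of $\mathcal{D}$ stay in $\mathcal{D}$, and transports this back via the congruence invariance of the AIRM; your explicit Schur-complement description of the slice, which guarantees that the congruence is onto $\mathcal{S}_{\eta_0}$, makes precise a point the paper leaves implicit.
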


\begin{proof}
Define the invertible matrix
\[
T = \begin{pmatrix} I_r & -\eta_0 \\ 0 & 1 \end{pmatrix},
\qquad
T^{-1} = \begin{pmatrix} I_r & \eta_0 \\ 0 & 1 \end{pmatrix}.
\]
For any $S = \begin{pmatrix} A & \eta_0 \\ \eta_0^\top & 1
\end{pmatrix} \in \mathcal{S}_{\eta_0}$, direct computation gives
\[
TST^\top =
\begin{pmatrix} A - \eta_0\eta_0^\top & 0 \\ 0 & 1
\end{pmatrix}.
\]
The map $S \mapsto TST^\top$ sends $\mathcal{S}_{\eta_0}$
bijectively onto the block-diagonal set
\[
\mathcal{D} := \left\{
\begin{pmatrix} P & 0 \\ 0 & 1 \end{pmatrix}
: P \in \mathrm{SPD}(r) \right\}.
\]
For any two points $\tilde{S}_0, \tilde{S}_1 \in \mathcal{D}$ with $\tilde{S}_i =
\begin{pmatrix} P_i & 0 \\ 0 & 1 \end{pmatrix}$ for $i=0,1$,
\[
\bigl(\tilde{S}_0^{-1/2}\tilde{S}_1\tilde{S}_0^{-1/2}\bigr)^t
= \begin{pmatrix}
(P_0^{-1/2}P_1 P_0^{-1/2})^t & 0 \\ 0 & 1
\end{pmatrix}, \quad t\in[0,1].
\]
The upper-left block is the AIRM geodesic between $P_0$ and
$P_1$ in $\mathrm{SPD}(r)$, which stays in $\mathrm{SPD}(r)$
for all $t\in[0,1]$. Hence the geodesic between $\tilde{S}_0$ and
$\tilde{S}_1$ stays in $\mathcal{D}$, so $\mathcal{D}$ is
totally geodesic in $\mathrm{SPD}(r+1)$. Since the AIRM is invariant under congruence
transformations, the map $S \mapsto T S T^\top$
is an isometry of $\mathrm{SPD}(r+1)$. Isometries
preserve totally geodesic submanifolds, and
$\mathcal{S}_{\eta_0} =
T^{-1} \mathcal{D} (T^{-1})^\top$, so
$\mathcal{S}_{\eta_0}$ is totally geodesic in
$\mathrm{SPD}(r+1)$.
\end{proof}

\begin{corollary}[Equality of distances on fixed-mean slices]
\label{cor:fixed_mean_equality}
For any $\theta_1, \theta_2$ with $\eta_J(\theta_1)=\eta_J(\theta_2)=\eta_0$, 
the AIRM geodesic between $\mathcal{F}_J(\theta_1)$ and $\mathcal{F}_J(\theta_2)$ 
lies entirely within $\mathcal{S}_{\eta_0}$ by Theorem~\ref{thm:totally_geodesic}. 
If this geodesic also lies within $\mathcal{M}_J$, then
\[
d_{\mathrm{AIRM}}\!\bigl(\mathcal{F}_J(\theta_1), \mathcal{F}_J(\theta_2)\bigr)
= d_{\mathcal{F}_J}(\theta_1, \theta_2).
\]
\end{corollary}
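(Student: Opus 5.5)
The plan is to combine Theorem~\ref{thm:totally_geodesic} with the equality characterization in Theorem~\ref{thm:distance_bound}, applied to the partial embedding $\mathcal{F}_J$ in place of $\mathcal{F}$.

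First, record that $\mathcal{F}_J(\theta_1)$ and $\mathcal{F}_J(\theta_2)$ both belong to $\mathcal{S}_{\eta_0}$. By the block structure~\eqref{eq:partial_block_structure}, each is of the form $\begin{pmatrix} A_J(\theta_i) & \eta_0 \\ \eta_0^\top & 1\end{pmatrix}$. Here $A_J(\theta_i)=I_J(\theta_i)+\eta_0\eta_0^\top$ is SPD, because $I_J(\theta_i)\succ 0$ by~\eqref{eq:partial_schur} and the linear independence of the $u_k$. Theorem~\ref{thm:totally_geodesic} then says that the unique AIRM geodesic $\gamma(t)=S_0^{1/2}(S_0^{-1/2}S_1S_0^{-1/2})^tS_0^{1/2}$ between $S_i=\mathcal{F}_J(\theta_i)$ lies in $\mathcal{S}_{\eta_0}$. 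This gives the first assertion directly.

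For the equality, assume in addition that $\gamma\subset\mathcal{M}_J$. The inequality $d_{\mathrm{AIRM}}\le d_{\mathcal{F}_J}$ is proved exactly as in Theorem~\ref{thm:distance_bound}: paths in $\mathcal{M}_J$ are paths in $\mathrm{SPD}(r+1)$. That argument requires $\mathcal{F}_J$ to be an immersion, so that $g^{\mathcal{F}_J}$ is a genuine metric and $d_{\mathcal{F}_J}$ is the intrinsic length distance on $\mathcal{M}_J$. The last-column argument of Theorem~\ref{thm:embedding_diffeomorphism}(iii) does not carry over verbatim. I would either assume the hypotheses of Theorem~\ref{thm:partial_sufficient} or note that the corollary is conditional on $\mathcal{M}_J$ being an embedded submanifold on which $\mathcal{F}_J$ is a diffeomorphism.

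For the reverse inequality, pull $\gamma$ back to $\Theta$ by setting $c(t):=\mathcal{F}_J^{-1}(\gamma(t))$. The length of $c$ in $g^{\mathcal{F}_J}$ equals the AIRM length of $\gamma$ by the isometry property~\eqref{eq:isometry_condition}. That length is $d_{\mathrm{AIRM}}(S_0,S_1)$ because $\gamma$ is minimizing. Therefore $d_{\mathcal{F}_J}(\theta_1,\theta_2)\le d_{\mathrm{AIRM}}$, and the two are equal.

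The main obstacle is ensuring that $c$ is a smooth, admissible path in $\Theta$. This needs $\mathcal{F}_J^{-1}$ to be smooth on $\mathcal{M}_J$, i.e.\ $\mathcal{F}_J$ an injective immersion that is a homeomorphism onto its image. I would try to prove the immersion property by differentiating the recovery identity~\eqref{eq:eta_j_recovery}. If $D\mathcal{F}_J(v)=0$, then $d\eta_J$ and $dA_J$ vanish, so every $d\eta_j=0$, hence $I(\theta)v=0$ and $v=0$. The inverse is then the smooth map $(\nabla\psi)^{-1}\circ\tilde\rho$, where $\tilde\rho$ reconstructs $\eta$ linearly from the entries of $S_J$. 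With this in hand the pullback path is smooth and the argument closes.
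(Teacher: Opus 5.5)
Your proposal is correct and follows the route the paper takes implicitly: Theorem~\ref{thm:totally_geodesic} places the geodesic in $\mathcal{S}_{\eta_0}$, and the equality case of Theorem~\ref{thm:distance_bound}, transplanted from $\mathcal{F}$ to $\mathcal{F}_J$, gives $d_{\mathrm{AIRM}}=d_{\mathcal{F}_J}$ once the geodesic lies in $\mathcal{M}_J$. The paper gives no separate proof and never justifies the lifting step or the immersion and smooth-inverse properties of $\mathcal{F}_J$; your explicit pullback $c=\mathcal{F}_J^{-1}\circ\gamma$, built from the affine reconstruction of $\eta$ under the hypotheses of Theorem~\ref{thm:partial_sufficient}, fills that gap rather than changing the approach.
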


\begin{remark}[Calvo--Oller equality case]
\label{rem:calvo_equality}
For the Gaussian partial embedding with $J=\{1,\ldots,r\}$ and fixed mean $\mu=\mu_0$, 
the remaining free parameter is $\Sigma\in\mathrm{SPD}(r)$, and the AIRM geodesic 
between two fixed-mean Gaussian embeddings is a curve of the form
\[
\gamma(t) = \begin{pmatrix}\Sigma(t)+\mu_0\mu_0^\top & \mu_0 \\
\mu_0^\top & 1\end{pmatrix}, \quad t \in [0,1],
\]
where $\Sigma(t)$ is the AIRM geodesic in $\mathrm{SPD}(r)$ between $\Sigma_1$ and $\Sigma_2$. 
Since $\Sigma(t)\in\mathrm{SPD}(r)$ for all $t$, each $\gamma(t)$ equals $\mathcal{F}_J(\mu_0, \Sigma(t))$ 
and therefore lies in $\mathcal{M}_J$. The condition of Corollary~\ref{cor:fixed_mean_equality} 
is therefore satisfied, recovering the classical result of Calvo and Oller~\cite{r25} 
that $d_{\mathrm{AIRM}}=d_{\mathcal{F}_J}$ for fixed-mean Gaussians.
\end{remark}

\section{The Complete Distance Framework on
\texorpdfstring{$\mathcal{E}_p$}{Ep}}
\label{sec:complete_framework}

The two-stage construction introduced in
Section~\ref{sec:framework} is now complete.
The $m$-projection $\pi^{(m)}\colon\mathcal{E}_p
\to\mathcal{N}$ and the embedding
$\mathcal{F}\colon\mathcal{N}\to\mathcal{M}\subset
\mathrm{SPD}(d+1)$ together yield a pseudo-metric
$\tilde{d}$ on $\mathcal{E}_p$ with a closed-form
lower bound via the AIRM distance.

\subsection{The Two-Stage Construction}
\label{subsec:two_stage}

The framework operates through the composition
\begin{equation}
\label{eq:map_composition}
\mathcal{E}_p
\xrightarrow{\;\pi^{(m)}\;}
\mathcal{N}
\xrightarrow{\;\mathcal{F}\;}
\mathcal{M} \subset \mathrm{SPD}(d+1),
\end{equation}
where $\mathcal{F}$ acts on the natural parameter
coordinate $\theta$ of $p_\theta\in\mathcal{N}$.
The embedding $\mathcal{F}$ is a diffeomorphism
onto $\mathcal{M} = \mathcal{F}(\mathcal{N})$
(Theorem~\ref{thm:embedding_diffeomorphism}),
so pulling back the AIRM via $\mathcal{F}$ yields
the pullback metric $g^{\mathcal{F}}$ on $\mathcal{N}$
(Definition~\ref{def:pullback_metric}) and the
associated Riemannian distance $d_{\mathcal{F}}$.
By~\eqref{eq:isometry_condition}, $\mathcal{F}$ is
an isometry from $(\mathcal{N}, g^{\mathcal{F}})$ to
$(\mathcal{M}, \mathrm{AIRM}|_{\mathcal{M}})$,
so $d_{\mathcal{F}}(\theta_1, \theta_2)$ equals
the intrinsic Riemannian distance on $\mathcal{M}$
between $\mathcal{F}(\theta_1)$ and
$\mathcal{F}(\theta_2)$. Pulling $d_{\mathcal{F}}$
back through $\pi^{(m)}$ via
Definition~\ref{def:projected_distance} gives
the pseudo-metric $\tilde{d}$ on $\mathcal{E}_p$
(Proposition~\ref{prop:pseudo_metric}):
\begin{equation}
\label{eq:projected_distance_Ep}
\tilde{d}(q_1, q_2)
:= d_{\mathcal{F}}\!\bigl(\pi^{(m)}(q_1),\,
\pi^{(m)}(q_2)\bigr),
\qquad q_1, q_2 \in \mathcal{E}_p.
\end{equation}

\subsection{The Complete Distance Inequality}
\label{subsec:complete_inequality}

Combining Theorem~\ref{thm:distance_bound}
with~\eqref{eq:projected_distance_Ep} gives the
main result of this section.

\begin{theorem}[Complete Distance Inequality]
\label{thm:complete_inequality}
For any $q_1, q_2 \in \mathcal{E}_p$, let $\theta_i$
denote the natural parameters of the $m$-projections
$p_{\theta_i} = \pi^{(m)}(q_i)$ and let
$S_i = \mathcal{F}(\theta_i)$. Then
\begin{equation}
\label{eq:complete_chain}
d_{\mathrm{AIRM}}(S_1, S_2)
\leq \tilde{d}(q_1, q_2).
\end{equation}
Equality holds if and only if the AIRM geodesic
connecting $S_1$ and $S_2$ in $\mathrm{SPD}(d+1)$
lies entirely within $\mathcal{M} =
\mathcal{F}(\mathcal{N})$.
\end{theorem}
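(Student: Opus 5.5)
The plan is to reduce Theorem~\ref{thm:complete_inequality} directly to Theorem~\ref{thm:distance_bound} by unwinding the definition of $\tilde{d}$.

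\textbf{Step 1: well-definedness.} For $q_1, q_2 \in \mathcal{E}_p$ I will assume, as the statement implicitly does, that the $m$-projections exist. By Theorem~\ref{thm:projection_existence} this holds when $\eta^{q_i} \in \mathcal{H}_\eta$. By Theorem~\ref{thm:moment_matching} each projection is then unique and given by $\theta_i = (\nabla\psi)^{-1}(\eta^{q_i})$. Hence $S_i = \mathcal{F}(\theta_i)$ is a well-defined point of $\mathcal{M}$, and by Theorem~\ref{thm:spd_embedding} it lies in $\mathrm{SPD}(d+1)$.

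\textbf{Step 2: the inequality.} By \eqref{eq:projected_distance_Ep}, with $\varrho = d_{\mathcal{F}}$ as in Definition~\ref{def:projected_distance},
\[
\tilde{d}(q_1,q_2) = d_{\mathcal{F}}(p_{\theta_1}, p_{\theta_2}) = d_{\mathcal{F}}(\theta_1,\theta_2).
\]
This uses that $\mathcal{F}$ acts through the chart $p_\theta \mapsto \theta$ of Subsection~\ref{subsubsec:parametric_submanifolds}. Applying \eqref{eq:distance_inequality} of Theorem~\ref{thm:distance_bound} to the pair $\theta_1, \theta_2$ gives
\[
d_{\mathrm{AIRM}}(S_1,S_2) \le d_{\mathcal{F}}(\theta_1,\theta_2) = \tilde{d}(q_1,q_2),
\]
which is \eqref{eq:complete_chain}.

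\textbf{Step 3: the equality case.} Since the two quantities compared in Theorem~\ref{thm:complete_inequality} are literally the two sides of \eqref{eq:distance_inequality} evaluated at $\theta_1, \theta_2$, the ``if and only if'' transfers verbatim from Theorem~\ref{thm:distance_bound}.

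This step is the only place needing care, because the equality characterization in Theorem~\ref{thm:distance_bound} relies on $\mathcal{F}$ being an isometry onto $(\mathcal{M}, \mathrm{AIRM}|_{\mathcal{M}})$, by \eqref{eq:isometry_condition}. It also relies on the uniqueness of AIRM geodesics in the Hadamard manifold $\mathrm{SPD}(d+1)$.
\begin{itemize}
\item \emph{Forward direction.} If the geodesic lies in $\mathcal{M}$, it is an admissible curve for $d_{\mathcal{F}}$, so $d_{\mathcal{F}} \le d_{\mathrm{AIRM}}$.
\item \emph{Converse.} If $d_{\mathcal{F}} = d_{\mathrm{AIRM}}$, I would take a minimizing sequence of curves in $\mathcal{M}$ whose lengths tend to $d_{\mathrm{AIRM}}$. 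Uniqueness of the minimizing geodesic then forces these curves to converge to the geodesic. Closedness would be needed to place the limit in $\mathcal{M}$, and closedness is exactly the delicate point.
\end{itemize}
I will not reprove this. I will simply cite Theorem~\ref{thm:distance_bound} as stated, so the whole argument is a direct corollary.

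\textbf{A degenerate case worth noting.} If $\eta^{q_1} = \eta^{q_2}$, then $S_1 = S_2$ and both sides vanish, consistent with Proposition~\ref{prop:pseudo_metric}(iv).
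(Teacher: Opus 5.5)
Your proposal is correct and is essentially the paper's proof: you rewrite $\tilde{d}(q_1,q_2)=d_{\mathcal{F}}(\theta_1,\theta_2)$ by definition, then read off both the inequality and the equality characterization from Theorem~\ref{thm:distance_bound}. The closedness issue you flag in the converse is in fact harmless here: a minimizing sequence in $\mathcal{M}$ forces the geodesic into the closure of $\mathcal{M}$, which lies in the relatively closed set $\{S : [S]_{d+1,d+1}=1\}$, and by Theorem~\ref{thm:non_geodesic} the AIRM geodesic leaves that set whenever $S_1\neq S_2$, so equality holds exactly when $S_1=S_2$.
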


\begin{proof}
By~\eqref{eq:projected_distance_Ep},
\[
\tilde{d}(q_1, q_2)
= d_{\mathcal{F}}\!\bigl(\pi^{(m)}(q_1),
\pi^{(m)}(q_2)\bigr)
= d_{\mathcal{F}}(\theta_1, \theta_2).
\]
Theorem~\ref{thm:distance_bound} then gives
$d_{\mathrm{AIRM}}(S_1, S_2) \leq
d_{\mathcal{F}}(\theta_1, \theta_2)
= \tilde{d}(q_1, q_2)$, with equality if and
only if the unique AIRM geodesic between $S_1$
and $S_2$ lies entirely within $\mathcal{M}$.
\end{proof}

\section{Examples}
\label{sec:examples}

This section applies the framework of
Section~\ref{sec:complete_framework} through three
worked examples. Section~\ref{subsec:example_gaussian} treats
the univariate Gaussian family, where the polynomial criterion
of Theorem~\ref{thm:partial_sufficient} permits a partial
embedding that
recovers the Calvo--Oller embedding~\cite{r25} as a special
case, and the pullback metric coincides with the Fisher--Rao
metric by Lemma~\ref{lem:kappa3_cancellation}. 
Section~\ref{subsec:example_gamma} treats the Gamma family,
where the sufficient statistics are algebraically independent
so the full embedding is required, and non-vanishing third cumulants produce a
strict discrepancy between the pullback metric and the
Fisher--Rao metric. Section~\ref{subsec:hypothesis_testing}
applies the pseudo-metric $\tilde{d}$ to construct an
affine-invariant two-sample test, using the AIRM distance
between empirical SPD embedding matrices as the test
statistic, and compares its power against MMD and
Hotelling's $T^2$ via permutation on two distinct examples.

\subsection{Univariate Gaussian}
\label{subsec:example_gaussian}

Fix $p_0 = \mathcal{G}(0,1)$ as the reference density. The family
$\mathcal{G} = \{\mathcal{G}(\mu,\sigma^2) : \mu\in\mathbb{R},\,\sigma^2>0\}$
embeds into $\mathcal{E}_{p_0}$ via
\begin{equation}
\label{eq:gauss_exp}
p_{(\mu,\sigma^2)}
= \exp\!\bigl(\theta_1\,u_1 + \theta_2\,u_2 - \psi(\theta)\bigr)\,p_0,
\end{equation}
where the sufficient statistics, centred at $p_0$ so that
$\mathbb{E}_{p_0}[u_j]=0$ and $u_j\in B_{p_0}$, are
\begin{equation}
\label{eq:gauss_stats}
u_1(x) = x, \qquad u_2(x) = x^2 - 1,
\end{equation}
and the natural parameters are
\begin{equation}
\label{eq:gauss_params}
\theta_1 = \frac{\mu}{\sigma^2}, \qquad
\theta_2 = \frac{1}{2} - \frac{1}{2\sigma^2},
\qquad \Theta = \mathbb{R}\times\bigl(-\infty,\tfrac{1}{2}\bigr).
\end{equation}

\smallskip\noindent\textbf{Partial embedding via the polynomial criterion.}
Taking $J=\{1\}$ and retaining only $u_1$, the omitted statistic satisfies
\begin{equation}
\label{eq:gauss_poly}
u_2(x) = u_1(x)^2 - 1,
\end{equation}
which has the form~\eqref{eq:poly_representation} with
$\alpha_{11}^{(2)}=1$, $\beta_1^{(2)}=0$, $\gamma^{(2)}=-1$.
Theorem~\ref{thm:partial_sufficient} therefore guarantees that
$\mathcal{F}_J\colon\mathcal{N}\to\mathrm{SPD}(2)$ is injective.
Computing the blocks from~\eqref{eq:etaJ_entries}--\eqref{eq:AJ_entries},
\begin{equation}
\label{eq:gauss_blocks}
\eta_J(\theta) = \mathbb{E}_{p_\theta}[u_1] = \mu, \qquad
A_J(\theta) = \mathbb{E}_{p_\theta}[u_1^2] = \sigma^2 + \mu^2,
\end{equation}
so the embedding~\eqref{eq:partial_block_structure} gives
\begin{equation}
\label{eq:gauss_embed}
S_J(\mu,\sigma^2)
= \begin{pmatrix} \sigma^2+\mu^2 & \mu \\ \mu & 1 \end{pmatrix}
\in \mathrm{SPD}(2).
\end{equation}
The Schur complement~\eqref{eq:partial_schur} recovers the partial Fisher
information: $I_J = A_J - \eta_J^2 = \sigma^2$.

\smallskip\noindent\textbf{Pullback metric and Calvo--Oller recovery.}
For $X\sim\mathcal{G}(\mu,\sigma^2)$, the only third joint cumulant of
$u_1$ is
\begin{equation}
\label{eq:gauss_kappa}
\kappa_{111}(\theta)
= \mathbb{E}_{p_\theta}\!\bigl[(X-\mu)^3\bigr] = 0,
\end{equation}
since all odd central moments of a Gaussian vanish.
Lemma~\ref{lem:kappa3_cancellation} then gives $ds^2_{\mathcal{M}_J} =
ds^2_{\mathrm{Fisher}}$, which for the $(\mu,\sigma)$ parametrization is
\begin{equation}
\label{eq:gauss_pullback}
ds^2_{\mathcal{M}_J}
= \frac{(d\mu)^2}{\sigma^2} + \frac{2\,(d\sigma)^2}{\sigma^2}.
\end{equation}
Hence $g^{\mathcal{F}_J} = I_J(\theta)$, and by Remark~\ref{rem:calvo_isometry}
the embedding~\eqref{eq:gauss_embed} is the Calvo--Oller
embedding~\cite{r25}.

\subsection{Gamma Distribution}
\label{subsec:example_gamma}

Fix $p_0 = \mathrm{Gamma}(1,1)$ as the reference density,
$p_0(x) = e^{-x}$. The family
$\mathcal{K} = \{\mathrm{Gamma}(\alpha,\beta) : \alpha,\beta > 0\}$
has density $p(x;\alpha,\beta) =
\frac{\beta^\alpha}{\Gamma(\alpha)}x^{\alpha-1}e^{-\beta x}$,
giving the ratio
\begin{equation}
\frac{p(x;\alpha,\beta)}{p_0(x)}
= x^{\alpha-1}\,e^{(1-\beta)x}\,
\frac{\beta^\alpha}{\Gamma(\alpha)},
\end{equation}
so that
\begin{equation}
\label{eq:gamma_exp}
p(x;\alpha,\beta)
= \exp\!\bigl[(\alpha-1)\log x + (1-\beta)\,x
- \psi(\theta)\bigr]\,p_0(x),
\end{equation}
where $\psi(\theta) = \log\Gamma(\alpha) - \alpha\log\beta$
is the log-partition function. This gives the natural
parameters
\begin{equation}
\label{eq:gamma_params}
\theta_1 = \alpha - 1, \qquad \theta_2 = 1-\beta,
\qquad \Theta = (-1,\infty)\times(-\infty,1),
\end{equation}
with sufficient statistics $\log x$ and $x$ respectively.
Centering at $p_0$ via $\mathbb{E}_{p_0}[\log x] = 
\Psi^{(0)}(1) = -\gamma_E$ and $\mathbb{E}_{p_0}[x] = 1$
gives the centered sufficient statistics
\begin{equation}
\label{eq:gamma_stats}
u_1(x) = \log x + \gamma_E, \qquad u_2(x) = x - 1,
\end{equation}
where $\Psi^{(k)}(\alpha) := \frac{d^{k+1}}{d\alpha^{k+1}}
\log\Gamma(\alpha)$ is the $k$-th polygamma function and
$\gamma_E = -\Psi^{(0)}(1)$ is the Euler--Mascheroni
constant~\cite{r61}, ensuring $\mathbb{E}_{p_0}[u_j] = 0$
and $u_j \in B_{p_0}$. In natural parameters
this gives $\alpha = \theta_1+1$ and $\beta = 1-\theta_2$,
\begin{equation}
\label{eq:gamma_psi}
\psi(\theta) = \log\Gamma(\theta_1+1)
- (\theta_1+1)\log(1-\theta_2)
+ \gamma_E\theta_1 - \theta_2.
\end{equation}

\smallskip\noindent\textbf{Full embedding required.}
The statistics $u_1 = \log x + \gamma_E$ and $u_2 = x - 1$
are algebraically independent since each is a transcendental
function of the other, so condition~\eqref{eq:poly_representation}
of Theorem~\ref{thm:partial_sufficient} cannot hold for any
$J \subsetneq \{1,2\}$. By Remark~\ref{rem:beyond_gaussians},
the full embedding $\mathcal{F}\colon\mathcal{N}\to\mathrm{SPD}(3)$
is required.

The mean parameters and Fisher information matrix, obtained
as first and second partial derivatives of $\psi$ with respect
to $\theta_1, \theta_2$, are
\begin{equation}
\label{eq:gamma_eta}
\eta_1 = \Psi^{(0)}(\alpha) - \log\beta + \gamma_E,
\qquad
\eta_2 = \frac{\alpha}{\beta} - 1,
\end{equation}
\begin{equation}
\label{eq:gamma_fisher}
I(\theta) =
\begin{pmatrix}
\Psi^{(1)}(\alpha) & 1/\beta \\[4pt]
1/\beta & \alpha/\beta^2
\end{pmatrix}.
\end{equation}
Using $A = I + \eta\eta^\top$ from~\eqref{eq:A_I_decomposition},
the embedding matrix is
\begin{equation}
\label{eq:gamma_S}
S(\theta) =
\begin{pmatrix}
\Psi^{(1)}(\alpha) + \eta_1^2 
    & 1/\beta + \eta_1\eta_2 & \eta_1 \\[4pt]
1/\beta + \eta_1\eta_2 
    & \alpha/\beta^2 + \eta_2^2 & \eta_2 \\[4pt]
\eta_1 & \eta_2 & 1
\end{pmatrix} \in \mathrm{SPD}(3).
\end{equation}

\smallskip\noindent\textbf{Metric discrepancy via 
non-vanishing cumulants.}
The third cumulants, obtained as third partial derivatives
of $\psi(\theta)$, are
\begin{equation}
\label{eq:gamma_kappa}
\kappa_{111} = \Psi^{(2)}(\alpha), \qquad
\kappa_{122} = \frac{1}{\beta^2}, \qquad
\kappa_{222} = \frac{2\alpha}{\beta^3},
\end{equation}
with all other independent third cumulants equal to zero.
Since $\kappa_{jlm} \not\equiv 0$, the hypothesis of
Lemma~\ref{lem:kappa3_cancellation} fails and
$ds^2_{\mathcal{M}} \neq ds^2_{\mathrm{Fisher}}$ in general.

\subsection{Geometric Two-Sample Testing}
\label{subsec:hypothesis_testing}

Samples $X=(x_1,\dots,x_m)$ and $Y=(y_1,\dots,y_n)$ are drawn
independently from unknown $q_1, q_2 \in \mathcal{E}_p$. Since both
distributions are non-parametric, the Fisher--Rao distance is
intractable. The framework of Section~\ref{sec:complete_framework}
makes the problem computable. By Theorem~\ref{thm:moment_matching},
each $q_i$ is mapped to its $m$-projection
$p_{\theta_i} = \pi^{(m)}(q_i) \in \mathcal{N}$ via moment matching,
and by Theorem~\ref{thm:spd_embedding} each $p_{\theta_i}$ is
embedded into $\mathrm{SPD}(d+1)$ via $S_i = \mathcal{F}(\theta_i)$.
The pseudo-metric $\tilde{d}(q_1,q_2) = d_{\mathcal{F}}(\theta_1,\theta_2)$
of Definition~\ref{def:projected_distance} is then bounded below in
closed form by Theorem~\ref{thm:complete_inequality}.

The induced null hypothesis is
\begin{equation}
\label{eq:null_hypothesis}
H_0^{\mathcal{N}}\colon \eta^{q_1} = \eta^{q_2}
\qquad\text{against}\qquad
H_1^{\mathcal{N}}\colon \eta^{q_1} \neq \eta^{q_2},
\end{equation}
where $\eta^{q_i} =
(\mathbb{E}_{q_i}[u_1],\dots,\mathbb{E}_{q_i}[u_d])^\top$.
By Proposition~\ref{prop:many_to_one}, $H_0^{\mathcal{N}}$ is
strictly weaker than full distributional equality, since
distributions differing only beyond the moments encoded by
$\mathcal{N}$ satisfy $\tilde{d}(q_1,q_2)=0$.

\smallskip\noindent\textbf{Test statistics.}
Taking $\mathcal{G}$ to be the Gaussian family with $J=\{1\}$
(Section~\ref{subsec:example_gaussian}), the $m$-projections are
identified by their sample moments. The embedding matrices
\begin{equation}
\hat{S}_X = \begin{pmatrix}\hat{A}_X & \hat{\mu}_X \\
\hat{\mu}_X & 1\end{pmatrix}, \qquad
\hat{S}_Y = \begin{pmatrix}\hat{A}_Y & \hat{\mu}_Y \\
\hat{\mu}_Y & 1\end{pmatrix}, \qquad
\hat{\mu} = \frac{1}{m}\sum_{i=1}^m x_i, \quad
\hat{A} = \frac{1}{m}\sum_{i=1}^m x_i^2,
\end{equation}
are computed directly from the samples. The observed AIRM test
statistic is
\begin{equation}
\label{eq:airm_stat}
\hat{T}_{\mathrm{AIRM}}
= d_{\mathrm{AIRM}}(\hat{S}_X, \hat{S}_Y)
= \frac{1}{\sqrt{2}}\bigl\|
  \log(\hat{S}_X^{-1/2}\hat{S}_Y\hat{S}_X^{-1/2})
  \bigr\|_F,
\end{equation}
computable in closed form via~\eqref{eq:airm_distance} and
affine-invariant by~\eqref{eq:airm_congruence}. The two baseline
observed statistics are Hotelling's $T^2$,
\begin{equation}
\label{eq:hotelling}
\hat{T}_H = \frac{mn}{m+n}(\bar{X}-\bar{Y})^\top
S_p^{-1}(\bar{X}-\bar{Y}),
\end{equation}
where $S_p$ is the pooled sample covariance, and the unbiased MMD
with Gaussian kernel $k(x,y)=\exp(-\|x-y\|^2/2\sigma^2)$ and
median bandwidth $\sigma$ (recomputed from the pooled sample at
each permutation step),
\begin{equation}
\label{eq:mmd}
\hat{T}_{\mathrm{MMD}}^2
= \frac{1}{m(m-1)}\sum_{i\neq j}k(x_i,x_j)
- \frac{2}{mn}\sum_{i,j}k(x_i,y_j)
+ \frac{1}{n(n-1)}\sum_{i\neq j}k(y_i,y_j).
\end{equation}
For each method, the observed statistic $\hat{T}_{\mathrm{obs}}$
is calibrated against a null distribution of $B$ permuted
statistics $\hat{T}_1^*,\dots,\hat{T}_B^*$, each computed from
a random relabelling of $Z = X \cup Y$ into groups of sizes $m$
and $n$. The empirical p-value
\begin{equation}
\hat{p} = \frac{1+\#\{b:\hat{T}_b^*\geq\hat{T}_{\mathrm{obs}}\}}{B+1}
\end{equation}
gives exact level $\alpha$ in finite samples under exchangeability.
The complete procedure is given in Algorithm~\ref{alg:perm_test}
and the structural differences between the three methods are
illustrated in Figure~\ref{fig:test_pipeline}.

\begin{algorithm}[H]
\caption{Permutation Two-Sample Test (any statistic $\hat{T}$)}
\label{alg:perm_test}
\begin{algorithmic}[1]
\Require Samples $X=(x_1,\dots,x_m)$, $Y=(y_1,\dots,y_n)$;
         statistic $\hat{T} \in \{\hat{T}_{\mathrm{AIRM}},\,
         \hat{T}_H,\, \hat{T}_{\mathrm{MMD}}\}$;
         permutation count $B$; significance level $\alpha$.
\Ensure Observed statistic $\hat{T}_{\mathrm{obs}}$,
        empirical p-value $\hat{p}$.
\State Compute $\hat{T}_{\mathrm{obs}}$ from $X$ and $Y$.
\State Pool $Z = X \cup Y$.
\For{$b = 1,\dots,B$}
  \State Draw a random permutation $\pi$ of $\{1,\dots,m+n\}$;
         set $X^* = (z_{\pi(1)},\dots,z_{\pi(m)})$,
         $Y^* = (z_{\pi(m+1)},\dots,z_{\pi(m+n)})$.
  \State Compute $\hat{T}_b^*$ from $X^*$ and $Y^*$.
\EndFor
\State $\hat{p} =
  \bigl(1+\#\{b:\hat{T}_b^*\geq\hat{T}_{\mathrm{obs}}\}
  \bigr)/(B+1)$.
\State Reject $H_0^{\mathcal{N}}$ at level $\alpha$ iff
       $\hat{p} < \alpha$.
\end{algorithmic}
\end{algorithm}

\begin{figure}[!htbp]
\centering
\resizebox{\linewidth}{!}{
\begin{tikzpicture}[
  every node/.style={font=\small},
  box/.style={draw, rounded corners=5pt, minimum width=3.8cm,
              minimum height=2.2cm, align=center, thick},
  sbox/.style={draw, rounded corners=5pt, minimum width=8.5cm,
               minimum height=1.1cm, align=center,
               fill=gray!5, thick},
  arr/.style={->, thick, >=stealth, gray!60}
]
\node[sbox] (inp) at (0, 5.2)
  {\textbf{Samples}
   $X=(x_1,\dots,x_m)$ and $Y=(y_1,\dots,y_n)$\\
   from unknown $q_1, q_2\in\mathcal{E}_p$};

\node[box, fill=blue!5, draw=blue!60] (airm) at (-5.2, 2.6) {
  \textbf{AIRM (this paper)}\\[4pt]
  $m$-projection (Thm.~\ref{thm:moment_matching})\\
  $\pi^{(m)}(q_i)\to p_{\theta_i}\in\mathcal{N}$\\[2pt]
  SPD embedding (Thm.~\ref{thm:spd_embedding})\\
  $\hat{S}_i = \mathcal{F}(\hat\theta_i)$\\[4pt]
  $\hat{T}_{\mathrm{obs}} =
    d_{\mathrm{AIRM}}(\hat{S}_X,\hat{S}_Y)$
};
\node[box, fill=gray!5] (hot) at (0, 2.6) {
  \textbf{Hotelling's $T^2$}\\[4pt]
  $\hat{T}_{\mathrm{obs}} = \tfrac{mn}{m+n}
    (\bar{X}-\bar{Y})^\top S_p^{-1}(\bar{X}-\bar{Y})$\\[4pt]
  {\color{red!70!black}\textit{means only}}
};
\node[box, fill=gray!5] (mmd) at (5.5, 2.6) {
  \textbf{MMD}\\[4pt]
  Choose kernel $k$, bandwidth $\sigma$\\[4pt]
  $\hat{T}_{\mathrm{obs}}^2 = \mathbb{E}[k(X,X')]$\\
  $-2\mathbb{E}[k(X,Y)]$\\
  $+\mathbb{E}[k(Y,Y')]$\\[2pt]
  {\color{red!70!black}\textit{bandwidth required}}
};
\draw[arr] (inp.west) -- +(-0.6,0) |- (airm.north);
\draw[arr] (inp.south) -- (hot.north);
\draw[arr] (inp.east) -- +(0.6,0)  |- (mmd.north);
\node[sbox, minimum width=11cm] (perm) at (0, 0) {
  \textbf{Permutation calibration --- identical for all three
  ($\alpha = 0.05$, $B$ permutations)}\\
  Pool $Z=X\cup Y$.\ \
  Shuffle $B$ times; compute $\hat{T}_b^*$ from relabelled data.\ \
  $\hat{p} =
    (1+\#\{b:\hat{T}_b^*\geq\hat{T}_{\mathrm{obs}}\})/(B+1)$.\ \
  Reject $H_0^{\mathcal{N}}$ if $\hat{p}<\alpha$.
};
\draw[arr] (airm.south) -- ++(0,-0.4) -| (perm.north west);
\draw[arr] (hot.south)  -- (perm.north);
\draw[arr] (mmd.south)  -- ++(0,-0.4) -| (perm.north east);
\end{tikzpicture}}
\caption{Pipeline for the three two-sample tests. Each method
computes $\hat{T}_{\mathrm{obs}}$ from the original samples and
$\hat{T}_b^*$ from reshuffled data; the p-value and rejection
rule at $\alpha=0.05$ are identical for all three. AIRM uses
the two-stage construction of
Section~\ref{sec:complete_framework}; Hotelling's $T^2$ compares
means only; MMD requires bandwidth selection.}
\label{fig:test_pipeline}
\end{figure}

\smallskip\noindent\textbf{Example~1: Non-parametric distributions.}
To demonstrate the proposed framework, samples are drawn from
\begin{equation}
q_1 = \mathrm{Laplace}(0,1), \qquad
q_2 = \mathrm{Logistic}\!\left(0.5,\,\sqrt{4.5}/\pi\right),
\end{equation}
with $\mathbb{E}_{q_1}[x]=0$, $\mathrm{Var}_{q_1}(x)=2$,
$\mathbb{E}_{q_2}[x]=0.5$, $\mathrm{Var}_{q_2}(x)=1.5$.
Both are treated as unknown; the framework uses only the observed
samples $X=(x_1,\dots,x_m)$ and $Y=(y_1,\dots,y_n)$, with
$q_1,q_2\in\mathcal{E}_p$ taken as a modelling assumption on the
reference $p=\mathcal{G}(0,1)$. Taking $\mathcal{G}$ to be
the Gaussian family, Theorem~\ref{thm:moment_matching} reduces
the $m$-projection to estimating mean and raw second moment
from the samples, giving
$\hat{\eta}^{q_i} = (\hat{\mu}_i,\hat{A}_i)^\top$.
At $m=n=200$,
\begin{equation}
\hat{\mu}_X = -0.0737,\quad \hat{A}_X = 1.7758,\qquad
\hat{\mu}_Y = \phantom{-}0.4781,\quad \hat{A}_Y = 1.7159,
\end{equation}
yielding $\hat{\eta}^{q_1} = (-0.0737,\;1.7758)^\top \neq
(0.4781,\;1.7159)^\top = \hat{\eta}^{q_2}$
and SPD embedding matrices
\begin{equation}
\hat{S}_X =
\begin{pmatrix}1.7758 & -0.0737 \\ -0.0737 & 1\end{pmatrix},
\qquad
\hat{S}_Y =
\begin{pmatrix}1.7159 & \phantom{-}0.4781 \\
               0.4781 & 1\end{pmatrix}.
\end{equation}
The observed statistic is
\begin{equation}
\hat{T}_{\mathrm{obs}}
= d_{\mathrm{AIRM}}(\hat{S}_X,\hat{S}_Y)
= \frac{1}{\sqrt{2}}
  \bigl\|\log(\hat{S}_X^{-1/2}\hat{S}_Y
  \hat{S}_X^{-1/2})\bigr\|_F
\approx 0.447.
\end{equation}
With $B=2000$ permutations, the 95th percentile of
$\{\hat{T}_b^*\}_{b=1}^{2000}$ is $0.266$. Since
$\hat{T}_{\mathrm{obs}} = 0.447 > 0.266$,
\begin{equation}
\hat{p} =
\frac{1+\#\{b:\hat{T}_b^*\geq 0.447\}}{2001}
\approx 0.0005 < 0.05 = \alpha,
\end{equation}
and $H_0^{\mathcal{N}}$ is rejected at level $\alpha=0.05$.

\smallskip\noindent\textbf{Example~2: Power under variance shift.}
Both $q_1$ and $q_2$ are $\mathrm{Logistic}(0,\cdot)$ with equal
means, but $\mathrm{Var}(q_1)=1$ fixed and $\mathrm{Var}(q_2)=r$
varying. The scale parameters are
\begin{equation}
s_1 = \frac{\sqrt{3}}{\pi} \approx 0.5513,
\qquad
s_2 = \frac{\sqrt{3r}}{\pi},
\end{equation}
so that $\pi^{(m)}(q_1)=\mathcal{G}(0,1)$ and
$\pi^{(m)}(q_2)=\mathcal{G}(0,r)$. Since means are equal,
$\eta^{q_1}$ and $\eta^{q_2}$ differ only in their second
component, so $H_0^{\mathcal{N}}$ reduces to
$\mathrm{Var}(q_1)=\mathrm{Var}(q_2)$. Hotelling's $T^2$
depends only on $\bar{X}-\bar{Y}\approx 0$ and therefore has
no power against this alternative by construction.

For each value of $r$, the following experiment is repeated
$200$ independent times via Algorithm~\ref{alg:perm_test}. In
each trial, fresh samples of size $m=n=100$ are drawn from
$q_1=\mathrm{Logistic}(0,s_1)$ and
$q_2=\mathrm{Logistic}(0,s_2)$ respectively. The sample mean
$\hat{\mu}$ and raw second moment $\hat{A}$ are computed from
each sample, and the SPD embedding matrices
\begin{equation}
\hat{S}_X = \begin{pmatrix}\hat{A}_X & \hat{\mu}_X \\
\hat{\mu}_X & 1\end{pmatrix},
\qquad
\hat{S}_Y = \begin{pmatrix}\hat{A}_Y & \hat{\mu}_Y \\
\hat{\mu}_Y & 1\end{pmatrix}
\end{equation}
are formed directly from these sample moments. The observed
statistic
\begin{equation}
\hat{T}_{\mathrm{obs}}
= d_{\mathrm{AIRM}}(\hat{S}_X, \hat{S}_Y)
= \frac{1}{\sqrt{2}}\bigl\|
  \log(\hat{S}_X^{-1/2}\hat{S}_Y\hat{S}_X^{-1/2})
  \bigr\|_F
\end{equation}
is computed from these matrices and grows monotonically in $r$
as the variance of $q_2$ increases. All three statistics
$\hat{T}_{\mathrm{AIRM}}$, $\hat{T}_H$, and
$\hat{T}_{\mathrm{MMD}}$ are then calibrated against their
own null distribution of $B=200$ permuted statistics at level
$\alpha=0.05$, and the rejection decision is recorded. After
all $200$ trials, the empirical rejection rate is the fraction
of trials in which $H_0^{\mathcal{N}}$ was rejected.

\begin{table}[!htbp]
\centering
\caption{Empirical rejection rates at $\alpha=0.05$ under pure
variance shift ($q_1=\mathrm{Logistic}(0,s_1)$,
$q_2=\mathrm{Logistic}(0,s_2)$, mean $0$,
$\mathrm{Var}(q_1)=1$ vs.\ $\mathrm{Var}(q_2)=r$,
$s_1=\sqrt{3}/\pi$, $s_2=\sqrt{3r}/\pi$);
$m=n=100$, $200$ trials, $B=200$ permutations.}
\label{tab:power_var}
\renewcommand{\arraystretch}{1.25}
\begin{tabular}{lccc}
\toprule
Variance ratio $r$ & \textbf{AIRM} & MMD & Hotelling $T^2$ \\
\midrule
$1.0$ (size) & $0.07$ & $0.07$ & $0.07$ \\
$1.5$        & $\mathbf{0.31}$ & $0.20$ & $0.07$ \\
$2.0$        & $\mathbf{0.73}$ & $0.54$ & $0.05$ \\
$3.0$        & $\mathbf{0.99}$ & $0.93$ & $0.03$ \\
\bottomrule
\end{tabular}
\end{table}

\noindent AIRM achieves the highest power at every alternative,
with the advantage most pronounced at moderate effect sizes
($r=2$, $0.73$ vs.\ $0.54$ for MMD). The SPD embedding
captures the variance shift directly as a geometric distance
between the two embedding matrices, which explains the power
advantage over MMD. At $r=3$, both AIRM and MMD reach
near-perfect power ($0.99$ and $0.93$ respectively).
Hotelling's $T^2$ remains at the nominal level $\alpha=0.05$
throughout, as expected under a pure variance shift with equal
means; the small fluctuations between $0.03$ and $0.07$
across rows arise from running the experiment on different
samples each trial.

\smallskip\noindent\textbf{Limitation.}
By Proposition~\ref{prop:many_to_one}, $\tilde{d}$ is blind to
distributional features beyond the moments encoded by $\mathcal{N}$.
Both $q_1=\mathrm{Laplace}(0,1/\sqrt{2})$ and
$q_2=\mathrm{Logistic}(0,\sqrt{3}/\pi)$ lie in $\mathcal{E}_{p_0}$
and satisfy
\begin{equation}
\mathbb{E}_{q_1}[x] = \mathbb{E}_{q_2}[x] = 0,
\qquad
\mathrm{Var}_{q_1}(x) = \mathrm{Var}_{q_2}(x) = 1,
\end{equation}
so their $m$-projections onto the Gaussian family coincide,
$\pi^{(m)}(q_1)=\pi^{(m)}(q_2)=\mathcal{G}(0,1)$,
and consequently $\tilde{d}(q_1,q_2)=0$.
The test is therefore unable to distinguish $q_1$ from $q_2$,
since $\tilde{d}$ assigns them zero distance.
Adding higher-order sufficient statistics to $\mathcal{N}$
incorporates moment information beyond mean and variance,
grows the embedding to a larger SPD manifold, and restores
power against such alternatives with no other change to
Algorithm~\ref{alg:perm_test}.

\section*{Acknowledgments}
Amit Vishwakarma is thankful to the Indian Institute of Space Science and Technology, Department of Space, Government of India for the award of the doctoral research fellowship.

\section*{Funding}
No funding was received for this work.

\section*{Conflict of Interests}
The authors declare no conflict of interest.

\end{document}